\documentclass[a4paper]{amsart}
\oddsidemargin 0mm
\evensidemargin 0mm
\topmargin 10mm
\textwidth 160mm
\textheight 230mm
\tolerance=9999
\usepackage[latin1]{inputenc}
\usepackage{amssymb}
\usepackage{amsmath}
\usepackage{mathrsfs}
\usepackage{stmaryrd}
\usepackage{eufrak}
\usepackage{amsthm}
\usepackage{amsfonts}
\usepackage{textcomp}
\usepackage{graphicx}
\usepackage[pdftex]{color}
\usepackage{paralist}
\usepackage[shortlabels]{enumitem}
\usepackage{hyperref}
\usepackage{comment}
\usepackage{mathtools}
\usepackage[arrow, matrix, curve]{xy}
\usepackage{tikz}
\usetikzlibrary{shapes,arrows}
\usetikzlibrary{decorations.shapes}
\usetikzlibrary{decorations.markings}
\usetikzlibrary{decorations.pathreplacing}

\newtheorem*{corollary*}{Corollary}
\newtheorem{theorem}{Theorem}[section]

\newtheorem{corollary}[theorem]{Corollary}
\newtheorem{lemma}[theorem]{Lemma}
\newtheorem{proposition}[theorem]{Proposition}
\newtheorem*{proposition*}{Proposition}

\newtheorem*{claim*}{Claim}

\newtheorem*{Theorem A}{Theorem A}
\newtheorem*{Theorem B}{Theorem B}

\theoremstyle{definition}
\newtheorem{definition}[theorem]{Definition}
\newtheorem{remark}[theorem]{Remark}
\newtheorem{example}[theorem]{Example}

\theoremstyle{remark}

\numberwithin{equation}{theorem}

\makeatletter
\renewcommand*\env@matrix[1][\
arraystretch]{%
  \edef\arraystretch{#1}%
  \hskip -\arraycolsep
  \let\@ifnextchar\new@ifnextchar
  \array{*\c@MaxMatrixCols c}}
\makeatother

\renewcommand{\mod}{\operatorname{mod}}
\newcommand{\proj}{\operatorname{proj}}
\newcommand{\inj}{\operatorname{inj}}
\newcommand{\prinj}{\operatorname{prinj}}
\newcommand{\Ext}{\operatorname{Ext}}
\newcommand{\End}{\operatorname{End}}
\newcommand{\Hom}{\operatorname{Hom}}
\newcommand{\add}{\operatorname{\mathrm{add}}}
\newcommand{\rad}{\operatorname{\mathrm{rad}}}
\newcommand{\soc}{\operatorname{\mathrm{soc}}}
\newcommand{\ul}{\underline}
\begin{document}

\title{On representation-finite gendo-symmetric biserial algebras}
\date{\today}

\subjclass[2010]{Primary 16G10, 16E10}

\keywords{Representation theory of finite dimensional algebras, Gorenstein dimension, gendo-symmetric algebra, Nakayama algebras, almost $\nu$-stable derived equivalence, Brauer tree algebras, dominant dimension}

\author{Aaron Chan}
\address{Graduate School of Mathematics, Nagoya University, Furocho, Chikusaku, Nagoya 464-8602, Japan}
\email{aaron.kychan@gmail.com}

\author{Ren\'{e} Marczinzik}
\address{Institute of algebra and number theory, University of Stuttgart, Pfaffenwaldring 57, 70569 Stuttgart, Germany}
\email{marczire@mathematik.uni-stuttgart.de}

\begin{abstract}
Gendo-symmetric algebras were introduced by Fang and Koenig in \cite{FanKoe2} as a generalisation of symmetric algebras.
Namely, they are endomorphism rings of generators over a symmetric algebra.
This article studies various algebraic and homological properties of representation-finite gendo-symmetric biserial algebras.
We show that the associated symmetric algebras for these gendo-symmetric algebras are Brauer tree algebras, and classify the generators involved using Brauer tree combinatorics.
We also study almost $\nu$-stable derived equivalences, introduced in \cite{HuXi1} between representation-finite gendo-symmetric biserial algebras.
We classify these algebras up to almost $\nu$-stable derived equivalence by showing that the representative of each equivalence class can be chosen as a Brauer star with some additional combinatorics.
We also calculate the dominant, global, and Gorenstein dimensions of these algebras.
In particular, we found that representation-finite gendo-symmetric biserial algebras are always Iwanaga-Gorenstein algebras.
\end{abstract}

\maketitle
\section{Introduction}
Throughout this article, all algebras are connected, non-semisimple, and finite dimensional over an algebraically closed field $K$.

Recently, Fang and Koenig introduced in \cite{FanKoe2} the notion of \emph{gendo-symmetric} algebras.
This terminology is just a shorthand of saying (that an algebra is isomorphic to) an {\it endo}morphism ring of a {\it gen}erator over some {\it symmetric} algebra.
Clearly, this class of algebras contains the class of symmetric algebras.
This containment is strict.
For example, Schur algebras $S(n,r)$ with $n \geq r$ are gendo-symmetric algebras of finite global dimensions.
More specifically, recall that the Schur algebra can be defined as the endomorphism algebra of direct sums of permutation modules over the group algebra of a symmetric group.
If the permutation module associated to the trivial Young subgroup is a direct summand, then the associated Schur algebra is gendo-symmetric, as this permutation module is just the regular representation of the group algebra.
We refer the reader to \cite{KSX} for more examples and details.

In the class of (non-semisimple) symmetric algebras, the subclass of Brauer tree algebras is well-known for being easy to carry out very explicit homological and algebraic calculations.
It is therefore natural to consider \emph{gendo-Brauer tree algebras}, i.e. {\it endo}morphism algebras of a {\it gen}erator over a {\it Brauer tree} algebra, as a toy model to gain some insight for properties of gendo-symmetric algebras.

Brauer tree algebras, whose origin arise from modular group representation theory, form an important class of algebras which often appears in disguise in several areas of mathematics, see for example \cite{Zi} for an introduction to those algebras. Brauer tree algebras are used to describe the representation-finite blocks of group algebras over an algebraically closed field.
Typical examples of Brauer tree algebras are the Brauer star algebras (a.k.a. symmetric Nakayama algebras), and the Brauer line algebras, which turn up as representation-finite blocks of the symmetric groups.

It is natural to expect that the class of all gendo-Brauer tree algebras could be difficult to study as a whole.
Therefore, we focus on those gendo-Brauer tree algebras which strongly resemble an ordinary Brauer tree algebra.
Recall the two following equivalent definitions of Brauer tree algebras:
\begin{enumerate}[(1)]
\item Algebraic characterisation \cite{GaRi,SkoWas}: the class of Brauer tree algebras coincides with the class of representation-finite symmetric biserial algebras.

\item Homological characterisation \cite{Ric,GaRi}: symmetric algebras derived (resp. stably) equivalent to a symmetric Nakayama algebra are Brauer tree algebras.
\end{enumerate}

This article will focus on the class of gendo-Brauer tree algebras which generalises the algebraic characterisation i.e. the class of representation-finite \emph{gendo-}symmetric biserial algebras.
A priori, it is not entirely clear if representation-finite gendo-symmetric biserial algebras are always gendo-Brauer tree algebras.
Our first result is to confirm that this is indeed the case.
In fact, we show this by determining which generators of Brauer tree algebras give rise to representation-finite gendo-symmetric biserial algebras.

It turns out that an indecomposable non-projective direct summand of such a generator lies on the boundaries of the stable Auslander-Reiten quiver - we call such a module a hook module.
However, not all direct sums of hook modules give rise to a representation-finite gendo-symmetric biserial algebra.
Nevertheless, it is possible to describe the precise condition of such a direct sum.
Recall that two modules are said to be stably orthogonal to each other if every module map between them factors through a projective module.

\begin{Theorem A}{\rm (Theorem \ref{thm-special-gBT})}\label{thmA}
Representation-finite gendo-symmetric biserial algebras are gendo-Brauer tree algebras.
Moreover, the following are equivalent for an algebra $A$.
\begin{enumerate}[label={\upshape(\arabic*)}]
\item $A$ is representation-finite gendo-symmetric biserial.
\item $A$ is Morita equivalent to $\End_B(B\oplus M)$ over a Brauer tree algebra $B$ with $M$ being a direct sum of hook modules whose indecomposable summands are pairwise stably orthogonal.
\end{enumerate}
\end{Theorem A}
A gendo-Brauer tree algebra satisfying one of these conditions will be called a \emph{special gendo-Brauer tree algebra}.
Similar to Brauer tree algebras, we can construct special gendo-Brauer tree algebras (up to Morita equivalence) using only some combinatorial data.
This will be explained in detail in Section \ref{sec-RFGSB}.

The next stage is to generalise the homological characterisation.
Roughly speaking, we want to characterise gendo-Brauer tree algebras which are derived equivalent to a gendo-symmetric Nakayama algebra.
Since gendo-symmetric Nakayama algebras are representation-finite biserial, it is natural to hope that the gendo-Brauer tree algebras which are derived equivalent to gendo-symmetric Nakayama algebras are also representation-finite biserial.
See also \cite{Ma1} for another investigation on gendo-symmetric Nakayama algebras.

The main difficulty of this task comes from the fact that derived equivalence can change dominant dimension in general (see \cite{CheXi} and \cite{Ma3}), which means that there is no hope to preserve gendo-symmetricity.
It is then natural to concentrate on a type of derived equivalence which preserves dominant dimension in hope to get around this problem.
It turns out that \emph{almost $\nu$-stable} derived equivalences introduced by Hu and Xi \cite{HuXi1}, which are already known to preserve dominant dimension, indeed preserve gendo-symmetricity (Theorem \ref{nuder-gendosymm}).
Moreover, one can see that almost $\nu$-stable derived equivalences between gendo-symmetric algebras, in some sense, arise from derived equivalences between the associated symmetric algebras, see Proposition \ref{nustable} for details.

As an application of these general results, we obtain an almost $\nu$-stable derived equivalence classification of special gendo-Brauer tree algebras.
In particular, we have the following gendo-symmetric generalisation of the homological characterisation of Brauer tree algebras.

\begin{Theorem B}{\rm (Theorem \ref{thm-nuder-rept} and \ref{thm-nuder-naka})}\label{thmB}
Every special gendo-Brauer tree algebra is almost $\nu$-stable derived equivalent to the endomorphism ring of a generator over a symmetric Nakayama algebra.
Moreover, a gendo-Brauer tree algebra which is almost $\nu$-stable derived equivalent to a gendo-symmetric Nakayama algebra is isomorphic to $\End_B(B\oplus M)$ where $B$ is a Brauer tree algebra and $M$ is a direct sum of hook modules which lie in a single $\Omega^2$-orbit.
\end{Theorem B}

Recall that symmetric Nakayama algebra can also be called Brauer star algebras.
We can now summarise our results using the following table, which compares the gendo-symmetric generalisation with the classical situation:
\begin{center}
\begin{tabular}{c|c}
Symmetric algebras  & Gendo-symmetric algebras \\ [0.2em]\hline
Brauer tree & special gendo-Brauer tree\rule{0pt}{1.2em}\\
$||$  &  $||$ \\
Representation-finite biserial & Representation-finite biserial\\
$||$   & $||$  \\
Derived equivalent to  & Almost $\nu$-stable derived equivalent to \\
Brauer star algebras & special gendo-Brauer star algebras\\
\end{tabular}
\end{center}
Note that if we take away ``special" from the first row or the last row, then the corresponding equality will not hold any more.
For example, the Auslander algebra of the local Brauer tree algebra $K[X]/(X^4)$ is a gendo-Brauer tree algebra which is neither representation-finite, nor biserial.
If we replace ``special gendo-Brauer star algebras" by ``gendo-symmetric Nakayama algebra", then the second equality also will not hold; see Example \ref{eg-dereq-gBT}.

An advantage of focusing on special gendo-Brauer tree algebras is that we can employ combinatorics of Brauer tree algebras to help investigate various properties of those algebras.
Indeed, we will use the Green's walk around a Brauer tree \cite{Gre} to obtain formulae for calculating various homological dimensions of special gendo-Brauer tree algebras.
\begin{proposition*}{\rm (Proposition \ref{prop-gor-dom-dim})}
Every representation-finite gendo-symmetric biserial algebra is Iwanaga-Gorenstein.
Moreover, both Gorenstein dimension and dominant dimension are independent of the exceptional multiplicity of the associated Brauer tree.
\end{proposition*}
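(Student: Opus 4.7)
The plan is to reduce both assertions to a combinatorial analysis via Green's walk around the Brauer tree. Write $A=\End_B(B\oplus M)$ as in Theorem A, with $B$ a Brauer tree algebra of multiplicity $m$ and $M$ a direct sum of pairwise stably orthogonal hook modules. Since $B$ is symmetric, $\Ext^i_B(B,-) = 0 = \Ext^i_B(-,B)$ for $i \geq 1$, so the Mueller-type formula for dominant dimension of endomorphism algebras of generator-cogenerators reduces to
\[
\mathrm{domdim}(A) \;=\; 1 + \inf\{\, i \ge 1 : \Ext^i_B(M, M) \neq 0 \,\}.
\]
Since $B$ is symmetric, $\Ext^i_B(X,Y) \cong \underline{\Hom}_B(\Omega^i X, Y)$, so the task is to locate the first $i$ at which a syzygy of a hook summand of $M$ admits a nonzero stable morphism to another hook summand.

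Each hook module sits on the boundary of the stable Auslander-Reiten quiver of $B$, and $\Omega^{\pm 1}$ acts on the boundary by Green's walk. Green's walk depends only on the tree shape, the cyclic orderings at the vertices, and the position of the exceptional vertex---but crucially not on the exceptional multiplicity $m$, which only affects the ranks of stable tubes and the lengths of uniserial indecomposable projectives. The stable-orthogonality hypothesis gives $\Ext^1_B(M,M) = 0$, and the first nonvanishing $i \ge 2$ is then recovered from an explicit walk calculation determined by the tree data alone. This yields $\mathrm{domdim}(A)$ in a form manifestly independent of $m$.

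For the Iwanaga-Gorenstein statement, I first observe that $A^{op}$ is again a special gendo-Brauer tree algebra: $A^{op} \cong \End_{B^{op}}(B^{op} \oplus DM)$ with $B^{op}$ a Brauer tree algebra on the dual tree and $DM$ a direct sum of hook modules with pairwise stably orthogonal summands, so a parallel $\Ext$-analysis applies on the right. In both cases, representation-finiteness of $B$ forces Green's walk on the hook summands of $M$ to reach a summand of $\add(B\oplus M)$ after a bounded number of steps, so the corresponding injective resolutions of $A$ on either side terminate in finitely many steps. This yields finite injective dimension on both sides, hence Iwanaga-Gorensteinness, and the bound inherits $m$-independence from the walk; the Gorenstein dimension is then computed by the same combinatorial data as the dominant dimension.

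The principal obstacle will be the explicit Green's-walk bookkeeping restricted to the chosen hook summands: one must identify, for each summand, the precise step at which its iterated syzygy first becomes stably non-orthogonal to another summand of $B\oplus M$, and match this with the parallel analysis on $A^{op}$. The stable-orthogonality hypothesis and the explicit description of hook modules as uniserial modules coming from initial segments of edges around a vertex are exactly what make this calculation tractable and verifiably independent of $m$.
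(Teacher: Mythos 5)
Your first half is fine and essentially parallels the paper: the dominant dimension can indeed be read off from Mueller's lemma together with the two facts that $\Omega^{\pm1}$ moves hook modules along the Green's walk (Lemma \ref{lem-GW-resol}) and that for hook modules $X,Y$ one has $\ul{\Hom}_B(X,Y)\neq 0$ only for $Y\cong X$ or $Y\cong\Omega(X)$ (as in the proof of Theorem \ref{thm-special-gBT}); both facts depend only on the planar tree and the chosen set of hooks, not on $\ul{m}$, and the paper itself uses Mueller's criterion in the same way in Proposition \ref{rigid}. (The paper instead reads domdim directly off the explicit coresolutions, but the two computations agree.)

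The genuine gap is in the Iwanaga--Gorenstein half. The inference ``the cosyzygy walk reaches a summand of $\add(B\oplus M)$ after boundedly many steps, \emph{so} the injective coresolutions of $A$ terminate'' is precisely the statement that needs proof, and it does not follow from the walk/rigidity considerations you have set up. Reaching $\add(M\oplus\Omega M)$ only tells you where the coresolution of the projective non-injective $A$-module $\Hom_B(B\oplus M,M_i)$ stops consisting of projective-injectives (this is what decides domdim); a priori the minimal coresolution could then continue forever through non-projective injective $A$-modules, in which case neither finiteness of $\operatorname{injdim}(A_A)$ nor your claimed formula would follow. What is needed is the content of Lemma \ref{lem-injres}: realising $A$ up to Morita equivalence as the quotient $\Gamma_{\ul{G},\ul{m}}^W$ of the enlarged Brauer tree algebra $C=\Lambda_{\ul{G}^W,\ul{m}^W}$ (Definition \ref{def-special-gBT}, Lemma \ref{special-gBT}), one tracks the coresolution inside $\mod C$ via Lemma \ref{lem-GW-resol} and identifies the cosyzygy at the stopping point as the injective envelope of a simple $A$-module, i.e. the injective $D(Ae_{(x'|y')})$ attached to the \emph{next} element of $W$ along the walk; hence the coresolution stops there, giving finiteness, the $\max$/$\min$ formulas, and their independence of $\ul{m}$ in one stroke. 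Your proposal never identifies this terminal injective, and without it the Gorenstein statement is unproved. A smaller remark: the detour through $A^{\mathrm{op}}$ is legitimate but unnecessary --- once one-sided injective dimension is finite, representation-finiteness together with the finitistic-dimension facts recalled in subsection \ref{subsec-homdim} already makes the Gorenstein dimension well-defined, which is how the paper concludes from a one-sided computation.
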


Note that while all special gendo-Brauer tree algebras are Iwanaga-Gorenstein, this is not true for general gendo-Brauer tree algebras.
An explicit example of a non-Iwanaga-Gorenstein gendo-Brauer tree algebra is detailed in \cite{Ma4}.

In addition to the above proposition, we can also determine special gendo-Brauer trees which have finite global dimension.
In particular, we can classify the special gendo-Brauer tree algebras which are also higher Auslander algebras, in the sense of \cite{Iya1,Iya2}.
See Section \ref{sec-hom-property} for more details.

\medskip

Finally, we remark that while the algebras appearing in this article are a generalisation of Brauer tree algebras, the condition we choose to relax is vastly different from that of the recent work of Green and Schroll \cite{GrSch,GrSch2} on Brauer configuration algebras.
In essence, special gendo-Brauer tree algebras arise from generalising symmetricity (a homological property), whereas Brauer configuration algebras are obtained by generalising biseriality (an algebraic property).

This article presents preliminary, and hopefully inspiring, material on studying homological dimensions of general gendo-symmetric algebras by focusing on a nice class of algebras.
The motivation behind this is to find Iwanaga-Gorenstein gendo-symmetric algebras and study their Gorenstein homological algebra.
In contrast, one theme of the articles on the Brauer configuration algebras is to seek for a generalisation of the so-called string and band modules in the representation-wild world.
Moreover, their algebras are always symmetric, which means that there is no difference between their Gorenstein and ``ordinary" homological algebra.

\medskip

This article is structured as follows.
Preliminary material will be presented in Section \ref{sec-prelim}.
Subsection \ref{subsec-homdim} recalls definitions and facts about various homological dimensions.
We will recall definitions and result about various generalisations of self-injective algebras, such as gendo-symmetric algebras, in subsection \ref{subsec-gendosymm}.
Subsection \ref{subsec-naka} defines Nakayama algebras associated to a cyclic quiver, and it also recalls some basic results in module and Auslander-Reiten theory of symmetric Nakayama algebras.
Similar material for Brauer tree algebras is shown in subsection \ref{subsec-BTA}.

Section \ref{sec-RFGSB} contains material leading up to \hyperref[thmA]{Theorem A} (Theorem \ref{thm-special-gBT}), which is the algebraic characterisation of special gendo-Brauer tree algebras.
Section \ref{sec-nuder} deals with the homological characterisation.
This section is subdivided into three parts.
Subsection \ref{subsec-nuder-general} presents some general theory of almost $\nu$-stable derived equivalence for gendo-symmetric algebras.
We apply these general results on gendo-Brauer tree algebras in subsection \ref{subsec-nuder-sgBT} to classify almost $\nu$-stable derived equivalence classes (Theorem \ref{thm-nuder-rept}).
In particular, this gives us \hyperref[thmB]{Theorem B} (Theorem \ref{thm-nuder-naka}).
In subsection \ref{subsec-counting}, we use this classification to explore some interesting connections with enumerative combinatorics.
In Section \ref{sec-hom-property}, we calculate various homological dimensions of our gendo-Brauer tree algebras using the Green's walk around a Brauer tree.
Finally, we finish the article with two interesting classes of examples in Section \ref{sec-example}.

\section*{Acknowledgment}
This research was initiated during the ``Conference on triangulated categories in algebra, geometry and topology" and ``Workshop on Brauer graph algebras" in Stuttgart University, March 2016.
We thank Steffen Koenig for comments on an earlier draft.
AC is supported by IAR Research Project, Institute for Advanced Research, Nagoya University.

\section{Preliminaries}\label{sec-prelim}
We follow the notation and conventions of \cite{SkoYam}.
In particular, all modules are finite dimensional right modules and all maps are module maps unless otherwise stated.
The $K$-linear duality is denoted by $D(-)$.

Let $A$ be an algebra.
The Nakayama functor $D\Hom_A(-,A_A)$ and its inverse is denoted by $\nu$ and $\nu^{-1}$ respectively.
The Auslander-Reiten translate and its inverse are denoted by $\tau$ and $\tau^{-1}$ respectively.
The full subcategory of projective (resp. injective, resp. projective-injective) $A$-modules is denoted by $\proj A$  (resp. $\inj A$, resp. $\prinj A$).
For a module $M$, we denote by $|M|$ the number of indecomposable direct summands of $M$ up to isomorphism.
We will usually denote an idempotent by $e$, or by $e_i$ for a label $i$ of (an isomorphism class of) a simple module.

We assume that the reader is familiar with derived and stable categories (see, for example, \cite{Zi}), as well as basic Auslander-Reiten theory (see, for example, \cite{ARS}).

\subsection{Homological dimensions}\label{subsec-homdim}
We now recall definitions and facts of various homological dimensions.

The \emph{finitistic dimension} of an algebra is the supremum of projective dimensions of all modules having finite projective dimension.
The finitistic dimension conjecture claims that the finitistic dimension is always finite.
It is known that the finitistic dimension conjecture is true for representation-finite algebras.

\medskip 

If the left injective dimension and right injective dimension of  the regular module $A$ coincide, then we call such dimension the \emph{Gorenstein dimension} of $A$.
The Gorenstein symmetry conjecture claims that these two dimension always equal.
This conjecture is a consequence of the finitistic dimension conjecture, see \cite{ARS} in the section about homological conjectures.

Note that the finitistic dimension is always less than or equal to the Gorenstein dimension.
Moreover, when the later is finite, the two dimensions are equal, see for example \cite{Che}.
In particular, Gorenstein dimension is well-defined for representation-finite algebras.

If the Gorenstein dimension of $A$ is finite (say, equal to $d$), then $A$ is also called a \emph{($d$-)Iwanaga-Gorenstein algebra} \cite{EncJen}.

\medskip 

Let $M$ be an $A$-module with minimal injective coresolution $I^\bullet=(I^n)_{n\geq 0}$ and minimal projective resolution $P_\bullet=(P_n)_{n\geq 0}$.
The \emph{dominant dimension} and \emph{codominant dimension} of $M$ are
\begin{align}
\mathrm{domdim}(M) & :=\begin{cases}
1+\sup \{ n \geq 0 | I^j \in \proj A \text{ for } j=0,1,...,n \} & \mbox{ if $I^0$ is projective,}\\
0 & \mbox{ if $I^0$ is not projective;}
\end{cases}\notag \\
\mathrm{codomdim}(M) & :=\begin{cases}
1+\inf \{ n \geq 0 | P_j \in \inj A \text{ for } j=0,1,...,n \} & \mbox{ if $P_0$ is projective,}\\
0 & \mbox{ if $P_0$ is not projective.}
\end{cases}\notag
\end{align}
respectively.

The \emph{dominant dimension} of $A$, denoted by domdim($A$), is the dominant dimension of the regular $A$-module $A_A$. The dominant dimensions of the left regular module and the right regular module always coincide, see \cite{Ta} for example.
It is well-known that domdim($A$)$\geq 1$ if and only if there is an idempotent $e$ such that $eA$ is minimal faithful and injective. 
Note that minimality here means that $eA$ is a direct summand of any faithful $A$-module, which implies that we have $\add(eA)=\prinj A$ (as the injective envelope of $A$ is faithful and projective).

\subsection{Gendo-symmetric and QF-1,2,3 algebras}\label{subsec-gendosymm}
Following \cite{FanKoe2}, a \emph{gendo-symmetric algebra} is an algebra isomorphic to an endomorphism ring of a generator over some symmetric algebra.

In the more general setting of self-injective algebras in place of symmetric algebras, the analogous notion is called \emph{Morita algebra} in \cite{KerYam}.

In \cite{FanKoe}, the authors give an equivalent characterisation of gendo-symmetric algebras in the style of the Morita-Tachikawa correspondence \cite{Ta}.
Namely, an algebra $A$ is gendo-symmetric if and only if domdim($A$)$\geq2$ and there is an idempotent $e$ of $A$ such that $eA$ is the minimal faithful projective-injective module with $D(Ae) \cong eA$ as $(eAe,A)$-bimodules:
\begin{align}
\left\{\begin{array}{l|c} & \mathrm{domdim}(A)\geq 2, \exists e^2=e\in A \text{ such that}\\ 
A &\mbox{$eA$ minimal faithful projective-injective,}\\
&  \text{and }D(Ae)\cong eA \text{ as }(eAe,A)\text{-bimodules}\end{array}\right\} & \leftrightarrow \left\{\begin{array}{l|c}
(B,M) & \mbox{$B$ is a symmetric algebra, $M$ is}\\
& \mbox{a generator(-cogenerator) of $B$}
\end{array}\right\}\notag \\
A & \mapsto (eAe,Ae) \notag \\
\End_B(M) & \mapsfrom (B,M) \notag 
\end{align}
For other equivalent characterisations of gendo-symmetric algebras, see \cite{FanKoe,FanKoe2}.
For the Morita algebra version of this correspondence, see \cite{KerYam}.
It follows from the above correspondence that the $(A,eAe)$-bimodule $Ae$ has the double centraliser property: $A\cong \End_{eAe}(Ae)$.
Let $\mathrm{Dom}_2:=\mathrm{Dom}_2(A)$ denote the full subcategory of all $A$-modules having dominant dimension at least two.
In the case when $A$ is gendo-symmetric, this means that the injective copresentation of a module in $\mathrm{Dom}_2$ is in $\add(eA)$.
It follows from \cite[Prop 5.6]{Aus} (see also \cite[Lem 3.1]{APT}) that the Schur functor $(-)e : \mathrm{Dom}_2 \to \mod{eAe}$ is an equivalence of categories for gendo-symmetric (and Morita) algebras $A$.

We remark that if $B$ is a basic self-injective algebra with $n$ simple modules (up to isomorphism) and $M=B \oplus N$ for a basic module $N$ without projective summand, the algebra $A:=\End_B(M)$ has exactly $n$ indecomposable projective-injective modules up to isomorphism.

Recall the following generalisations of quasi-Frobenius (i.e. self-injective) algebras from \cite{Thr}.
\begin{definition}
Let $A$ be a finite dimensional algebra.
\begin{itemize}
\item $A$ is \emph{QF-1} if every faithful left $A$-module $M$ has double centraliser property, i.e. there is a ring epimorphism $A\to \End_{\Gamma}(M_{\Gamma})$, where $\Gamma:=\End_A(M)^{\mathrm{op}}$. 
\item $A$ is \emph{QF-2} if every indecomposable projective $A$-module has a simple socle.
\item $A$ is \emph{QF-3} if $\mathrm{domdim}(A)\geq 1$.
\end{itemize}
\end{definition}
Note that any self-injective algebra satisfies all three conditions.
It follows from the definition that any Morita and gendo-symmetric algebra must be QF-3.

\begin{proposition}
A Morita (resp. gendo-symmetric) algebra is QF-1 if, and only if, it is self-injective (resp. symmetric).
\end{proposition}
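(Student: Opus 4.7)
My plan is as follows. For the ``if'' direction, self-injective algebras are QF-1 by the classical theorem of Thrall~\cite{Thr}: any quasi-Frobenius algebra satisfies the double centraliser property on every faithful module, so in particular a symmetric algebra (being a special case of self-injective) is QF-1. This takes care of both parts of the statement simultaneously, since a symmetric algebra is automatically both gendo-symmetric (with $M=B$) and self-injective.

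For the ``only if'' direction, I would start from the Morita-Tachikawa-Fang-Koenig correspondence recalled above. If $A$ is a Morita (resp.\ gendo-symmetric) algebra, I write $A \cong \End_B(B\oplus N)$ for a basic self-injective (resp.\ symmetric) algebra $B$ and a basic $B$-module $N$ with no projective direct summand. Letting $e\in A$ be the idempotent projecting onto the $B$-summand of $B\oplus N$, one has that $eA$ is the minimal faithful projective-injective right $A$-module with $\add(eA)=\prinj A$, and the double centraliser property $A\cong\End_{eAe}(Ae)$ holds. The aim is to show that the QF-1 hypothesis forces $N=0$, so that $A \cong \End_B(B) \cong B^{\mathrm{op}}$ is itself self-injective (resp.\ symmetric, using that symmetricity passes to the opposite algebra).

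The key intermediate step is to show that a QF-3 algebra satisfying QF-1 must be self-injective. In our setting, suppose for contradiction $N\neq 0$; then there exists an indecomposable projective summand $P$ of $A$ with $P\notin\prinj A$, and correspondingly an indecomposable injective $I=\nu(P)$ which is not projective, so $I\notin\add(eA)$. I would consider the faithful right $A$-module $F := eA\oplus I$, compute its endomorphism ring $\End_A(F)$ as a generalised matrix ring, and exhibit an element of the bicommutant $\End_{\End_A(F)^{\mathrm{op}}}(F)$ that does not arise from the natural right $A$-action. The obstruction should come from the asymmetry between $\Hom_A(eA,I)$ and $\Hom_A(I,eA)$ created by $I$ being injective but not projective, together with the fact that the $(eAe,A)$-bimodule isomorphism $D(Ae)\cong eA$ (in the gendo-symmetric case) rigidly controls how $I$ interacts with $eA$. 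Alternatively, one can appeal to classical characterisations of QF-1 among QF-3 algebras due to Morita and Tachikawa, which essentially say that QF-1 forces every indecomposable injective to lie in $\add(eA)$.

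Once self-injectivity is established, the structural identity $A\cong\End_B(B\oplus N)$ immediately gives $\add(eA)=\add(A)$, forcing $N$ to be a summand of some $B^k$, hence projective; by the basic assumption $N$ has no projective summand, so $N=0$. The main obstacle I anticipate is the bicommutant computation in the QF-3 + QF-1 $\Rightarrow$ self-injective step, which requires careful tracking of the $(eAe,A)$-bimodule structure of $eA$ and of the double centraliser $A\cong\End_{eAe}(Ae)$; once that is handled cleanly, the rest of the argument is bookkeeping with the correspondence.
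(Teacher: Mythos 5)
Your ``if'' direction is fine, but the ``only if'' direction has a genuine gap: the key intermediate claim you propose --- that a QF-3 algebra which is QF-1 must be self-injective (equivalently, your paraphrase that QF-1 forces every indecomposable injective to lie in $\add(eA)$) --- is simply false for general QF-3 algebras. Take the path algebra of the quiver $1\to 2$ (upper triangular $2\times 2$ matrices over $K$): its minimal faithful module is the projective-injective module $P_1$ of length two, every indecomposable module ($S_1$, $S_2$, $P_1$) is generated or cogenerated by $P_1$, so by Morita's criterion \cite{Mor} it is QF-1 and QF-3, yet it is not self-injective and the injective $S_1$ is not in $\add(P_1)$. What Morita's theorem actually says for a QF-3 algebra is that QF-1 is equivalent to every \emph{indecomposable module} being generated or cogenerated by the minimal faithful module, i.e.\ having dominant or codominant dimension at least one; it says nothing like ``all injectives are projective''. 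So the conclusion you want cannot be obtained from QF-3 plus QF-1 alone --- it must use the Morita/gendo-symmetric structure, and this is exactly where your argument stops short: the bicommutant computation with $F=eA\oplus I$ is only described as what ``should'' produce an obstruction, not carried out, and carrying it out is essentially as hard as the proposition itself.

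For comparison, the paper closes this gap by applying Morita's criterion to a well-chosen indecomposable module rather than trying to prove self-injectivity from QF-1 abstractly: if $A$ is a non-self-injective Morita (resp.\ non-symmetric gendo-symmetric) algebra, pick a projective non-injective module $fA$ and let $S$ be its simple top. Then $\mathrm{codomdim}(S)=0$ since its projective cover $fA$ is not injective, and its injective envelope $D(Af)$ is not projective: by the bimodule isomorphism $D(Ae)\cong eA$ (valid for gendo-symmetric algebras and, by \cite{KerYam}, for Morita algebras) every projective injective lies in $\add(eA)\cong\add(D(Ae))$, and $D(Af)$ is not a summand of $D(Ae)$ because $Af$ is not injective. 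Hence $\mathrm{domdim}(S)=0$ as well, and $S$ violates Morita's criterion, so $A$ is not QF-1. If you want to salvage your route, you would need to replace the false general lemma by an argument of this kind (or actually exhibit the extra bicommutant element for $eA\oplus I$ using $D(Ae)\cong eA$), and your final bookkeeping step reducing to $N=0$ would then go through.
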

\begin{proof}
This follows from the results in \cite{Mor} (as well as the reason for the naming of Morita algebra), which we clarify in the following.
It is enough to prove the only-if-direction.

Recall from \cite[Theorem 1.3]{Mor} (see also \cite[Cor 3.5.1]{Yam}) the following equivalent condition for a QF-3 algebra $A$ to be QF-1: any indecomposable $A$-module is generated or cogenerated by $N$ for any minimal faithful $A$-module $N$.

We are only left to show that a Morita (resp. gendo-symetric) QF-1 algebra is self-injective (resp. symmetric).
Suppose $A$ is a non-self-injective Morita (resp. non-symmetric gendo-symmetric) algebra with idempotent $e$, where $eA$ is the unique  minimal faithful projective-injective right $A$-module.
Now Morita's characterisation says that $A$ being QF-1 is equivalent to having $\mathrm{domdom}(M)\geq 1$ or $\mathrm{codomdim}(M)\geq 1$ for all indecomposable $A$-modules $M$.

Take the simple top $S$ of a projective non-injective $A$-module $fA$.
Then clearly its codominant dimension is zero. The injective envelope of $S$ is then $D(Af)$.
Recall that $A$ being gendo-symmetric implies that $D(Ae)\cong eA$ as right $A$-module; the same holds for a Morita algebra $A$ by \cite[Lemma 2.1]{KerYam}.
This means that $D(Af)$ is not a summand of $D(Ae)\cong eA$ as $eA$ is the minimal faithful projective-injective module, because $Af$ is not injective. Thus $D(Af)$ is not projective.
Therefore, the dominant dimension of $S$ is also zero.
\end{proof}

\subsection{Nakayama algebras}\label{subsec-naka}
Readers who require more details on the material in this subsection are advised to consult \cite{ARS,GaRi}.
We now fix some notations concerning representations of Nakayama algebras by recalling some background information.

In this article, we will only use basic indecomposable Nakayama algebras whose quiver is cyclic (instead of linear).
This is equivalent to saying that there is no simple projective module.
Any local Nakayama algebra is isomorphic to $K[X]/(X^d)$, for some $d \geq 2$.
We label vertices and orient arrows of the quiver for  a Nakayama algebra with $n$ simples (up to isomorphism) as follows:
\begin{align}Q_n:
\xymatrix@R=16pt{ & 1 \ar[ld] & &0 \ar[ll] &   \\
2 \ar[rd] &  &  &  & n-1 \ar[lu] \\
& 3 \ar[r] & \cdots \ar[r] & n-2 \ar[ru]&
}
\notag
\end{align}
It is conventional to let $\mathbb{Z}/n\mathbb{Z}$ denote the set of vertices, and so for any vertices $i$ and $j$, $i+j$ is always taken modulo $n$.

A Nakayama algebra $A$ with $n$ simple modules is uniquely determined by its \emph{Kupisch series} $[c_0,c_1,\ldots,c_{n-1}]$, where $c_i$ is the Loewy length (hence, dimension) of the indecomposable projective module $e_iA$.
Note that $c_i - c_{i+1} \leq 1$ for all $i\in \mathbb{Z}/n\mathbb{Z}$, and  $c_i\geq 2$ for all $i$, as we will only use Nakayama algebras whose quiver is cyclic.
Note also that a Nakayama algebra is self-injective (resp. symmetric) if and only if $c_i=c_j$ for all $i,j$ (resp. there exists $m\in \mathbb{Z}_{\geq 1}$ such that $c_i=mn+1$ for all $i$).

By definition, every indecomposable $A$-module is uniserial, i.e. it has a unique filtration with simple subquotients.
For a self-injective Nakayama algebra, an indecomposable module is uniquely determined by its socle and its Loewy length, i.e. by $(i,\ell)\in \mathbb{Z}/n\mathbb{Z}\times \{1,2,\ldots,c_i\}$; we will label the corresponding module $L(i,\ell)$.
Thus, $L(i,\ell) \cong e_iJ^{w-\ell}$ in the case when the algebra is symmetric with Loewy length $w$ and Jacobson radical $J$.
There are various different ways to label an indecomposable module by pairs of integers, but this labelling has the advantage of giving an easy coordination on the Auslander-Reiten quiver.

\begin{example}\label{eg-AR}
The following is the Auslander-Reiten quiver of the Nakayama algebra with Kupisch series $[4,4,4]$, where $(i,\ell)$ represents the position of the module $L(i,\ell)$, and the dashed arrows represent the effect of applying the Auslander-Reiten translate $\tau$.
\[
\xymatrix@C=2pt@R=10pt{
&  &  & (0,4) \ar[rd]& & (2,4) \ar[rd] & & (1,4) \ar[rd] & & (0,4)\\
  &  & (0,3) \ar[ru]\ar[rd]& & (2,3) \ar[ru]\ar[rd]\ar@{-->}[ll] & & (1,3) \ar[ru]\ar[rd]\ar@{-->}[ll] & & (0,3)\ar[ru]\ar@{-->}[ll]&\\
  & (0,2) \ar[ru]\ar[rd]& & (2,2) \ar[ru]\ar[rd]\ar@{-->}[ll] & & (1,2) \ar[ru]\ar[rd]\ar@{-->}[ll] & & (0,2)\ar[ru]\ar@{-->}[ll]&\\
(0,1) \ar[ru] & & (2,1) \ar[ru]\ar@{-->}[ll] & & (1,1) \ar[ru]\ar@{-->}[ll] & & (0,1)\ar[ru]\ar@{-->}[ll]&
}
\]
Note that the ``going-up diagonals" on the left and on the right are identified.
\end{example}

Let $B$ be a symmetric Nakayama algebra with $n$ isomorphism classes of simples and Loewy length $mn+1$.
Then the stable Auslander-Reiten quiver of $B$ is the full subquiver spanned by vertices $(i,\ell)$ for $i$ with $\ell\neq c_i=nm+1$; it is isomorphic, as a translation quiver, to $\mathbb{Z} A_{mn}/\langle \tau^n\rangle$.

For a vertex $(i,j)$ on the stable Auslander-Reiten quiver, its \emph{forward hammock} (resp. \emph{backward hammock}) is the full subquiver containing
\begin{eqnarray}
&\{ (x,y) \;\;|\;\;  i+1-j\leq x\leq i, \;\; j-i\leq y-x\leq nm-i\} \notag\\
(\text{resp.}&\{ (x,y) \;\;|\;\;  i\leq x\leq i+nm-j, \;\; i-1\leq x-y\leq i-j\}).\notag 
\end{eqnarray}
This region can be easily understood using the pictorial form:
\begin{center}
\begin{tikzpicture}[scale=0.9,inner sep=1pt, outer sep=0pt]
\node (v9) at (-5.5,4.5) {};
\node (v11) at (-5.5,-0.5) {};
\node (v8) at (8.5,4.5) {};
\node (v10) at (8.5,-0.5) {};
\draw[dashed]  (v8) edge (v9);
\draw[dashed]  (v10) edge (v11);

\node (v1) at (1.5,2.5) {$(i,j)$};
\node[fill=white] (v2) at (3.5,4.5) {$(i,nm)$};
\node (v3) at (6.5,1.5) {$(i+1-j,nm+1-j)$};
\node[fill=white] (v4) at (4.5,-0.5) {$(i+1-j,1)$};
\node[fill=white] (v6) at (-0.5,4.5) {$(i-j,nm)$};
\node (v5) at (-3.5,1.5) {$(i-j,nm+1-j)$};
\node[fill=white] (v7) at (-1.5,-0.5) {$(i,1)$};
\draw  (v1) edge (v2);
\draw  (v2) edge (v3);
\draw  (v4) edge (v3);
\draw  (v1) edge (v4);
\draw  (v5) edge (v6);
\draw  (v6) edge (v1);
\draw  (v5) edge (v7);
\draw  (v7) edge (v1);
\node [align=center] at (3.5,2) {forward\\ hammock};
\node [align=center] at (-0.5,2) {backward\\ hammock};
\end{tikzpicture}
\end{center}

Algebraically, $(x,y)$ is in the forward (resp. backward) backward hammock of $(i,j)$ if, and only if, $\ul{\Hom}_B(L(i,j),L(x,y))\neq 0$ (resp. $\ul{\Hom}_B(L(x,y),L(i,j))\neq 0$).
Moreover, it is now easy to see that the forward hammock of a vertex is also a backward hammock of another unique vertex.
The effect of applying $\Omega$ and $\Omega^{-1}$ to indecomposable $B$-modules can be visualised
as going to the other endpoint of the forward hammock and backward respectively.
That is, we have
\begin{align}\label{eq-Omega}
\Omega(L(i,j)) \cong L(i+1-j,nm+1-j) \quad\mbox{and}\quad \Omega^{-1}(L(i,j)) \cong  L(i-j,nm+1-j)
\end{align}
respectively.

The following statement is somewhat well-known, we will include a brief explanation in case the reader has difficulties finding the relevant literature.
\begin{proposition}
\label{extnak}
Let $A$ be a symmetric Nakayama algebra with $n$ simple modules (up to isomorphism) and Loewy length $w=nm+1$.
If $n\geq 2$, then for an indecomposable, non-projective $A$-module $L(i,\ell)$, the following conditions are equivalent:
\begin{enumerate}[label={\upshape(\arabic*)}]
\item $\Ext_A^2(L(i,\ell),L(i,\ell))=0$,

\item $\ell = 1$ or $\ell=nm$,

\item it is a simple top (hence, socle) or a radical of an indecomposable projective module,

\item it is either in the $\tau$-orbit of a simple module, or the $\tau$-orbit of the radical of an indecomposable projective module,

\item it is in the $\Omega$-orbit of a simple module or the radical of an indecomposable projective module,

\item it is the image of the radical of an indecomposable projective module under a stable auto-equivalence of $\ul{\mod}A$,

\item it is the image of a simple module under a stable auto-equivalence of $\ul{\mod}A$.
\end{enumerate}
If $n=1$ (i.e. $A$ is local), then the conditions {\rm (2)} to {\rm (7)} are equivalent to each other for an $\ell$-dimensional indecomposable module $L(0,\ell)$ with $i=0$.
\end{proposition}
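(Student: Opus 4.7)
The plan is to organise the equivalences into three natural blocks: the homological condition (1) against the combinatorial condition (2); the combinatorial chain $(2) \Leftrightarrow (3) \Leftrightarrow (4) \Leftrightarrow (5)$; and the stable-categorical block $(5) \Leftrightarrow (6) \Leftrightarrow (7)$. The degenerate local case $n=1$ will be treated separately at the end.

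For $(1) \Leftrightarrow (2)$ (assuming $n \geq 2$), I will use the standard isomorphism $\Ext_A^2(L,L) \cong \ul{\Hom}_A(L, \Omega^{-2}L)$ which holds over any self-injective algebra. Applying the $\Omega^{-1}$ part of \eqref{eq-Omega} twice yields $\Omega^{-2}L(i,\ell) = L(i-1,\ell)$. Since $\ul{\Hom}_A(L(i,\ell), L(x,y)) \neq 0$ if and only if $(x,y)$ lies in the forward hammock of $(i,\ell)$, the question reduces to whether $(i-1, \ell)$ is in that hammock. A short inspection of the hammock coordinates shows this holds precisely when $2 \leq \ell \leq nm-1$, so the vanishing condition is exactly $\ell \in \{1, nm\}$.

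The chain $(2)$--$(5)$ is essentially combinatorial bookkeeping with the coordinate formulas from Subsection~\ref{subsec-naka}. Since each indecomposable projective has Loewy length $nm+1$, those indecomposables with $\ell = 1$ are precisely the simples and those with $\ell = nm$ are precisely the radicals of indecomposable projectives; this gives $(2) \Leftrightarrow (3)$. Using $\tau = \Omega^2$ for symmetric algebras together with \eqref{eq-Omega} one computes $\tau L(i,\ell) = L(i+1,\ell)$, so $\tau$-orbits are exactly the level sets of $\ell$, giving $(3) \Leftrightarrow (4)$. Finally, $\Omega L(i,1) = L(i,nm)$ shows that simples and radicals of indecomposable projectives lie in a single common $\Omega$-orbit, yielding $(4) \Leftrightarrow (5)$.

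The direction $(5) \Rightarrow (6),\,(7)$ is immediate because $\Omega$ is itself a stable auto-equivalence. The converse $(6),\,(7) \Rightarrow (3)$ is what I expect to be the main obstacle. My plan is to argue that any stable auto-equivalence $F$ of $\ul{\mod}A$ preserves almost split sequences, hence induces an auto-equivalence of the stable AR-quiver viewed as a translation quiver; such an automorphism must send boundary vertices to boundary vertices, and the boundary of the stable AR-quiver of a symmetric Nakayama algebra consists exactly of the modules described in (3). For the local case $n = 1$ we have $\Omega^2 = \mathrm{id}$ on the stable category, so $\Ext_A^2(L,L) \cong \ul{\End}(L) \neq 0$ for every non-projective indecomposable $L$, which is precisely why (1) drops out; the combinatorial and translation-quiver arguments for $(2)$--$(7)$ all carry over, each condition reducing to $L \in \{L(0,1), L(0,m)\}$.
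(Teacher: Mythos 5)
Your proposal is correct, and for most of the statement it runs along the same lines as the paper: for $(1)\Leftrightarrow(2)$ the paper computes $\Ext^2_A(L,L)\cong\ul{\Hom}_A(\Omega^2L,L)=\ul{\Hom}_A(L(i+1,\ell),L(i,\ell))$ and reads off the answer from the backward hammock of $L(i,\ell)$, while you use the dual formulation $\ul{\Hom}_A(L,\Omega^{-2}L)=\ul{\Hom}_A(L(i,\ell),L(i-1,\ell))$ and the forward hammock; these are the same computation, and your conclusion ($2\leq\ell\leq nm-1$ exactly when the space is nonzero) is right. Your explicit bookkeeping for $(2)$--$(5)$ fills in what the paper dismisses as ``easy to see'', and is accurate (including the observation that all simples and all radicals of projectives form a single $\Omega$-orbit). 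The genuine divergence is in $(6),(7)\Rightarrow(3)$: the paper invokes Asashiba's theorem that the stable auto-equivalence group of $\ul{\mod}A$ is generated by Morita equivalences and $\Omega$, both of which visibly preserve the set in (3)/(5); you instead argue that any stable auto-equivalence induces an automorphism of the stable AR-quiver and that such an automorphism preserves the boundary. The boundary step is fine (for $nm\geq 2$ the boundary rows are exactly the vertices of valency two in $\mathbb{Z}A_{nm}/\langle\tau^n\rangle$, so even a plain quiver automorphism preserves them), and your approach has the virtue of not needing the full strength of Asashiba's classification.

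The one point you should not leave implicit is the claim that an arbitrary stable auto-equivalence ``preserves almost split sequences''. This is classical (Auslander--Reiten, cf.\ \cite{ARS} Ch.~X) but not formal: the standard statements carry caveats about nodes and about almost split sequences whose middle term has a projective summand, and for a self-injective algebra the sequences $0\to\rad P\to P\oplus\rad P/\soc P\to P/\soc P\to 0$ are of exactly this kind, i.e.\ the arrows adjacent to the top boundary row are the ones at risk. So either cite the precise result together with the observation that symmetric Nakayama algebras of Loewy length at least $3$ have no nodes (the only exception, $K[X]/(X^2)$, has a unique non-projective indecomposable, so the claim is vacuous there), or fall back on the fact--used elsewhere in the paper--that stable equivalences between representation-finite self-injective algebras preserve the stable Auslander--Reiten structure. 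With that reference supplied, your argument is complete; your remark explaining why (1) must be excluded when $n=1$ (since then $\Omega^2\cong\mathrm{id}$ on objects, so $\Ext^2_A(L,L)\cong\ul{\End}_A(L)\neq 0$) is a nice addition not present in the paper.
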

\begin{proof}
(1)$\Leftrightarrow$(2):
We have the following isomorphisms of vector spaces:
\begin{align}
\Ext_A^2(L(i,\ell),L(i,\ell) & \cong \ul{\Hom}_A(\Omega^2 L(i,\ell),L(i,\ell)) \notag \\
& \cong \ul{\Hom}_A(\tau L(i,\ell),L(i,\ell)) \notag \\
& \cong \ul{\Hom}_A(L(i+1,\ell),L(i,\ell)),\notag 
\end{align}
where the second isomorphism follows from the fact that $\Omega^2 \cong \tau$ for symmetric algebras, and the third isomorphism is just using the fact that $\tau(L(i,\ell))\cong L(i+1,\ell)$.
For $n\geq 2$, it can be easily seen from the backward hammock of $L(i,\ell)$ that $\ul{\Hom}_A(L(i+1,\ell),L(i,\ell))\cong 0$ is equivalent to (2).

(2)$\Leftrightarrow$(3)$\Leftrightarrow$(4)$\Leftrightarrow$(5): Easy to see using the (stable) Auslander-Reiten quiver of $A$ and the hammocks.

(5)$\Leftrightarrow$(7)$\Leftrightarrow$(6): This follows from the fact that the group of stable auto-equivalences on $\ul{\mod}A$ is generated by Morita equivalences and $\Omega$, see for example \cite{Asa}.
\end{proof}

\subsection{Brauer tree algebras}\label{subsec-BTA}
\begin{definition}\label{def-biserial}
An algebra is said to be \emph{biserial} if for any indecomposable projective module $P$, $\rad P=U+V$ for some uniserial modules $U$ and $V$ with $U\cap V$ being simple or zero.

An algebra is \emph{special biserial} if it is Morita equivalent to a bounded path algebra $KQ/I$ with admissible ideal $I$ such that
\begin{itemize}
\item For each $v\in Q_0$, there are at most two arrows whose source (resp. target) is $v$.
\item For each $\alpha \in Q_1$, there is at most one arrow $\beta$ such that $\alpha\beta\notin I$ (resp. $\beta\alpha\notin I$).
\end{itemize}
\end{definition}
We remark that representation-finite biserial algebras are always special biserial; this is a result of \cite{SkoWas} (and the origin of ``speciality") which will be used extensively.

We say that an algebra $A$ is a \emph{Brauer tree algebra} if it is Morita equivalent to a representation-finite symmetric biserial algebra.
In particular, a symmetric Nakayama algebra is a Brauer tree algebra.

It is easier to work with a more explicit definition of Brauer tree algebras which is given in the following.

\begin{definition}\label{def-BT-com}
A \emph{Brauer tree} is a datum $(G=(V,E),\sigma:=(\sigma_v)_{v\in V},\ul{m}:=(m_v)_{v\in V})$ where
\begin{itemize}
\item $G=(V,E)$ is a (finite) graph which is also tree, where $V$ is the set of vertices and $E$ is the set of edges;
\item for each $v\in V$, $\sigma_v$ is a cyclic ordering (permutation) of all the edges incident to $v$;
\item $(m_v)_{v\in V}$ is a series of positive integers so that $m_v=1$ for all but at most one vertex; each $m_v$ is called the multiplicity of $v$.
\end{itemize}
The \emph{exceptional vertex} of $(G,\sigma,\ul{m})$ is the vertex whose multiplicity, which will be called \emph{exceptional multiplicity}, is not equal to 1.
In the case when $m_v=1$ for all $v\in V$ (which will be denoted by $\ul{m}\equiv 1$), any vertex and its associated multiplicity can be regarded as being exceptional.
A vertex of valency 1 will be called a \emph{leaf vertex}, and its attached edge is called a \emph{leaf}.
\end{definition}

We will usually just specify a datum by a tuple $(\ul{G},\ul{m})$, where $\ul{G}$ is the graph $G$ equipped with the cyclic orderings $(\sigma_v)_{v\in V}$.
We call $\ul{G}$ a \emph{planar tree} for short.
The edge immediately before (resp. after) $x$ in the cyclic ordering around $v$ is called the \emph{predecessor} (resp. \emph{successor}) of $x$ around $v$.
Whenever we visualise a planar tree in a picture, we will present the edges emanating from each vertex according to the associated cyclic ordering in the counter-clockwise direction.

Define a set $H:=H_{\ul{G}}$ of symbols $(x|y)$ with $x,y$ being edges of $G$ so that they are both incident to a vertex $v$ and $y$ is the successor of $x$ around $v$.
We also call the vertex $v$ in this instance as the \emph{vertex associated to $(x|y)$} (or \emph{associating vertex of $(x|y)$}).
To avoid ambiguity, we take $H_{\ul{G}}:=\{(x|x),(\overline{x}|\overline{x})\}$ in the case when $\ul{G}$ has only one edge $x$.

The \emph{Brauer quiver} associated to $\ul{G}$ is a quiver, denoted by $Q_{\ul{G}}$, whose set of vertices is the set $E$ of edges in $G$, and the set of arrows are given by $x\to y$ for each $(x|y)\in H$.
While it makes sense to identify $(Q_{\ul{G}})_1$ with $H$, there will come a time when we need to distinguish arrows with elements of $H$.
Therefore, we will always denote an arrow by $(x\to y)$ instead of just simply $(x|y)$.

Let $\rho_{x,v}=(x=x_0\to x_1\to\cdots \to x_k=x)$ denote the simple cycle (i.e. the one without repeating arrow) in $Q_{\ul{G}}$ so that $x_i$ is incident to $v$ for all $i\in\{1,2,\ldots,k\}$.
Note that $k$ is the valency of the vertex $v$ in $G$.

\begin{definition}\label{def-BT}
An algebra $B$ is a \emph{Brauer tree algebra} associated to the Brauer tree $(\ul{G},\ul{m})$ if it is Morita equivalent to the bounded path algebra $\Lambda_{\ul{G},\ul{m}}:= KQ_{\ul{G}}/I$, where the ideal $I$ is generated by the following Brauer relations:
\begin{itemize}
\item $(x\to y\to z)=0$, if $x$ and $z$ are incident to different vertices;
\item $\rho_{x,u}^{m_u}=\rho_{x,v}^{m_v}$, where $u,v$ are the two endpoints of $x$.
\end{itemize}
\end{definition}

\begin{example}\label{eg-BT1}
Consider the Brauer tree $(\ul{G},\ul{m})$ where $\ul{G}$ has the following visualisation:
\[
\xymatrix@R=12pt@C=28pt{
 &\circ \ar@{-}[d]^{0} & & \\
\circ\ar@{-}[r]^{1} & \circ \ar@{-}[r]^{2} & \circ \ar@{-}[r]^{3} & \circ
}
\]
This means that the cyclic ordering of the 3-valent vertex $u$ is $(0,1,2)$ and the cyclic ordering of the other endpoint $v$ of edge $2$ is $(2,3)$.
Suppose the exceptional vertex is $v$ with multiplicity $2$.
Then all the indecomposable projective modules of $B:=\Lambda_{\ul{G},\ul{m}}$ are uniserial apart from $e_2B$, whereas $\rad e_2B/\soc e_2B$ is a direct sum of two uniserial modules $U\oplus V$.
The Loewy filtration is described by the cyclic ordering and the multiplicity of the associated vertex, which gives the following pictorial presentation of the Loewy structure of $B$:
\[
\begin{array}{ccccrclcc}
0 & & 1 & &  & 2&  & & 3 \\ 
1 & & 2 & & 0&  & 3& & 2 \\ 
2 &\oplus & 0 &\oplus & 1&  & 2&\oplus & 3 \\ 
0 & & 1 & &  &  & 3& & 2 \\
  & &   & &  & 2&  & & 3
\end{array} 
\]
This presentation gives a clear view of the structures of $U$ (uniserial with composition factor $S_0,S_1$) and $V$ (uniserial with Loewy filtration $S_3,S_2,S_3$).
\end{example}

For a symmetric Nakayama algebra with $n$ simple modules (up to isomorphism) and Loewy length $nm+1$, the corresponding Brauer tree is a star-shaped tree with the exceptional vertex of multiplicity $m$ lying at the centre.
Hence, sometimes we call them \emph{Brauer star algebras}.
Typical examples of Brauer tree algebras arise from modular group representation, namely, the so-called blocks of cyclic defect (i.e. blocks of finite representation type).
In the case of symmetric groups, the representation-finite blocks are Brauer tree algebras whose the underlying trees are just a line (with $p-1$ edges, where $p$=char$K$) and $\ul{m}\cong 1$.

As we have mentioned, a result of Rickard \cite{Ric} shows that an algebra derived equivalent to a symmetric Nakayama algebra is a Brauer tree algebra and every Brauer tree algebra is derived equivalent to a symmetric Nakayama algebra.
The derived equivalence class is uniquely determined by the tuple $(n,m)$, where $n$ is the number of simples and $m$ is the exceptional multiplicity.

Moreover, the stable equivalence class and derived equivalence class containing any given symmetric Nakayama algebra coincide \cite{Ric}.
The classification for this stable equivalence class is shown by \cite{GaRi}.
Since stable equivalence preserves stable Auslander-Reiten structure, each equivalence class is uniquely determined by a pair of positive integers $(n,m)$, so that any algebra in such a class has stable Auslander-Reiten quiver being $\mathbb{Z}A_{nm}/\langle \tau^n\rangle$ (the case for $(n,m)=(3,1)$ is shown in Example \ref{eg-AR}).

\medskip

The \emph{distance} between two vertices $u,v$ of a planar tree $\ul{G}$ is the number of edges in the (unique) path in $G$ whose endpoints are $u,v$.
Distance gives a natural bipartite structure on trees.
In particular, we say that two vertices \emph{have the same parity} if and only if their distance is even. \footnote{Formally speaking, one should define parity of a vertex being the distance of it from a chosen vertex (such as the exceptional vertex).  However, the parity of an individual vertex plays no role in this article, so we opt for this form of definition.}
This bipartite structure has some significance on the representation theory of the corresponding Brauer tree algebras (regardless of multiplicities); see the proposition below.

An indecomposable non-projective module over a Brauer tree algebra will be called a \emph{hook module} if it lies on a boundary (i.e. top row or bottom row) of the stable Auslander-Reiten quiver.
The terminology comes from the so-called string combinatorics of special biserial algebras, but we will not explain any more details here.
The following characterisation of hook modules is well-known; we include a proof for completeness.

\begin{proposition}\label{hookmod}
Let $B=\Lambda_{\ul{G},\ul{m}}\cong KQ_{\ul{G}}/I$ be a basic Brauer tree algebra so that $Q_{\ul{G}}$ and $I$ are given as in Definition \ref{def-BT}.  Then the following are equivalent:
\begin{enumerate}[label=\upshape(\arabic*)]
\item $M$ is a hook module over $B$;
\item $M$ is in the $\Omega$-orbit containing a simple module $S$ whose projective cover is uniserial;
\item $M\cong e_x B/(x\to y) B$ for some $(x|y)\in H_{\ul{G}}$;
\item $M\cong (x\to y)B$ for some $(x|y)\in H_{\ul{G}}$;
\item $M$ is a maximal uniserial module corresponding to some $v\in V$ in the sense that the multiplicity $[M:S_y]$ of a simple module $S_y$ corresponding to $y\in E$ is given by
\[ [M:S_y]=
\begin{cases}
m_v & \mbox{ if $v$ is an endpoint of $y$;}\\
0   & \mbox{ otherwise.}
\end{cases}
\]
\end{enumerate}
Moreover, two hook modules $(x\to y)B, (w\to z)B$ are in the same $\tau$-orbit if and only if the vertices associated to $(x|y),(w|z)$ have the same parity.
\end{proposition}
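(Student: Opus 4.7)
The plan is to establish the equivalences via explicit description of the modules $(x\to y)B$ using the Brauer relations of Definition \ref{def-BT}, then to invoke the known structure of the stable Auslander--Reiten quiver of $B$ as $\mathbb{Z}A_{nm}/\langle\tau^n\rangle$ (from \cite{GaRi}) to tie these to the boundary condition (1). For (4) $\Leftrightarrow$ (5), the zero relation $(x\to y\to z) = 0$ forces $(x\to y)B$ to be uniserial, while the socle relation $\rho_{x,v}^{m_v} = \rho_{x,u}^{m_u}$ pins down its length: it cycles through the edges incident to the associating vertex $v$ of $(x|y)$ exactly $m_v$ times before terminating at $S_x$, yielding the composition factor data of (5). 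Conversely, a maximal uniserial around $v$ with top $S_z$ is forced to be $(\sigma_v^{-1}(z)\to z)B$.

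For (3) $\Leftrightarrow$ (4), I would decompose $\rad(e_xB) = U_u + U_v$ as the sum of the two maximal uniserials around the endpoints $u,v$ of $x$, meeting in $\soc(e_xB) = S_x$. The short exact sequence
\[ 0\to (x\to y)B \to e_xB \to e_xB/(x\to y)B \to 0 \]
then exhibits the quotient as the maximal uniserial around the endpoint $v'$ of $x$ opposite to the associating vertex of $(x|y)$, hence also of form (4) (explicitly, it is $(\sigma_{v'}^{-1}(x)\to x)B$); this further shows that the class of hook modules from (3) and (4) is closed under $\Omega^{\pm 1}$. For (2) $\Leftrightarrow$ (3), $e_xB$ is uniserial iff one of $U_u,U_v$ reduces to $S_x$, i.e., iff $x$ is a leaf whose leaf endpoint has multiplicity $1$---such a leaf always exists because the exceptional vertex is unique. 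In that case $S_x = e_xB/(x\to y)B$ itself lies in class (3), so its $\Omega$-orbit is contained in the hook class by the closure established above.

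For (1) $\Leftrightarrow$ (4), I would count: the stable AR quiver $\mathbb{Z}A_{nm}/\langle\tau^n\rangle$ has exactly $2n$ boundary vertices (top and bottom rows), while $|H| = \sum_v \operatorname{val}(v) = 2|E| = 2n$ produces that many modules of form (4), pairwise non-isomorphic (distinguished by their tops and composition factor data from (5)). The simple $S_x$ at a leaf is visibly on the bottom boundary, so its $\Omega$-orbit consists of boundary hooks; the counting then forces (4) to exhaust the boundary, simultaneously showing there is a single $\Omega$-orbit of hooks and thereby securing (2) $\Leftrightarrow$ (1) as well.

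For the moreover statement, the $\Omega$-formula extracted from the short exact sequence above sends a hook associated to $v$ to one associated to an adjacent vertex, so $\Omega$ flips parity and $\tau = \Omega^2$ preserves it. Each edge contributes exactly one hook to each parity (one per endpoint), giving $n$ hooks per parity and matching the size of each $\tau$-orbit on the boundary; hence the two parity classes must coincide with the two $\tau$-orbits. The main obstacle will be the careful bookkeeping of cyclic orderings $\sigma_v$ when verifying the $\Omega$-formula on hook modules, in particular handling the degenerate single-edge tree where $H_{\ul{G}}$ is defined by a separate convention; once that is straightened out, the counting argument makes the final identifications quite rigid.
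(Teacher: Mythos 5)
Most of your outline coincides with the paper's own argument: the explicit description of the modules in (3), (4), (5), the syzygy computation identifying $e_xB/(x\to y)B$ with a module of form (4) attached to the other endpoint of $x$, the choice of a leaf whose leaf endpoint has multiplicity one as the seed simple, and the parity bookkeeping (each edge contributes one hook per endpoint, $\tau=\Omega^2$ preserves parity) for the final statement. The gap is in how you reach condition (1). First, the claim that the leaf simple $S_x$ is ``visibly on the bottom boundary'' is exactly the point that requires proof: simples over a Brauer tree algebra are in general \emph{not} boundary vertices (e.g.\ the simple at edge $2$ in Example \ref{eg-BT1} is not a hook), and what makes the leaf simple special is the paper's argument via the almost split sequence \eqref{eq-AR}: since $P_S$ is uniserial, $\rad P_S/\soc P_S$ is indecomposable (or zero), so $\Omega(S)$ and $\Omega^{-1}(S)$ have a unique non-projective neighbour in the stable Auslander--Reiten quiver and hence lie on a boundary; then $S=\Omega(\Omega^{-1}(S))$ is a boundary vertex because $\Omega$ induces an automorphism of the stable quiver. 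Your sketch never supplies this seed fact.

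Second, the inference ``the counting then forces (4) to exhaust the boundary, simultaneously showing there is a single $\Omega$-orbit of hooks'' runs backwards. From ``the $\Omega$-orbit of $S_x$ lies on the boundary'' together with ``$|H_{\ul{G}}|=2n=$ number of boundary vertices'' you cannot conclude that all $2n$ modules of form (4) are boundary vertices: if $\Omega$ happened to preserve each boundary row, the orbit of $S_x$ would only be one row of size $n$, and nothing in the count would place the remaining hooks on the boundary or force the other row to consist of hooks. What closes the argument is that the $\Omega$-walk on $H_{\ul{G}}$ (the clockwise Green's walk of \cite{Gre}, cf.\ Lemma \ref{lem-GW-resol}) is a \emph{single} closed walk of length $2n$ for a tree; this transitivity is an input to the counting, not a consequence of it. (Your count also tacitly excludes the degenerate case $B\cong K[X]/(X^2)$, where the two symbols of $H_{\ul{G}}$ give isomorphic hooks and the boundary has a single vertex, so the $2n$-versus-$2n$ comparison fails, although the proposition is trivial there.) Once you add the AR-sequence argument for the seed and the single-cycle property of Green's walk (or some other proof that every module of form (4) is a boundary vertex), the remainder of your plan, including the identification of the two parity classes with the two $\tau$-orbits, goes through as in the paper.
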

\begin{proof}
First we claim that if $S$ is a simple module whose projective cover $P_S$ is uniserial, then $\Omega(S)$ and $\Omega^{-1}(S)$ lie on a boundary of the stable Auslander-Reiten quiver (i.e. they are hook modules).
This can be seen using the almost split sequence
\begin{align}\label{eq-AR}
0\to \Omega(S) \to P_S\oplus \rad P/\soc P \to \Omega^{-1}(S)\to 0.
\end{align}

Since $\Omega$ is a stable auto-equivalence, it induces an automorphism of the stable Auslander-Reiten quiver, which means that the $\Omega$-orbit of a hook module must also be hook module.
This gives the equivalence between (1) and (2).

(2)$\Leftrightarrow$(3):  This is just a direct (inductive) calculation starting with $M=S$ for $P_S$ uniserial.
For a module $M$ of the form $e_x B/\alpha B$ for some $\alpha\in Q_1$, clearly $\Omega(M)=\alpha B$.
Let $\beta$ be the unique arrow so that its source $s(\beta)$ is the same as the target $t(\alpha)$ of $\alpha$ with $\alpha\beta=0$ in $KQ/I$.
Now the claim follows since $e_{t(\alpha)}B/\beta B \cong \alpha B$ via left-multiplying $\alpha$.

(3)$\Leftrightarrow$(4): As explained in the previous part,  $e_xB/(x\to y) B\cong (z\to x)B$ for the unique $z$ so that the vertices associated to $(x|y)$ and $(z|x)$ have different parity (equivalently, $(z\to x\to y)\in I$).

(3)$\Leftrightarrow$(5) is clear by construction of $\Lambda_{\ul{G},\ul{m}}$.

For the final statement, recall $\Omega^2\cong \tau$ for symmetric algebras, so the claim follows from the fact that $\Omega((x\to y)B)\cong (y\to z)B$ implies the vertices associated to $(x|y)$ and $(y|z)$ have different parity.
\end{proof}

For $(x|y)\in H$, we denote by $M(x,y)$ the corresponding hook module $(x\to y)B$.
Note that in the case when $\ul{G}$ has only a single edge $x$ (i.e. $B\cong K[X]/(X^{\ell+1})$ for some $\ell\geq 1$), we have $\{M(x,x), M(\overline{x},\overline{x})\}=\{\rad B\cong K[X]/(X^\ell),\soc B\cong K\}$.

\section{Representation-finite gendo-symmetric biserial algebras}\label{sec-RFGSB}
Let $(\ul{G},\ul{m})$ be a Brauer tree.

We are going to use a certain subset $W$ of $H=H_{\ul{G}}$ to define a new algebra $\Gamma_{\ul{G},\ul{m}}^W$ by taking a quotient of a (larger) Brauer tree algebra.
We will show that this algebra is a gendo-Brauer tree algebra $\Lambda_{\ul{G},\ul{m}}$, and eventually give a proof of \hyperref[thmA]{Theorem A} (Theorem \ref{thm-special-gBT}).
We will also explain which generators of a symmetric Nakayama algebra induce gendo-symmetric Nakayama algebras.

\begin{definition}
A subset $W$ of $H$ is called \emph{special} if there does not exist $(x|y),(y|z)\in W$ so that  $(x\to y\to z)\in I$ (or equivalently, their associating vertices have different parity).

We say that a subset $W$ of $H$ has \emph{pure parity} if all the vertices associated to elements of $W$ have the same parity.
Note that if $W$ has pure parity, then it is special.
We also regard the empty set $W=\emptyset$ as a special subset of $H$ of pure parity.
\end{definition}
Fix a special subset $W$ of $H$.
Define a Brauer tree $(\ul{G}^W,\ul{m}^W)=(G^W,\sigma^W,\ul{m}^W)$ by enlarging $(\ul{G},\ul{m})$ as follows.

All of the new vertices of $\ul{G}^W$ will have valency 1 and multiplicity 1.
The new edges of $\ul{G}^W$ will all be leaves  corresponding to elements of $W$; the endpoints of the (new) leaf corresponding to $(x|y)\in W$ is a new vertex and the vertex associated to $(x|y)$.
The cyclic ordering on $\ul{G}^W$ is given by inserting $(x|y)$ in between $x$ and $y$.

We can visualise the data $(\ul{G},W)$ in a similar way as Brauer trees by showing the planar tree $\ul{G}^W$ with the following modification:
\begin{itemize}
\item Edges in $\ul{G}$ are shown in solid lines with $\circ$ at the endpoints (as usual).
\item Edges in $W$ (i.e. in $\ul{G}^W\setminus \ul{G}$) are shown in solid lines with ``propagation", whereas its attaching leaf vertex will \emph{not} be shown.
\end{itemize}
We will show two examples in \ref{eg-BT2} and \ref{eg-naka}.
In the case when we do not care whether an edge is in $W$ or in $\ul{G}$, then we will present it using a dashed line.

\begin{definition}\label{def-special-gBT}
$\Gamma_{\ul{G},\ul{m}}^W$ is the quotient of the Brauer tree algebra $C:=\Lambda_{\ul{G}^W,\ul{m}^W}$ by the sum of socles of $e_{(x|y)}C$ over all $(x|y)\in W$.
\end{definition}

The algebra $\Gamma_{\ul{G},\ul{m}}^W$ can be presented by the following generators and relations.
Let $Q_{\ul{G},W}$ be the quiver obtained from $Q_{\ul{G}^W}$ by removing (loop) arrows $(x|y)\to (x|y)$ for all $(x|y)\in W$.
Define an (inadmissible) ideal $I$ of the path algebra $KQ_{\ul{G},W}$ generated by:

\begin{itemize}[itemsep=5pt]
\item $(x\to y\to z)$ (resp. $((x|y)\to y\to z)$ and $(x\to y\to (y|z))$) if $x$ and $z$ are emanated from different endpoints of $y$, for all edges $x,y,z$ in $\ul{G}$;

\item $(\rho_{x,u})^{m_u}-(\rho_{x,v})^{m_v}$ for any edge $x$ in $\ul{G}$ with endpoints $u,v$;

\item $\rho_{(x|y)}^{m_v}$ for any $(x|y)\in W$ associated to a vertex $v$ in $\ul{G}$.
\end{itemize}

Then $\Gamma_{\ul{G},\ul{m}}^W$ is isomorphic to $KQ_{\ul{G},W}/I$.

\begin{lemma}\label{RF-Bis}
$\Gamma_{\ul{G},\ul{m}}^W$ is a representation-finite biserial QF-2 and QF-3 algebra.
\end{lemma}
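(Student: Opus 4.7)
The strategy is to view $\Gamma := \Gamma_{\ul{G}, \ul{m}}^W$ as a very small quotient of the Brauer tree algebra $C := \Lambda_{\ul{G}^W, \ul{m}^W}$. Since $C$ is symmetric, one has $\rad C \cdot \soc C = 0 = \soc C \cdot \rad C$, and hence each $\soc(e_{(x|y)}C)$ for $(x|y) \in W$ is a one-dimensional two-sided ideal of $C$. Thus $J := \sum_{(x|y)\in W}\soc(e_{(x|y)}C) = \bigoplus_{(x|y)\in W}\soc(e_{(x|y)}C)$, and $\dim\Gamma = \dim C - |W|$. Representation-finiteness then follows immediately: any indecomposable $\Gamma$-module pulls back along the surjection $C\twoheadrightarrow\Gamma$ to an indecomposable $C$-module, and $C$ is representation-finite as a Brauer tree algebra.

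For biseriality I use the explicit presentation $\Gamma\cong KQ_{\ul{G}, W}/I$ recorded right after Definition~\ref{def-special-gBT}. The quiver $Q_{\ul{G}, W}$ has at most two arrows in and two arrows out at each vertex (inherited from the Brauer quiver $Q_{\ul{G}^W}$), and the Brauer-style relations ensure that for each arrow $\alpha$ there is at most one arrow $\beta$ with $\alpha\beta\notin I$ (the one continuing the walk around the associating vertex). Thus $\Gamma$ is special biserial in the sense of Definition~\ref{def-biserial}, and in particular biserial.

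For QF-2 and QF-3 I analyse the indecomposable projectives in two cases. If $x$ is an original edge of $\ul{G}$, then $e_x C \cdot J \subseteq \rad C \cdot \soc C = 0$, so $e_x \Gamma = e_x C$ as $\Gamma$-modules; this module inherits the simple socle $S_x$ from $C$, and, being the injective envelope of $S_x$ in $C$ annihilated by $J$, remains so in $\Gamma$, making it projective-injective. If $(x|y) \in W$, then $e_{(x|y)} C$ is uniserial (the leaf edge $(x|y)$ in $\ul{G}^W$ has only one non-trivial walk, around the associating vertex), and $e_{(x|y)}\Gamma$ is obtained by removing its socle, leaving a new simple socle at the penultimate Loewy layer; this settles QF-2. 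For QF-3, note that in both cases $\soc(e_i\Gamma)$ is a simple of the form $S_x$ with $x$ an original edge, so the injective envelope $I^0(\Gamma)$ decomposes as a direct sum of projective-injective modules $e_x\Gamma$, giving $\mathrm{domdim}(\Gamma) \geq 1$. The main technical delicacy is the identification $e_x \Gamma \cong e_x C$ together with the injectivity in $\Gamma$, both of which follow cleanly from the symmetric-algebra identity $\rad \cdot \soc = 0$ applied at appropriate places.
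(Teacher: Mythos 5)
Your proposal is correct and follows essentially the same route as the paper: representation-finiteness via the quotient $C=\Lambda_{\ul{G}^W,\ul{m}^W}\twoheadrightarrow \Gamma_{\ul{G},\ul{m}}^W$, (special) biseriality read off from the quiver presentation, and QF-2/QF-3 from the simple socles of the indecomposable projectives, with your annihilator/coinduction argument merely making explicit the paper's implicit point that $e_x\Gamma_{\ul{G},\ul{m}}^W\cong e_xC$ is projective-injective for each original edge $x$ and receives the socle of every projective non-injective. (One cosmetic quibble: $e_xC\cdot J=0$ follows from $CJ\subseteq J$ together with $e_xJ=0$ by orthogonality of idempotents, not from the stated containment $e_xC\cdot J\subseteq \rad C\cdot \soc C$, since $e_xC\not\subseteq\rad C$.)
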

\begin{proof}
$\Gamma_{\ul{G},\ul{m}}^W$ being QF-2 follows from the construction that any indecomposable projective non-injective $\Gamma_{\ul{G},\ul{m}}^W$-module is necessarily uniserial.

It is also QF-3 because the simple socle of an indecomposable projective non-injective module is also a socle of an indecomposable projective-injective module.

Representation-finiteness follows from the fact that Brauer tree algebras are representation-finite and representation-finiteness is preserved by quotienting out an ideal $I$ of an algebra $A$, since $\mod A/I$ is a full subcategory of $\mod A$.
It is clear that the algebra is special biserial, then the claim follows from the fact that representation-finite special biserial is the same as representation-finite biserial.
\end{proof}

\begin{example}
\label{eg-naka}
Let $B=\Lambda_{\ul{G},\ul{m}}$ be a symmetric Nakayama algebra which has $n=5$ simple modules (up to isomorphism) and Loewy length $w=2 \cdot 5+1=11$.
The set $W=\{(0|1),(2|3)\}$ has pure parity.  The combinatorial data can be presented as follows:
\begin{align}
(\ul{G},\ul{m}): \xymatrix@C=25pt@R=20pt{
 & \circ\ar@{-}[d]_{4} & \circ\ar@{-}[ld]_{3} \\
\circ\ar@{-}[r]^{0} & \bullet & \\
 & \circ \ar@{-}[u]^{1} & \circ \ar@{-}[lu]_{2}
} \qquad\qquad (\ul{G},\ul{m},W):
\xymatrix@C=35pt@R=25pt{
 & \circ\ar@{-}[d] & \circ\ar@{-}[ld] \\
\circ\ar@{-}[r] & \bullet & \ar@{{}{+}{}}[l]_{(2|3)} \\
\ar@{{}{+}{}}[ru]^{(0|1)} & \circ \ar@{-}[u] & \circ \ar@{-}[lu]
}\notag 
\end{align}
The special gendo-Brauer tree algebra $A:=\Gamma_{\ul{G},\ul{m}}^W$ is also a Nakayama algebra with Kupisch series $[15,14,15,15,14,15,15]$.
In fact, if we take $M= \rad e_0 B\oplus \rad e_2 B$, then $A\cong \End_B(B\oplus M)$.
\end{example}

\begin{example}\label{eg-BT2}
Consider the Brauer tree $(\ul{G},\ul{m})$ given in Example \ref{eg-BT1}.
The combinatorial data $(\ul{G},\ul{m},W)$ with $W=\{(2|0),(2|3)\}$ is visualised as follows:
\[
\xymatrix@R=20pt@C=28pt{
 &\circ \ar@{-}[d]_{0} & \ar@{{}{+}{}}[ld]^<(.2){(2|0)}& \circ&\\
\circ\ar@{-}[r]_{1} & \circ \ar@{-}[rr]_{2} & &\bullet \ar@{-}[u]^{3} \ar@{{}{+}{}}[r]^{(2|3)} & }
\]
Note that, following the traditional convention in Brauer trees, the exceptional vertex is represented by the black node, in contrast to the other white nodes (vertex with multiplicity 1).
It is clear from this visualisation that $W$ is special but not of pure parity.
The Loewy structure of the algebra $\Gamma_{\ul{G},\ul{m}}^W$ is:
\[
\begin{array}{ccccccccccccc}
     & &     & &     & &     & 2&     & &(2|3)& &3   \\ 
(2|0)& & 0   & & 1   & &     &  &(2|3)& & 3   & &2   \\ 
0    & & 1   & & 2   & &(2|0)&  & 3   & & 2   & &(2|3)\\
1    &\oplus & 2   &\oplus &(2|0)&\oplus &0&  & 2   &\oplus &(2|3)&\oplus & 3   \\
2    & &(2|0)& & 0   & &    1&  &(2|3)& & 3   & & 2   \\
     & & 0   & & 1   & &     &  & 3   & & 2   & &(2|3)\\
     & &     & &     & &     & 2&     & &     & &3
\end{array} 
\]
Let $e$ be the idempotent given by $e_0+e_1+e_2+e_3$.
Roughly speaking, the effect of applying the Schur functor $(-)e$ to $\Gamma_{\ul{G},\ul{m}}^W$ is to remove all the composition factors labelled by $(2|0)$ and $(2|3)$.
The resulting diagram is then the same as the Loewy diagram of the generator $\Lambda_{\ul{G},\ul{m}}\oplus M(2,0)\oplus M(2,3)$ over $\Lambda_{\ul{G},\ul{m}}$.
\end{example}

We warn the reader that we do not, and will not, define the projective non-injective $\Gamma_{\ul{G},\ul{m}}^W$-module as a ``hook module", even though it is isomorphic to a module generated by an arrow, i.e. $e_{(x|y)}\Gamma_{\ul{G},\ul{m}}^W\cong (x\to (x|y))\Gamma_{\ul{G},\ul{m}}^W$.
We will only talk about hook modules over Brauer tree algebras.

\begin{proposition}\label{PTalg}
The algebra $\Gamma_{\ul{G},\ul{m}}^W$ associated to a Brauer tree $(\ul{G},\ul{m})$ and a special subset $W$ is a (non-frozen) algebra of partial triangulation in the sense of \cite{Dem}.
\end{proposition}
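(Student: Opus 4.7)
The plan is to recognise $(\ul{G},\ul{m},W)$ as the combinatorial input to Demonet's construction and then verify, by direct comparison of quivers and relations, that the algebra Demonet assigns to it coincides with $\Gamma_{\ul{G},\ul{m}}^W$. Concretely, I would first build the surface model: embed the planar tree $\ul{G}$ in a disc $D$ so that the cyclic ordering of edges at each vertex realises the counter-clockwise orientation, with vertices of $\ul{G}$ becoming interior marked points carrying the multiplicities $\ul{m}$. For each $(x|y)\in W$, draw an extra arc emanating from the vertex associated to $(x|y)$, placed between $x$ and $y$ in the cyclic order, and terminating at a new marked point on the boundary of $D$. The specialness of $W$ is precisely what guarantees that these propagated arcs can be drawn without mutually crossing and without identifying endpoints in a way that would violate the partial triangulation axioms.

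Next, I would match the quivers. Demonet's recipe produces a quiver whose vertices are the arcs of the partial triangulation and whose arrows correspond to pairs of consecutive arcs at each marked point. Under our construction this is exactly $Q_{\ul{G},W}$: cyclic pairs at interior marked points in $\ul{G}$ reproduce all the arrows $x\to y$ for $(x|y)\in H_{\ul{G}}$; the arcs attached to $W$ contribute the extra arrows $x\to(x|y)\to y$; and crucially no loop $(x|y)\to(x|y)$ appears, because the other endpoint of the arc labelled $(x|y)$ is a univalent boundary marked point. Comparing relations is then a direct unpacking: Demonet's cyclic relation at an interior marked point $v$ of multiplicity $m_v$ produces $\rho_{x,u}^{m_u}=\rho_{x,v}^{m_v}$ along an interior edge $x$ with endpoints $u,v$, and yields the nilpotency $\rho_{(x|y)}^{m_v}=0$ when the other endpoint lies on the boundary (where there is no cyclic relation to identify against); Demonet's zero relations at each marked point reproduce the gentle-type vanishing $(x\to y\to z)$ between arrows pointing away from distinct marked points. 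The non-frozen adjective is the observation that no arc in our partial triangulation is declared frozen, which matches the fact that $\Gamma_{\ul{G},\ul{m}}^W$ carries no frozen idempotents.

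I expect the main obstacle to be careful bookkeeping at the exceptional vertex and at the new arcs attached via $W$. In particular, I need to verify that the exceptional multiplicity translates correctly into Demonet's puncture-weight datum, and that speciality of $W$ is exactly the right hypothesis: a non-special $W$ would force two arcs of the partial triangulation to meet at a marked point with incompatible cyclic positions, producing a combinatorial object falling outside Demonet's framework. Once these two consistency checks are in place, the remaining matching of quivers and relations is essentially clerical.
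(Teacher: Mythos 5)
Your proposal is correct and follows essentially the same route as the paper: embed the enlarged tree $\ul{G}^W$ in a disc with the univalent endpoints of the new $W$-edges pushed onto the boundary, and then verify that Demonet's quiver and relations for this partial triangulation (with $\lambda\equiv 1$ and weights given by the multiplicities) coincide with the presentation of $\Gamma_{\ul{G},\ul{m}}^W$. One minor caveat: since the complement of an embedded tree in the disc is connected and meets the boundary, the extra arcs can be drawn to boundary marked points for \emph{any} subset $W$, so speciality is not really what guarantees the embedding or keeps you inside Demonet's framework — it is a hypothesis built into the definition of $\Gamma_{\ul{G},\ul{m}}^W$ rather than a geometric constraint in this argument.
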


\begin{proof}
Since $\ul{G}$ is a planar tree, it can be embedded into a (2-dimensional) disc $\Sigma$ with interior marked points.
The vertices of $\ul{G}$ then become interior marked points of $\Sigma$.
In particular, $\ul{G}$ is a partial triangulation of the marked surface $(\Sigma,V)$; see, for example, the top two pictures in Figure \ref{fig-embed}, and \cite{Dem} for details.

Consider the enlarged Brauer tree $(\ul{G}^W,\ul{m}^W)$ associated to a special subset $W$ of $H_{\ul{G}}$.
By construction, each (new) edge $x$ in $\ul{G}^W$ (not in $\ul{G}$) has a unique endpoint, say $v_x$, of valency 1.
We then deform the marked surface by ``pushing" the marked point $v_x$ onto the boundary; see the bottom two pictures in Figure \ref{fig-embed} for the case when $(\ul{G},\ul{m},W)$ is given by Example \ref{eg-BT2}.
In other words, the new marked surface consist of the same surface (a disc) with interior marked points (corresponding to vertices in $\ul{G}$) and marked points on the boundary (corresponding to $v_x$ for a new edge $x$).
Let $\mathbb{M}$ be the set of marked points constructed in this process.
Clearly $\ul{G}^W$ is still a partial triangulation on the marked surface $(\Sigma,\mathbb{M})$.

Following the notation in \cite{Dem}, for each $M\in \mathbb{M}$, we define $\lambda_M=1$ and $m_M$ as the multiplicity of the corresponding vertex in $\ul{G}^W$.
The datum $(\Sigma,\mathbb{M},\ul{G}^W,(\lambda_M)_{M\in\mathbb{M}}, (m_M)_{M\in\mathbb{M}})$ defines the non-frozen algebra $A$ of partial triangulation $\ul{G}^W$ on $(\Sigma,\mathbb{M})$ in the sense of \cite{Dem}.
One only needs to check that the quiver and defining relations of $\Gamma_{(\ul{G},\ul{m})}^W$ are exactly those of $A$.
\end{proof}

\begin{figure}[!htbp]
\begin{tikzpicture}[scale=0.5, outer sep=0]
\node (v1) at (0,7.5) {$\circ$};
\node (v3) at (3,7.5) {$\circ$};
\node (v4) at (3,10) {$\circ$};
\node (v5) at (7.5,7.5) {$\circ$};
\node (v7) at (7.5,10) {$\circ$};
\draw  (v1) edge (v3);
\draw  (v3) edge (v4);
\draw  (v5) edge (v3);
\draw  (v5) edge (v7);

\draw [black, postaction={decorate,
	decoration={markings,  mark=between positions 0.1 and 0.9 step 0.15 with {\arrow {|};}
        }}] (v3) -- (5.5,10);
\draw [black, postaction={decorate,
	decoration={markings,  mark=between positions 0.1 and 0.9 step 0.15 with {\arrow {|};}
        }}] (v5) -- (11,7.5);
\node at (5.5,9) {$\scriptstyle(2|0)$};
\node at (10,8) {$\scriptstyle(2|3)$};
\node at (0.5,11) {$(\underline{G},W)$};

\node (u1) at (16.5,7.5) {$\circ$};
\node (u3) at (19.5,7.5) {$\circ$};
\node (u4) at (19.5,10) {$\circ$};
\node (u5) at (24,7.5) {$\circ$};
\node (u7) at (24,10) {$\circ$};
\draw  (u1) edge (u3);
\draw  (u3) edge (u4);
\draw  (u5) edge (u3);
\draw  (u5) edge (u7);

\node (ux1) at (22,11.5) {};
\node (ux2) at (27.5,7.5) {};
\draw [fill=black] (ux1) circle (0.1);
\draw [fill=black] (ux2) circle (0.1);
\draw (u3) edge (ux1);
\draw (u5) edge (ux2);
\draw [rounded corners=20pt] (15.5,11.5) rectangle (27.5,6);
\node at (13,11) {embed:};

\node (v1) at (0,15) {$\circ$};
\node (v3) at (3,15) {$\circ$};
\node (v4) at (3,17.5) {$\circ$};
\node (v5) at (7.5,15) {$\circ$};
\node (v7) at (7.5,17.5) {$\circ$};
\draw  (v1) edge (v3);
\draw  (v3) edge (v4);
\draw  (v5) edge (v3);
\draw  (v5) edge (v7);

\node at (1.5,14.5) {$\scriptstyle 1$};
\node at (5,14.5) {$\scriptstyle 2$};
\node at (8,16.5) {$\scriptstyle 3$};
\node at (2.5,16.5) {$\scriptstyle 0$};
\node at (0.5,18) {$\underline{G}$};

\node (uu1) at (16.5,15) {$\circ$};
\node (uu3) at (19.5,15) {$\circ$};
\node (uu4) at (19.5,17.5) {$\circ$};
\node (uu5) at (24,15) {$\circ$};
\node (uu7) at (24,17.5) {$\circ$};
\draw  (uu1) edge (uu3);
\draw  (uu3) edge (uu4);
\draw  (uu5) edge (uu3);
\draw  (uu5) edge (uu7);

\draw [rounded corners=20pt] (15.5,19) rectangle (27.5,13.5);
\node at (13,18) {embed: };
\end{tikzpicture}
\caption{Embedding (special gendo-)Brauer tree in a disc.}\label{fig-embed}
\end{figure}
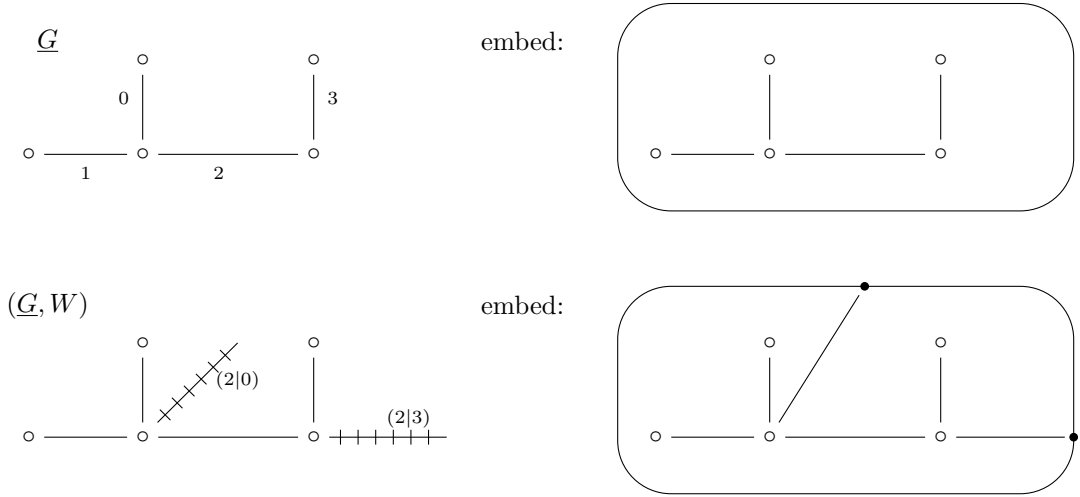

\begin{lemma}\label{special-gBT}
Let $A$ be the algebra $\Gamma_{\ul{G},\ul{m}}^W$, and $B$ be the basic Brauer tree algebra $\Lambda_{\ul{G},\ul{m}}$.
Then $A$ is isomorphic to $\End_B(B\oplus M)$, where $M$ is a direct sum of hook modules $M(x,y)=(x\to y)B$ over $(x|y)\in W$.
In particular, $A$ is a gendo-Brauer tree algebra.
\end{lemma}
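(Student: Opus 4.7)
The plan is to directly compare the quiver-with-relations presentations of $A := \Gamma_{\ul{G},\ul{m}}^W$ and $A' := \End_B(B\oplus M)$; both are basic and connected. I begin by listing the indecomposable summands of $N := B\oplus M$: the projectives $\{e_x B : x\in E(\ul{G})\}$ together with the hook modules $\{M(x,y) : (x|y)\in W\}$. These are pairwise non-isomorphic since hook modules are non-projective by Proposition~\ref{hookmod}, giving a natural bijection between the simple $A'$-modules and the vertices of $Q_{\ul{G},W}$.

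The next step is to match the arrows. For $(x|y)\in H_{\ul{G}}\setminus W$, the Brauer quiver arrow $(x\to y)$ still induces an irreducible morphism (via left-multiplication) between the corresponding projective summands of $N$. For $(x|y)\in W$, this morphism now factors through $M(x,y)$ as the surjection $\pi_{x,y}\colon e_y B\twoheadrightarrow M(x,y)$, $e_y\mapsto (x\to y)$, followed by the inclusion $\iota_{x,y}\colon M(x,y)\hookrightarrow e_x B$; these new irreducible morphisms contribute the arrows $x\to (x|y)$ and $(x|y)\to y$. Specialness of $W$ guarantees that no further irreducible morphisms arise between $M(x,y)$ summands, so the Gabriel quiver of $A'$ coincides with $Q_{\ul{G},W}$.

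Finally, I verify the three families of relations. The zero relations (including variants routed through $W$-vertices) hold because each such composition in $A'$ equals left-multiplication by $(x\to y\to z)\in B$, which vanishes precisely when $x,z$ lie at different endpoints of $y$. The Brauer socle relations $\rho_{x,u}^{m_u}=\rho_{x,v}^{m_v}$ persist because each detour $x\to (x|y)\to y$ in $Q_{\ul{G},W}$ composes in $A'$ to left-multiplication by $(x\to y)$, so traversing a cycle in $Q_{\ul{G},W}$ reproduces the corresponding cycle in $B$. The remaining relation $\rho_{(x|y)}^{m_v}=0$ is a direct calculation: unpacking the definitions, $\rho_{(x|y)}$ shifts the uniserial Loewy filtration of $M(x,y)$ down by the valency of $v$ in $G$, while by Proposition~\ref{hookmod}(5) the module $M(x,y)$ has Loewy length $m_v$ times the valency of $v$, so the $m_v$-th iterate annihilates it. This produces a surjection $A\twoheadrightarrow A'$, which is an isomorphism by a dimension count matching the explicit Loewy diagrams of the indecomposable projectives on both sides; the concluding assertion that $A$ is gendo-Brauer tree follows immediately. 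The main obstacle will be the careful identification of the Gabriel quiver of $A'$ — in particular, ruling out spurious irreducible morphisms between distinct summands $M(x,y)$ — which is precisely where specialness of $W$ plays its essential role.
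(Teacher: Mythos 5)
Your proposal is correct and follows essentially the same route as the paper's proof: an analysis of the morphisms between the summands of $B\oplus M$ (with specialness of $W$ ruling out maps of the form $M\to\Omega(M)$ not factoring through projectives), identification of the generators $\iota_{(x|y)},\pi_{(x|y)}$ and the left-multiplication maps, a surjection $KQ_{\ul{G},W}/I\twoheadrightarrow \End_B(B\oplus M)$ obtained by checking the three families of relations, and a concluding dimension count (the paper phrases this as comparing Cartan matrices, which is the same as your Loewy-diagram comparison). No essential difference or gap to report.
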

\begin{proof}
For simplicity, we denote by $P_x$ the indecomposable projective $B$-module $e_xB$.
We also denote by $\mathrm{id}_x$ and by $\mathrm{id}_{(x|y)}$ the identity maps on $P_x$ and on $M(x,y)$ respectively.

We start by giving the following remark which will be used several times in the sequel.
If $(x|y)\in H_{\ul{G}}$ is associated to a vertex $v$, then there is a unique element $(x|z)$ (resp. $(z|y)$) in $H_{\ul{G}}$ with $z\neq y$ which is associated to a different endpoint of $x$ (resp. $y$).
It then follows from the assumption on $W$ that if $M(x,y)$ is a direct summand of $M$, then there is no direct summand of $M$ of the form $M(z,y)$ and $M(x,z)$.

We now analyse homomorphisms between the indecomposable summands of $B\oplus M$.
It follows from Proposition \ref{hookmod} (4) (or simply by observing the Loewy structure) that 
\[
\dim \Hom_B(P_z, M(x,y)) = [M(x,y):S_z] = \begin{cases}
m_v & \mbox{ if $v$ is an endpoint of $z$;}\\
0 & \mbox{ otherwise,}
\end{cases}
\]
where $v$ is the vertex associated to $(x|y)\in W$.

Note that $\End_B(M(x,y))\cong K[\alpha]/(\alpha^{m_v})$ for all $(x|y)$ associated to $v$ where $\alpha$ represents the map given by left-multiplying the cycle $\rho_{x,v}$.
Likewise, if $u,u'$ are endpoints of an edge $x$ with $m_u\geq m_{u'}$, then $\End_B(P_x)\cong K[\beta]/(\beta^{m_u+1})$ with $\beta$ representing the left-multiplication map $\rho_{x,u}\cdot-$. 

Thus, the space $\Hom_B(P_y,M(x,y))$ is uniserial as left $\End_B(M(x,y))$- and as right $\End_B(P_x)$-module.
The generator of this uniserial left/right module is given by left-multiplying $(x\to y)$ on $P_y$.
We denote this surjective map by $\pi_{(x|y)}$.

Note also that $\pi_{(x|y)}$ is isomorphic (equal up to composing isomorphisms) to the projective cover (map) $P_y\to M(x,y)\cong P_y/(y\to z)B$, where $(y\to z)$ is the unique arrow in $Q_{\ul{G}}$ so that $(y|z)$ is associated to the other endpoint of $y$.

Dually, $\dim \Hom_B(P_z,M(x,y))$ is given by the same formula as $\dim \Hom_B(M(x,y),P_z)$, using the fact that $B$ is a symmetric algebra.
The uniserial left $\End_B(M(x,y))$- (resp. right $\End_B(P_z)$-) module is generated by the canonical embedding $\iota_{(x|y)}:M(x,y)\to P_x$.

The module $M(x,y)$ is uniserial with radical filtration given by going around the cyclic ordering around an endpoint $v$ of $x$ for $m_v$ times, starting with $x$.
It is then easy to see that we have 
\[
\dim \Hom_B(M(w,z), M(x,y)) = \begin{cases}
m_v & \mbox{ if both $(x|y)$ and $(w|z)$ are associated to $v$;}\\
0 & \mbox{ otherwise.}
\end{cases}
\]
If the space $\Hom_B(M(w,z),M(x,y))$ is non-zero, then it is generated, as left $\End_B(M(w,z))$ (resp. right $\End_B(M(x,y))$) module, by the map $\pi_{(x|y)}\circ((y\leadsto w)\cdot-)\circ\iota_{(w|z)}$, where $(y\leadsto w)$ is the shortest simple path from $y$ to $w$ in $Q_{\ul{G}}$.

If $(x|y)$ is not in $W$, then the irreducible map $P_y\to P_x$, which is given left-multiplying $(x\to y)$, remains irreducible in $\End_B(B\oplus M)$; otherwise, it is equal to $\iota_{(x|y)}\pi_{(x|y)}$.

From the above analysis, we have obtained the irreducible maps in $\End_B(B\oplus M)$.
This allows us to define a surjective map $\phi: KQ_{\ul{G},W}\to \End_B(B\oplus M)$ given by
\begin{align}
e_x & \mapsto \mathrm{id}_{x} \notag \\
e_{(x|y)} & \mapsto \mathrm{id}_{(x|y)} \notag\\
(x\to y) & \mapsto (x\to y)\cdot- \mbox{ for all $x,y\in E$ with $(x|y)\notin W$}\notag\\
((x|y)\to y) & \mapsto \pi_{(x|y)} \notag\\
(x\to (x|y)) & \mapsto \iota_{(x|y)} \notag 
\end{align}

It is not difficult to see that $I\subset \ker\phi$.
To finish the proof, we can simply count the dimension of the $K$-vector spaces on both sides of $\phi$.
Indeed, we have already obtained every entry of the Cartan matrix of $\End_B(B\oplus M)$.
It is routine to check that this is the same as the Cartan matrix of $\Gamma_{\ul{G},\ul{m}}^W$, which completes the proof.
\end{proof}

If $B$ is a Brauer tree algebra associated to $(\ul{G},\ul{m})$, and $M$ is a (possibly zero) direct sum of hook $B$-modules, then we say that $M$ is \emph{special} if the additive generator of $\add(M)$ is isomorphic to $\bigoplus_{(x|y)\in W}M(x,y)$ for some special subset $W$.

We call a gendo-Brauer tree algebra \emph{special} if it is Morita equivalent to $\Gamma_{\ul{G},\ul{m}}^W$ for some special subset $W$ of $H_{\ul{G}}$.
This is the same as saying that it is isomorphic to an endomorphism ring of a direct sum of a progenerator with a special direct sum of hook modules over a Brauer tree algebra.

Since a special gendo-Brauer tree algebra $A$ is uniquely determined by the datum $(\ul{G},\ul{m},W)$ up to Morita equivalence, we can define the \emph{exceptional multiplicity} of a $A$ as the exceptional multiplicity of the Brauer tree $(\ul{G},\ul{m})$.

Our next step is to show that all representation-finite gendo-symmetric biserial algebras are special gendo-Brauer trees.
The key lemma needed to obtain this result is the following.

\begin{lemma}\label{badgenerator}
Let $B$ be a Brauer tree algebra associated to $(\ul{G},\ul{m})$ and $M$ an indecomposable $B$-module.
\begin{enumerate}[label=\upshape(\arabic*)]
\item If $M$ is not a hook module, then $\End_B(B\oplus M)$ is not special biserial.
\item If $M$ is a hook module, then $\End_B(B\oplus M\oplus \Omega(M))$ is not special biserial.
\end{enumerate}
\end{lemma}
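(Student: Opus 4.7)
The plan is to show both parts by locating a vertex of the quiver of the candidate endomorphism algebra $A$ where the \emph{special biserial pairing condition} fails. Since representation-finite biserial coincides with special biserial by \cite{SkoWas}, violating the condition---at each arrow $\alpha$, at most one arrow $\beta$ has $\alpha\beta \notin I$, and dually---suffices. The key computational tool is that arrows of the quiver of $A = \End_B(T)$ correspond to irreducible morphisms in the additive category $\add(T)$, which differ from irreducible morphisms in $\mod B$ because the extra factorisations through the non-projective summands of $T$ must be quotiented out.

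For part (2), set $M = M(x,y)$ and $\Omega M = M(y,z)$; by the final sentence of Proposition \ref{hookmod}, the vertices $v$ and $u$ associated respectively to $(x|y)$ and $(y|z)$ are the two distinct endpoints of the edge $y$. A direct computation in the spirit of the proof of Lemma \ref{special-gBT} yields $\Hom_B(\Omega M, M) = 0$ (since $z$ is incident to $u$ but not $v$, so $M$ has no composition factor $S_z$), whereas $\Hom_B(M, \Omega M) \neq 0$ with dimension $m_u = [\Omega M : S_y]$ (using that $y$ is incident to $u$ and that assigning the top of $M$ to any $S_y$-slot of $\Omega M$ is relation-compatible thanks to the Brauer-tree equality $\rho_{y,v}^{m_v} = \rho_{y,u}^{m_u}$). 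Focus on the vertex $\Omega M$ in the quiver of $A = \End_B(B \oplus M \oplus \Omega M)$. The projective cover $P_z \twoheadrightarrow \Omega M$ and a canonical nonzero map $M \to \Omega M$ both contribute arrows into $\Omega M$, while the inclusion $\Omega M \hookrightarrow P_y$ provides an arrow out. The composition $(P_z \twoheadrightarrow \Omega M) \circ (\Omega M \hookrightarrow P_y)$ realises the Brauer arrow $(y \to z)\colon P_z \to P_y$, which is nonzero; and $(M \to \Omega M) \circ (\Omega M \hookrightarrow P_y)$ is nonzero because $\Omega M \hookrightarrow P_y$ is injective. Two nonzero compositions through a single outgoing arrow contradict the special biserial pairing, so $A$ cannot be biserial.

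For part (1), I would split by the nature of the non-hook indecomposable $M$. If $M$ is non-uniserial (a zigzag string module over the representation-finite biserial algebra $B$), then either its top or its socle splits as a direct sum of at least two distinct simples, so the irreducible maps into (or out of) $M$ in $\add(B\oplus M)$ multiply, yielding a vertex of the quiver of $A$ with more than two arrows of one orientation---violating the first clause of Definition \ref{def-biserial}. If $M$ is uniserial but not a hook, then $M$ is a proper subquotient of some hook $M(x,y)$, so including $M$ as a summand only ``partly intercepts'' a Brauer-quiver arrow of $B$; certain old Brauer arrows survive in the quiver of $A$ alongside new arrows passing through $M$, and I would exhibit a projective vertex $P_t$ (typically the projective cover of $M$) where two nonzero compositions pass through the same outgoing arrow, mimicking the violation found in part (2).

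The principal obstacle throughout is verifying that the asserted arrows of the quiver of $A$ are genuinely irreducible---that the generators of the relevant Hom spaces do not lie in $\rad^2$ of $\add(T)$ via factorisations through summands other than the ones being highlighted. This is checked by enumerating compositions through each indecomposable summand of $T$ and showing either that they vanish, or that their images are strictly smaller than the candidate generator. Degenerate situations---leaves of $\ul{G}$ where a hook simplifies to a simple module, low multiplicities where endomorphism rings become trivial, or small-valence coincidences where two hooks accidentally agree---require separate treatment and are where the proof becomes the most delicate.
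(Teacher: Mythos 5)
Your argument for part (2) is essentially the proof in the paper: the two nonzero composites $M\to\Omega M\hookrightarrow P_y$ and $P_z\to\Omega M\hookrightarrow P_y$ give two paths through the single arrow coming from the injective envelope of $\Omega M$, which is exactly the violation exhibited in the text (with labels shifted). Your side claim that $\dim\Hom_B(M,\Omega M)=m_u$ is not correct in general---for a Brauer star one has $\dim\Hom_B(S_i,\rad e_iB)=1$ whatever the multiplicity---but only the nonvanishing of this Hom space is used, and the factorisation checks you defer at the end are the same ones the paper carries out briefly.

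Part (1) is where the genuine gap lies, and your first mechanism does not work. A top (or socle) with two simple summands produces exactly two arrows of that orientation at the vertex of $M$, not ``more than two'', so the arrow-count condition need not fail anywhere. Concretely, let $B$ be the Brauer line with three edges $1-2-3$ and $\ul{m}\equiv 1$, and let $M=\rad(e_2B)$, which has top $S_1\oplus S_3$ and simple socle $S_2$ (it is not uniserial, hence not a hook). The quiver of $\End_B(B\oplus M)$ has exactly five arrows, given by the projective-cover components $P_1\to M$ and $P_3\to M$, the inclusion $M\hookrightarrow P_2$, and the left-multiplication maps $P_2\to P_1$ and $P_2\to P_3$; the remaining Brauer maps $P_1\to P_2$ and $P_3\to P_2$ are no longer arrows, since each one is the corresponding cover component followed by $M\hookrightarrow P_2$ and hence lies in the square of the radical. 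Thus every vertex meets at most two arrows in each direction, and the failure of special biseriality is visible only in the relations: the composites $P_1\to M\hookrightarrow P_2$ and $P_3\to M\hookrightarrow P_2$ are both nonzero, so two distinct arrows compose nontrivially with the single arrow given by the inclusion---an argument of the same shape as your part (2), not an arrow count.

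For uniserial non-hooks you only record a plan (``I would exhibit a projective vertex\ldots''), and the plan omits precisely the two points on which the proof turns. First, one must produce, at the chosen projective vertex, a second arrow that genuinely survives in the quiver of $\End_B(B\oplus M)$; this is exactly where the hypothesis that $M$ is not a hook has to be used (for a hook the competing map does factor through the new summand, and by Lemma \ref{special-gBT} the endomorphism ring is special biserial), and your proposal never identifies where non-hookness enters. Second, the simple non-hook case needs separate treatment: the composite $P_z\to P_y\to M$ you would imitate from part (2) vanishes when $M$ is simple, because the cover kills the radical; the paper instead disposes of this case by showing that the vertex of the (non-uniserial) projective cover acquires three incoming and three outgoing arrows. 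As written, part (1) therefore remains unproved.
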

\begin{proof}
Without loss of generality, assume $B$ is basic.

(1): Let $A:=\End_B(B\oplus M)$ be given as a bounded path algebra $KQ/I$ with admissible ideal $I$.
Let $x$ be the vertex in $Q$ corresponding to $M$.
Then the restriction of the projective cover (resp. injective envelope) map of $M$ to an indecomposable summand $P_y=e_yB$ induces an arrow $x\to y$ (resp. $y\to x$) in $Q$.
Fix now one such arrow $x\to y$ (corresponding to $\pi_y:P_y\to M$).

If $M$ is simple but $P_y$ is not uniserial (i.e. $M$ is a non-hook simple module), then $y$ has three outgoing and three incoming arrows in $Q$.
This clearly says that $\End_B(B\oplus M)$ not special biserial.

If $M$ has Loewy length greater than one, then there is a vertex $z$ so that $y\to z$ is an arrow in $Q_{\ul{G}}$ and post-composing $\pi_y$ with the left-multiplication map $(y\to z)\cdot -:P_z\to P_y$ gives a non-zero map $P_z\to M$.
Suppose $w$ is the predecessor of $y$ around the vertex associated to $(y|z)$.
If $M\ncong (w\to y)B$, then the left-multiplication map $(w\to y)\cdot -: P_y\to P_w$ does not factor through $\pi_y$.
This means that $w\to y$ remains as an arrow in $Q$, i.e. $Q$ has the following subquiver:
\[
\xymatrix@R=7pt{z& y\ar[l] & x\ar[l] \\ & & w\ar[lu] }
\]

The left-multiplication map $(w\to y\to z)\cdot -:P_z\to P_w$ and the composition $\pi_y\circ((y\to z)\cdot-):P_z\to M$ now corresponds to paths $(x\to y\to z)$ and $(w\to y\to z)$ in $Q$.
Since both of these maps are non-zero, the corresponding paths are not in the admissible ideal $I$.
Thus, $A$ is not special biserial.

(2):  If $M$ is a hook module isomorphic to $M(z,x)=(z\to x)B$, then there must be a non-zero map $\alpha:M\to \Omega(M)\cong M(x,y)$ given by mapping the simple top of $M$ to the simple socle of $\Omega(M)$.
Since none of the irreducible maps between projectives factor through $\alpha$, there is an arrow $(z|x)\to(x|y)$ in the quiver $Q$ of $A:=\End_B(B\oplus M\oplus \Omega(M))$, where $(z|x)$ and $(x|y)$ are the vertices in $Q$ corresponding to $M$ and $\Omega(M)$ respectively.

Similar to the proof of Lemma \ref{special-gBT}, the injective envelope $\iota_{(x|y)}:\Omega(M)\to P_x$ and the projective cover $\pi_{(x|y)}:P_y\to \Omega(M)$ induce arrows in $Q$ which ``split" the arrow $(x\to y)$ in the quiver of $Q$.

On the other hand, $\alpha$ does not factor through any of the irreducible maps between projectives, so the non-zero compositions $\iota_{(x|y)}\alpha:M\to P_x$ and $\iota_{(x|y)}\pi_{(x|y)}=(x\to y)\cdot-:P_y\to P_x$ induces non-zero paths $(x\to(x|y)\to (z|x))$ and $(x\to (x|y)\to y)$ respectively in $KQ/I\cong A$.
Thus, the endomorphism algebra $A$ is not special biserial.
\end{proof}

\begin{theorem}\label{thm-special-gBT}
Representation-finite gendo-symmetric biserial algebras are gendo-Brauer tree algebras.
Moreover, the following are equivalent for an algebra $A$.
\begin{enumerate}[label={\upshape(\arabic*)}]
\item $A$ is a special gendo-Brauer tree algebra.
\item $A$ is a representation-finite gendo-symmetric biserial QF-2 algebra.
\item $A$ is a representation-finite gendo-symmetric biserial algebra.
\item $A$ is Morita equivalent to $\End_B(B\oplus M)$ over a Brauer tree algebra $B$, where $M$ is a (possibly zero) direct sum of hook modules whose indecomposable summands are pairwise stably orthogonal.
\end{enumerate}

Additionally, the algebraic characterisation of Brauer tree algebras can be obtained as a special case by putting $M=0$ in {\rm (4)}, which is equivalent to taking away ``special gendo-" (resp. ``gendo-") in {\rm (1)} (resp. {\rm (2)} and {\rm (3)}).

\end{theorem}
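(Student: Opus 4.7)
My plan is to establish the cycle of implications $(1)\Rightarrow(2)\Rightarrow(3)\Rightarrow(4)\Rightarrow(1)$, and then deduce the ``Additionally'' claim by specialising to $M=0$.

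The implication $(1)\Rightarrow(2)$ is immediate from Lemmas \ref{RF-Bis} and \ref{special-gBT}: the former gives representation-finiteness, biseriality and the QF-2 property, while the latter realises $\Gamma_{\ul{G},\ul{m}}^W$ as $\End_B(B\oplus M)$ over the Brauer tree (hence symmetric) algebra $B$, so it is gendo-symmetric. The implication $(2)\Rightarrow(3)$ is just forgetting the QF-2 hypothesis.

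For $(4)\Rightarrow(1)$, I parameterise the hook summands of $M$ by a subset $W\subseteq H_{\ul{G}}$ via Proposition \ref{hookmod}, and then apply Lemma \ref{special-gBT} once $W$ is shown to be special. If $W$ were not special, there would exist $(x|y),(y|z)\in W$ whose associating vertices have different parity; by the description of $\Omega$-orbits of hook modules in Proposition \ref{hookmod}, this forces $M(y,z)\cong\Omega(M(x,y))$. To contradict stable orthogonality I exhibit a non-zero map $M(x,y)\to M(y,z)$ that does not factor through any projective: the composition of the surjection onto the top $S_y$ of $M(x,y)$ with the inclusion of the socle $S_y$ of $M(y,z)$ is non-zero, and any factorisation through a projective $P_a$ would force $a=y$ and then produce a splitting of the projective cover $P_y\twoheadrightarrow M(x,y)$ (using locality of $\End_B(P_y)$), which is impossible.

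The main obstacle is $(3)\Rightarrow(4)$. Given $A$ representation-finite gendo-symmetric biserial, I write $A\cong\End_B(B\oplus M)$ with $B$ symmetric and $M$ without projective summand, and argue in four steps. First, $B$ is representation-finite because the Schur functor $(-)e:\mathrm{Dom}_2(A)\to\mod B$ is an equivalence and $\mathrm{Dom}_2(A)$ is a full subcategory of the representation-finite $\mod A$. Second, $B$ is biserial, so that, being symmetric and representation-finite, it is a Brauer tree algebra; I prove this by combining the theorem of Skowronski--Waschbusch (representation-finite biserial equals special biserial) with an analysis of the quiver-with-relations of $A$ at the vertices corresponding to projective-injective summands --- these are exactly the vertices defining $B=eAe$, and the special biserial shape of $Q_A$ around each such vertex descends to the quiver of $B$. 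Third, every indecomposable summand of $M$ must be a hook module, since otherwise Lemma \ref{badgenerator}(1) would contradict biseriality of $A$. Fourth, any two distinct hook summands of $M$ must be stably orthogonal, since by the analysis in $(4)\Rightarrow(1)$ a failure of stable orthogonality between hooks essentially forces one to be $\Omega$ of the other, and Lemma \ref{badgenerator}(2) then contradicts biseriality of $A$. The hardest step is the second --- descending the special biserial structure to the corner $eAe$ --- where I expect to need a careful combinatorial analysis of the projective-injective neighbourhoods in $Q_A$. Finally, the ``Additionally'' clause follows by specialising to $M=0$ (equivalently $W=\emptyset$) in each condition: (1)--(4) then reduce to the classical algebraic characterisation of Brauer tree algebras as representation-finite symmetric biserial algebras.
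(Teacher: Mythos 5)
Your overall architecture matches the paper's (the implications $(1)\Rightarrow(2)\Rightarrow(3)$ and the use of Lemmas \ref{RF-Bis}, \ref{special-gBT} and \ref{badgenerator} are exactly the paper's ingredients), but there is a genuine gap at the step you yourself flag as hardest in $(3)\Rightarrow(4)$: you never actually prove that the symmetric corner algebra $B=eAe$ is (special) biserial, which is what makes $B$ a Brauer tree algebra and on which everything afterwards rests. Your sketched mechanism --- that ``the special biserial shape of $Q_A$ around each projective-injective vertex descends to the quiver of $B$'' --- is not correct as stated: the quiver of $eAe$ is \emph{not} the full subquiver of $Q_A$ on the $e$-vertices, since deleting vertices creates new arrows from composites through the removed vertices (in $\Gamma_{\ul{G},\ul{m}}^W$ itself, the Brauer-quiver arrow $x\to y$ of $eAe$ only appears as the composite $x\to(x|y)\to y$), and relations change accordingly. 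The paper closes this step by invoking that representation-finiteness and special biseriality are preserved under the idempotent truncation $A\mapsto eAe$; you either need to cite such a fact or actually carry out the combinatorial analysis you only promise. (Your alternative argument that $B$ is representation-finite via the Schur functor on $\mathrm{Dom}_2$ is fine.)

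Two smaller soft spots. First, in $(4)\Rightarrow(1)$ your non-factorisation argument is shaky: a factorisation through $P_a$ does force $a=y$ (since $S_a$ must be a composition factor of both $M(x,y)$ and $M(y,z)$, and $y$ is the only edge incident to both associating vertices), but it does not ``produce a splitting of the projective cover $P_y\twoheadrightarrow M(x,y)$''; the clean argument is that a map out of $M(x,y)$ factors through a projective iff it extends to the injective envelope $P_x$, and $\Hom_B(P_x,M(y,z))=0$ because $S_x$ is not a composition factor of $M(y,z)$ --- or simply use the paper's hammock description of stable Homs. Second, in $(3)\Rightarrow(4)$ you need the converse statement that \emph{failure} of stable orthogonality between two hook modules forces one to be $\Omega$ of the other; your $(4)\Rightarrow(1)$ analysis only gives the other direction, so ``essentially forces'' is an assertion, not a proof. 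The paper obtains it from the fact that $\ul{\Hom}_B(X,Y)\neq 0$ iff $Y$ lies in the forward hammock of $X$, which for hook modules means $Y\cong\Omega(X)$; you should include this (or an equivalent) argument explicitly.
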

\begin{proof}
(1)$\Rightarrow$(2): This is Lemma \ref{special-gBT} and Lemma \ref{RF-Bis} combined.

(2)$\Rightarrow$(3): trivial.

For the rest of the proof, we assume, without loss of generality, that all the algebras involved are basic.

(3)$\Rightarrow$(1):
\emph{We can assume $A$ is not symmetric, otherwise the implication is trivial.}
Choose $e$ such that $eA$ is a minimal faithful projective-injective $A$-module, then the idempotent $e$ has the property that $B:=eAe$ is symmetric and $A\cong \End_B(B\oplus M)$ for some (without loss of generality, basic) direct sum $M$ of non-projective indecomposable modules.

Since the properties of being special biserial and being representation-finite are preserved under taking idempotent truncation, the algebra $B$ is representation-finite special biserial.
This implies that $B$ is a Brauer tree algebra.

If there is a non-hook indecomposable direct summand $N$ of $M$, then by Lemma \ref{badgenerator} (1) an idempotent truncation of $A$, which is isomorphic to $\End_B(B\oplus N)$, becomes non-special biserial.  
This contradicts the fact that $A$ is special biserial, and so $M$ is a direct sum of hook modules (or zero).

Likewise, by Lemma \ref{badgenerator} (2), there cannot be an indecomposable (hook) summand $N$ of $M$ such that $\Omega(N)$ is also a direct summand of $M$.
In particular, we have $\add(M)\cap\add(\Omega M)=0$.
Note that if we write $M$ as $\bigoplus_{(x|y)\in W}M(x,y)$, then $\add(M)\cap \add(\Omega M)=0$ is equivalent to $W$ being a special subset of $H_{\ul{G}}$, where $\ul{G}$ is the underlying planar tree associated to $B$.

(1)$\Leftrightarrow$(4):  
By definition, $A$ is Morita equivalent to $\End_B(B\oplus M)$ for some Brauer tree algebra $B$ and some $B$-module $M$ given by a direct sum of hook modules.
Similar to the previous part of the proof, we only need to show the case when $M\neq 0$.
Without loss of generality, we assume that $M=\bigoplus_{i=1}^r M_i$ such that all the $M_i$'s are indecomposable and pairwise non-isomorphic.

Recall that the stable homomorphism space $\ul{\Hom}_B(X,Y)$ is non-zero if and only if $Y$ is in the forward hammock of $X$.
Therefore, under the additional assumption that $X,Y$ are hook modules, $\ul{\Hom}_B(X,Y)\neq 0$ is equivalent to $Y\cong \Omega(X)$.
In particular, $\ul{\Hom}(M_i,M_j)=0$ for all $i,j\in\{1,2,\ldots,r\}$ if, and only if, $\Omega(M_k)\notin \add(M)$ for all $k\in\{1,2,\ldots,r\}$.
This is equivalent to saying that $\add(M)\cap \add(\Omega M)=0$.
The remaining argument is then the same as the last paragraph in the previous part of the proof.
\end{proof}

If we restrict from biserial to uniserial, then we have the following explicit description.

\begin{corollary}
\label{nakchara}
Let $A$ be a gendo-symmetric Nakayama algebra with $n+r$ simple modules (up to isomorphism) and $n$ indecomposable projective-injective modules.
Then $A$ is isomorphic to 
\begin{align}\label{eq-gendoNaka}
\End_B\left(B\oplus \bigoplus_{i=1}^r \rad e_{x_i} B\right),
\end{align}
where $B$ is a symmetric Nakayama algebra with $n$ simple modules and Loewy length $w=nm+1$ for some $d \geq 1$, and $x_1,x_2,\ldots,x_r$ is a strictly increasing sequence of integers in $\{0,1,\ldots,n-1\}$.
In particular, the Kupisch series of $A$ can be obtained by inserting $(n+r)m$ between the $x_i$-th and the $(x_i+1)$-st entry in the $n$-term series $[(n+r)m+1,(n+r)m+1,\ldots,(n+r)m+1]$, for each $i\in\{1,\ldots,r\}$.
\end{corollary}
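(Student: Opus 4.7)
The plan is to combine Theorem~\ref{thm-special-gBT} with an analysis of the Gabriel quiver of $A$, and then to extract the Kupisch series via a direct dimension count.

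First, I would apply Theorem~\ref{thm-special-gBT} to write $A \cong \End_{B'}(B' \oplus M')$, where $B'$ is a basic Brauer tree algebra associated to some $(\ul{G}, \ul{m})$ and $M' = \bigoplus_{(x|y)\in W} M(x,y)$ is a special direct sum of hook modules. Using the presentation $A \cong KQ_{\ul{G}, W}/I$ given just before Lemma~\ref{RF-Bis}, the hypothesis that $A$ is Nakayama forces its Gabriel quiver to be a single cyclic quiver of length $n+r$. Since $Q_{\ul{G}, W}$ is obtained from the Brauer quiver $Q_{\ul{G}}$ by replacing, for each $(x|y)\in W$, the arrow $x\to y$ by a length-two path through a new vertex --- an operation which preserves the cyclic-quiver property --- the quiver $Q_{\ul{G}}$ itself (modulo the leaf-loops rendered redundant by the Brauer relation $\rho_{x,u}^{m_u} = \rho_{x,v}^{m_v}$) must already be a single cycle. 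This forces $\ul{G}$ to be a star, so $B'$ is a symmetric Nakayama algebra with $n$ simples and Loewy length $nm+1$ for some $m \geq 1$. Furthermore, any leaf-associated element $(y|y)\in W$ would introduce two irreducible maps $y\to (y|y)$ and $(y|y)\to y$ at vertex $y$ on top of the center-cycle arrows, producing a branching in the Gabriel quiver that contradicts the Nakayama hypothesis. Hence $W \subseteq \{(x|x+1) : x \in \mathbb{Z}/n\mathbb{Z}\}$; enumerating $W = \{(x_1|x_1+1), \ldots, (x_r|x_r+1)\}$ with $x_1 < \cdots < x_r$ and using $M(x_i, x_i+1) \cong \rad e_{x_i} B$ yields the expression~(\ref{eq-gendoNaka}).

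The Kupisch series then follows by a direct dimension count based on the $\Hom$-space formulas established in the proof of Lemma~\ref{special-gBT}. Since $[\rad e_y B : S_x] = m$ for every $x, y \in \mathbb{Z}/n\mathbb{Z}$, one obtains
\[
\dim e_x A = \dim \Hom_B(B \oplus M', e_x B) = (nm+1) + rm = (n+r)m + 1
\]
for every $x \in \{0, \ldots, n-1\}$; and since $\dim \Hom_B(\rad e_{x_j} B, \rad e_{x_i} B) = m$ for all $i, j$, one has
\[
\dim \Hom_B(B \oplus M', \rad e_{x_i} B) = nm + rm = (n+r)m
\]
for each $i \in \{1, \ldots, r\}$. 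Reading these dimensions along the cyclic order of the Gabriel quiver --- with the $(n+r)m$-dimensional projective at the inserted vertex $(x_i|x_i+1)$ between $x_i$ and $x_i+1$ --- produces the claimed insertion description of the Kupisch series.

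The main technical obstacle I anticipate is the leaf-element ruling-out step: one must verify that the length-two path introduced at a leaf $y$ remains as a genuine pair of irreducible maps in $A$, rather than collapsing into the central cycle via the socle relation of Definition~\ref{def-special-gBT}, so that the claimed branching at $y$ in the Gabriel quiver really occurs.
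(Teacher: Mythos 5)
Your proposal is correct and follows essentially the same route as the paper's proof: apply Theorem \ref{thm-special-gBT} to realise $A$ as a special gendo-Brauer tree algebra $\End_{B'}(B'\oplus M')$, then use cyclicity of the quiver of $A$ to force the underlying Brauer tree to be a star (so $B'$ is symmetric Nakayama) and every element of $W$ to be of the form $(x|x+1)$, giving $M(x_i,x_i+1)\cong \rad e_{x_i}B$. The paper states this more tersely (it neither spells out the exclusion of leaf elements $(y|y)$ nor the $\Hom$-space dimension count for the Kupisch series), but your added details, including the irreducibility check you flag, do go through, so this is the same argument carried out more explicitly.
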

\begin{proof}

It follows from the equivalence of (1) and (3) in Theorem \ref{thm-special-gBT} that $A$ is a special gendo-Brauer tree algebra.
In particular, if we write $A\cong \End_B(B\oplus M)$ with a symmetric algebra $B$, then $B$ must be a symmetric Nakayama algebra Morita equivalent to $\Lambda_{\ul{G},\ul{m}}$ for a Brauer star $(\ul{G},\ul{m})$.
Since the quiver of $A$ is cyclic, every element of the associating special subset of $H_{\ul{G}}$ must be of the form $(i|i+1)$ for some $i$ (c.f. the proof of Lemma \ref{special-gBT}), which corresponds to the hook module $M(i,i+1)\cong \rad(e_iB)$.
\end{proof}

We remark that part of this corollary is obtained as a special case of \cite{Yam2}, which says that the endomorphism ring of a generator over a self-injective Nakayama algebra $B$ is also a Nakayama algebra if and only if the indecomposable non-projective modules in the generator is a summand of the $\rad B$.
Our corollary gives an extra information about the Kupisch series of this endomorphism algebra in the case when $B$ is symmetric.

\section{Almost $\nu$-stable derived equivalences}\label{sec-nuder}

Almost $\nu$-stable derived equivalences were introduced in \cite{HuXi1} to narrow down the study of derived equivalences to those that preserve homological dimensions.
In subsection \ref{subsec-nuder-general}, we recall the definition and several related results.

We then use these results in subsection \ref{subsec-nuder-sgBT} to prove \hyperref[thmB]{Theorem B} (Theorem \ref{thm-nuder-rept}), which classifies special gendo-Brauer tree algebras up to almost $\nu$-stable derived equivalences.
With the help of a well-known theory in enumerative combinatorics, the classification theorem allows us to count the number of almost $\nu$-stable derived equivalence classes of algebras (Proposition \ref{prop-count}) with a given number of indecomposable projective-injectives and exceptional multiplicity.

\subsection{General theory}\label{subsec-nuder-general}
An $A$-module map $f:X \rightarrow Y$ is \emph{radical} if for any $A$-module $Z$ and maps $h:Z \rightarrow X$, $g:Y \rightarrow Z$, the composition $gfh$ is not an isomorphism. 
In other words, $f$ being radical is the same as saying $f\in\rad(\mod A)$.
A complex is \emph{radical} if all of its differential maps are radical.
We note that to every derived equivalence $F$ between two algebras $A$ and $C$ with inverse $G$, there are tilting complexes
\begin{align}
T^\bullet: &  \quad\cdots \rightarrow 0 \rightarrow T^{-n} \rightarrow \cdots \rightarrow T^{-1} \rightarrow T^0 \rightarrow 0 \rightarrow \cdots \notag \\
S^{\bullet}: &\quad\cdots \to 0 \rightarrow S^0 \rightarrow S^1 \rightarrow \cdots \rightarrow S^n \rightarrow 0 \rightarrow \cdots \notag 
\end{align}
associated to $F$ and $G$ respectively (in the sense that $F(T^\bullet)\cong C_C$ and $G(S^\bullet)\cong A_A$) which are radical (see \cite{HuXi1}).
\begin{definition}
The derived equivalence $F$ (or $G$) is \emph{almost $\nu$-stable} if
\begin{align}\label{eq-anustable-cpx}
\add\left(\bigoplus_{i=-1}^{-n}{T^i}\right)=\add\left(\bigoplus_{i=-1}^{-n}{\nu(T^i)}\right) \;\;\mbox{ and }\;\; \add\left(\bigoplus_{i=1}^{n}{S^i}\right)=\add\left(\bigoplus_{i=1}^{n}{\nu(S^i)}\right).
\end{align}
\end{definition}
Note that for a symmetric algebra, the Nakayama functor is isomorphic to the identity functor, so for symmetric algebras derived equivalences coincide with almost $\nu$-stable derived equivalences.

Recall the following results from \cite{HuXi1}.
\begin{theorem}{\rm (\cite{HuXi1})} \label{nuder}
Let $F$ be an almost $\nu$-stable derived equivalence between two algebras $A$ and $B$.
\begin{enumerate}[label={\upshape(\arabic*)}]
\item $F$ induces a stable equivalence of Morita type $\overline{F}:\ul{\mod} A\to \ul{\mod} B$.

\item $A$ and $B$ have the same global, dominant, finitistic dimensions, and the same number of indecomposable projective-injective modules.

\item $\End_A(A \oplus X)$ and $\End_B(B \oplus \overline{F}(X))$ are almost $\nu$-stable derived equivalent.
In particular, if $A$ is self-injective and $X$ has no projective summands, then $\End_A(A\oplus X)$ is almost $\nu$-stable derived equivalent to $\End_A(A\oplus \Omega^i(X))$ for all $i\in \mathbb{Z}$.
\end{enumerate}
\end{theorem}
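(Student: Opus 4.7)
The plan is to follow the original strategy of Hu and Xi, treating the three claims in turn. The common starting point is the existence of radical tilting complexes $T^\bullet$ (for $F$) and $S^\bullet$ (for a quasi-inverse $G$) satisfying \eqref{eq-anustable-cpx}. The key technical consequence of this condition is that the non-zero terms of $T^\bullet$ in strictly negative degrees (and those of $S^\bullet$ in strictly positive degrees) have additive closure invariant under the Nakayama functor $\nu$; consequently these terms are projective-injective modules, which is what makes all subsequent preservation arguments work.

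For (1), the strategy is to build a bimodule $M$ from $T^\bullet$ and a bimodule $N$ from $S^\bullet$ by taking iterated cones along right $\add(T^\bullet)$-approximations (respectively left approximations for $S^\bullet$), and then to show that $M\otimes_B N \cong A\oplus P$ and $N\otimes_A M \cong B\oplus Q$ with $P,Q$ projective bimodules. The condition \eqref{eq-anustable-cpx} is exactly what is needed to ensure that the ``error'' arising from this truncation is projective as a bimodule, which is the definition of a stable equivalence of Morita type.

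For (2), once (1) is in hand, the preservation of dominant dimension reduces to the fact that a stable equivalence of Morita type preserves the subcategory $\prinj$ and positive $\Ext$-groups; this simultaneously pins down the number of projective-injective summands. Preservation of global and finitistic dimension, on the other hand, uses the derived equivalence directly: since all negative-degree terms of $T^\bullet$ lie in $\add(A)$, the functor $F$ restricts to an equivalence between the full subcategories of modules of finite projective dimension, forcing the corresponding invariants on $A$ and $B$ to agree.

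For (3), the first assertion is obtained by enlarging the tilting complex $T^\bullet$ by placing $X$ in degree zero and verifying that the resulting complex remains radical and satisfies \eqref{eq-anustable-cpx}, now viewed as a tilting complex over the larger algebra $\End_A(A\oplus X)$; the induced derived equivalence lands in $\End_B(B\oplus \overline{F}(X))$. The self-injective specialisation exploits the observation that, for self-injective $A$, the shift $[1]$ on $D^b(A)$ is itself an almost $\nu$-stable self-equivalence: its associated tilting complex is $A[1]$, for which $\add(A)=\add(\nu A)$ by self-injectivity, and the induced stable equivalence is $\Omega^{-1}$. Iterating this and applying the first part of (3) then yields the statement about $\End_A(A\oplus \Omega^i(X))$. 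The main obstacle is the bimodule construction in (1) and the verification that the defect in $M\otimes_B N$ against $A$ is projective as a bimodule; once that step is in place, the remaining parts follow, with some care, from general properties of stable equivalences of Morita type combined with standard facts about tilting complexes.
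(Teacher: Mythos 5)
First, a point of comparison: the paper does not prove Theorem \ref{nuder} at all --- it is quoted from Hu and Xi (\cite{HuXi1}, with the endomorphism-ring statement in (3) really resting on \cite{HuXi2}) --- so what you are reconstructing is Hu--Xi's own argument. Your outline of (1) (iterated-cone/truncation construction of bimodules from $T^\bullet$ and $S^\bullet$, with \eqref{eq-anustable-cpx} forcing the defect of $M\otimes_B N$ against $A$ to be a projective bimodule) is indeed their strategy. Two of your steps, however, have genuine gaps. In (2), the claim that $F$ ``restricts to an equivalence between the full subcategories of modules of finite projective dimension, forcing the corresponding invariants on $A$ and $B$ to agree'' does not work: a derived equivalence does not restrict to module categories (modules are sent to complexes), and even for ordinary derived equivalences only the \emph{finiteness} of global or finitistic dimension is preserved --- the values can shift by up to the width of the tilting complex. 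The equality of global, finitistic and dominant dimensions, and of the number of projective-injective indecomposables, is deduced in \cite{HuXi1} from the stable equivalence of Morita type obtained in (1), using that such equivalences commute with syzygies and preserve these invariants; it is not a direct consequence of the derived equivalence.

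In (3), ``enlarging $T^\bullet$ by placing $X$ in degree zero and viewing it as a tilting complex over $\End_A(A\oplus X)$'' is not meaningful as stated: a tilting complex over $\Lambda:=\End_A(A\oplus X)$ must consist of projective $\Lambda$-modules, i.e.\ be of the form $\Hom_A(A\oplus X,Y^\bullet)$ for a complex $Y^\bullet$ with terms in $\add(A\oplus X)$, and moreover $T^\bullet\oplus X$ is in general not self-orthogonal in $D^b(A)$ (the groups $\Hom_{D^b(A)}(X,T^\bullet[i])$ need not vanish), so it cannot simply be declared tilting. The actual construction in \cite{HuXi2} replaces $X$ by a carefully chosen complex built from $F$ and $\add(A\oplus X)$-approximations, verifies the tilting conditions, identifies the endomorphism ring with $\End_B(B\oplus\overline{F}(X))$, and then checks that the resulting tilting complex again satisfies \eqref{eq-anustable-cpx}; this is the bulk of the work and is missing from your sketch. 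Your final observation --- that for self-injective $A$ the shift $[1]$ is an almost $\nu$-stable self-equivalence of $D^b(A)$ whose induced stable equivalence is $\Omega^{-1}$, so the ``in particular'' clause follows by iterating the first part of (3) --- is correct.
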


For completeness, we give a proof that almost $\nu$-stable derived equivalences also preserve the Gorenstein dimension.
\begin{corollary}
If $A$ is $d$-Iwanaga-Gorenstein and $B$ is almost $\nu$-stable derived equivalent to $A$, then $B$ is also $d$-Iwanaga-Gorenstein.
\end{corollary}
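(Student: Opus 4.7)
The plan is to combine Theorem \ref{nuder}(2) with the preservation of the Iwanaga-Gorenstein property under derived equivalences.

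Since $A$ is $d$-Iwanaga-Gorenstein, the relation between finitistic dimension and Gorenstein dimension recalled in Subsection \ref{subsec-homdim} gives $\mathrm{findim}(A)=d$. By Theorem \ref{nuder}(2) the finitistic dimension is preserved under almost $\nu$-stable derived equivalences, so $\mathrm{findim}(B)=d$ as well. Consequently, once we show $B$ is also Iwanaga-Gorenstein, its Gorenstein dimension will automatically be finite and hence coincide with its finitistic dimension, namely $d$, yielding the desired conclusion.

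The remaining task is therefore to verify that $B$ is Iwanaga-Gorenstein. For this, I would use the characterisation that an algebra $\Lambda$ is Iwanaga-Gorenstein if and only if the two subcategories $K^b(\proj \Lambda)$ and $K^b(\inj \Lambda)$ of $D^b(\mod \Lambda)$ coincide; indeed, the equality forces $\Lambda \in K^b(\inj \Lambda)$ and $D(\Lambda) \in K^b(\proj \Lambda)$, and conversely. A derived equivalence identifies $K^b(\proj)$ on the two sides, as these are precisely the compact objects of the unbounded derived category; applying the analogous reasoning to the induced derived equivalence between opposite algebras via the $K$-linear duality $D$ identifies $K^b(\inj)$ on the two sides as well. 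Since the equality $K^b(\proj) = K^b(\inj)$ holds for $A$, it then transfers to $B$, which is therefore Iwanaga-Gorenstein.

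The main technical obstacle is the preservation of $K^b(\inj)$ under a derived equivalence; while this is a well-known consequence of Rickard's theorem, the duality requires some care. An alternative route, perhaps more in the spirit of the present article, is to use the stable equivalence of Morita type $\overline{F}$ furnished by Theorem \ref{nuder}(1): such equivalences respect the subcategory of Gorenstein-projective modules, and one can characterise the Iwanaga-Gorenstein property via the finiteness of Gorenstein projective dimensions, so that the property transfers directly through $\overline{F}$. Either path leads to $B$ being Iwanaga-Gorenstein, after which the dimension equality follows from the finitistic dimension argument above.
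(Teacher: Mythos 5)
Your proposal is correct and follows essentially the same route as the paper: Happel's characterisation of Iwanaga--Gorenstein algebras via $K^b(\proj)\simeq K^b(\inj)$ gives derived invariance of the Iwanaga--Gorenstein property, and the equality of finitistic and Gorenstein dimensions combined with Theorem \ref{nuder}(2) forces the Gorenstein dimension of $B$ to equal $d$. The extra detail you give on transferring $K^b(\inj)$ (and the alternative via stable equivalences of Morita type) is supplementary to what the paper leaves implicit, but the core argument coincides.
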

\begin{proof}
By a result of Happel \cite{Hap}, an algebra $A$ is Iwanaga-Gorenstein if and only if there is an equivalence of bounded homotopy categories $K^{b}(\proj A)\simeq K^{b}(\inj A)$.
Thus, being Iwanaga-Gorenstein is invariant under derived equivalence. Assume $B$ has finite Gorenstein dimension, say $d'$.
In particular, this means that the finitistic dimension of $B$ is also $d'$.
On the other hand, the finitistic dimension of $A$ is the same as its Gorenstein dimension $d$, so Theorem \ref{nuder} (2) implies that $d'=d$.
\end{proof}

The following is a ``converse result" of Theorem \ref{nuder} (3).
\begin{theorem}
\label{nustable}
Let $\hat{F}$ be an almost $\nu$-stable derived equivalence between two gendo-symmetric algebras $\End_A(A \oplus M_1)$ and $\End_B(B \oplus M_2)$, where $M_i$ are basic non-zero modules without projective summands.
Then the symmetric algebras $A$ and $B$ are (almost $\nu$-stable) derived equivalent.
Furthermore, $\hat{F}$ induces a stable equivalence $\overline{F}:\ul{\mod}A\to\ul{\mod}B$ with $\overline{F}(M_1)=M_2$.
\end{theorem}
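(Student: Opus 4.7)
The plan is to pass from a tilting complex for $\hat{F}$ to one for the symmetric algebra $A$ via the Schur functor. Let $e_A \in \hat{A} := \End_A(A \oplus M_1)$ and $e_B \in \hat{B} := \End_B(B \oplus M_2)$ be the idempotents with $\add(e_A \hat{A}) = \prinj \hat{A}$ and $\add(e_B \hat{B}) = \prinj \hat{B}$, so that $A = e_A \hat{A} e_A$ and $B = e_B \hat{B} e_B$. Fix a radical tilting complex $T^\bullet = (T^{-n} \to \cdots \to T^0) \in K^b(\proj \hat{A})$ realising $\hat{F}$, so $\End_{D(\hat{A})}(T^\bullet) \cong \hat{B}$. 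Combining the almost $\nu$-stable hypothesis with the characterisation of $\prinj \hat{A}$ as the subcategory of projectives fixed by the Nakayama functor, we have $T^i \in \prinj \hat{A}$ for every $-n \leq i \leq -1$.

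The central technical step is to decompose $T^\bullet \cong T^\bullet_1 \oplus T^\bullet_2$ in $K^b(\proj \hat{A})$ along the idempotent decomposition $1_{\hat{B}} = e_B + (1-e_B)$: the summand $T^\bullet_1$ should correspond to $e_B \hat{B}$ and lie entirely in $K^b(\prinj \hat{A})$, while $T^\bullet_2$ corresponds to $(1-e_B)\hat{B}$. The justification would rely on the fact that a derived equivalence commutes, up to natural isomorphism, with the Nakayama functor, and that both the summand $e_B \hat{B}$ of $\hat{B}$ and the subcategory $\prinj \hat{A}$ are characterised as the loci fixed by $\nu$; cross-checking with Theorem~\ref{nuder}~(2) ensures the number of prinj summands matches. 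Once the decomposition is in hand, applying the Schur functor $(-)e_A$ (which restricts to an equivalence $\prinj \hat{A} \xrightarrow{\sim} \proj A$) to $T^\bullet_1$ produces a complex $\overline{T}^\bullet \in K^b(\proj A)$. The Hom-vanishing in $D(A)$ is inherited from $D(\hat{A})$ via the full-faithfulness of the Schur functor on $\prinj \hat{A}$; the endomorphism algebra computes as $\End_{D(A)}(\overline{T}^\bullet) \cong \End_{D(\hat{A})}(T^\bullet_1) \cong e_B \hat{B} e_B = B$; and the generation condition passes through the equivalence, since $T^\bullet_1$ generates $K^b(\prinj \hat{A})$ inside $K^b(\proj \hat{A})$. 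Hence $\overline{T}^\bullet$ is a tilting complex for $A$ with endomorphism ring $B$, yielding a derived equivalence $F : D^b(A) \to D^b(B)$; it is automatically almost $\nu$-stable since $A, B$ are symmetric.

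By Theorem~\ref{nuder}~(1), $F$ induces a stable equivalence of Morita type $\overline{F} : \ul{\mod} A \to \ul{\mod} B$. To verify $\overline{F}(M_1) = M_2$, I would use that the Schur functor identifies $\add((1-e_A)\hat{A}) \subset \mod \hat{A}$ with $\add(M_1) \subset \mod A$, and likewise on the $\hat{B}$-side with $\add(M_2)$. The summand $T^\bullet_2$ matches the non-projective-injective parts of $\hat{A}$ and $\hat{B}$ by construction, and this pairing descends through the Schur functors to identify $M_1$ with $M_2$ under $\overline{F}$. The main obstacle throughout will be rigorously establishing the clean decomposition $T^\bullet = T^\bullet_1 \oplus T^\bullet_2$ with $T^\bullet_1$ entirely supported in $K^b(\prinj \hat{A})$; this is precisely where the subtle interplay between the almost $\nu$-stable condition, the Nakayama functor, and the Schur functor becomes indispensable, and any complete proof will have to make this step rigorous, likely by exploiting the bimodule description of derived equivalences together with the fact that $\hat{A} e_A$ is a minimal faithful summand characterised as $\nu$-invariant.
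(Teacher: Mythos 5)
Your overall strategy (cut the tilting complex of $\hat{F}$ by the idempotent $e_B$, land in $K^b(\prinj \hat{A})$, and transport through the Schur functor equivalence $\prinj\hat{A}\simeq\proj A$) is different from the paper, which simply quotes \cite[Cor 3.10]{HuXi1} for the derived equivalence of $A$ and $B$ and then builds $\overline{F}$ on module categories, via the degree-zero term of $\hat{F}(X)$ from \cite[Lem 3.1]{HuXi1} and the equivalences $\mathrm{Dom}_2/[\prinj]\simeq\ul{\mod}$. However, as written your argument has a genuine gap exactly at the step you flag. The justification offered for the decomposition --- that $\nu$ commutes with derived equivalences and that $e_B\hat{B}$ and $\prinj\hat{A}$ are the ``$\nu$-fixed loci'' --- only yields $\nu(T_1^\bullet)\cong T_1^\bullet$ in $D^b(\hat{A})$, i.e. that $T_1^\bullet$ lies (up to isomorphism) in $K^b(\proj\hat{A})\cap K^b(\inj\hat{A})$. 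That intersection is in general strictly larger than $K^b(\prinj\hat{A})$: for any algebra of finite global dimension every object of $D^b$ lies in it, while $K^b(\prinj)$ can be tiny. So ``$\nu$-fixed'' alone does not place $T_1^\bullet$, or its degree-zero term, in $\prinj\hat{A}$, nor does it give that $T_1^\bullet$ generates $K^b(\prinj\hat{A})$ (needed for your tilting/generation claim over $A$). What is actually required is Hu--Xi's theorem that an almost $\nu$-stable derived equivalence restricts to an equivalence between the homotopy categories of $\nu$-stable projectives; that is precisely the content of \cite[Cor 3.10]{HuXi1}, which the paper invokes, and your proposal would essentially have to reprove it rather than deduce it from formal $\nu$-compatibility.

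The second gap is the final assertion $\overline{F}(M_1)=M_2$. You need both to say which stable equivalence $\overline{F}$ is induced by $\hat{F}$ (the paper defines it via $X\mapsto \overline{Q}_X^0$ on $\mathrm{Dom}_2$ composed with the Schur functors) and to prove it sends $M_1$ to $M_2$; the sentence ``this pairing descends through the Schur functors'' is not an argument, and compatibility between your constructed $F$ on $D^b(A)$ and the action of $\hat{F}$ on modules is exactly what must be checked. The paper does this concretely: apply $\hat{F}$ to the regular module $C$, use that the positive-degree components of $\hat{F}(C)$ are projective-injective to conclude every projective non-injective $\hat{B}$-module occurs in degree zero, hence $M_2$ is a direct summand of $\overline{F}(M_1)$, and then upgrade to an isomorphism using $|M_1|=|M_2|$ (from Theorem \ref{nuder}(2)), preservation of indecomposability under stable equivalence, and basicness. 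Some argument of this kind must be supplied before your proof is complete.
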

\begin{proof}
The fact that $A$ and $B$ are also almost $\nu$-stable derived equivalent follows from \cite[Corollary 3.10]{HuXi1}.
Let $C$ and $E$ be the gendo-symmetric algebra $\End_A(A\oplus M_1)$ and $\End_B(B\oplus M_2)$ respectively.

In \cite[Lemma 3.1]{HuXi1}, it was shown that for any $C$-module $X$, $\hat{F}(X)$ is isomorphic to a complex of the form 
\[
0 \rightarrow \overline{Q}_X^{0} \rightarrow \overline{Q}_X^{1} \rightarrow ... \rightarrow \overline{Q}_X^{n} \rightarrow 0,
\]
with $\overline{Q}_X^{i}\in\prinj E$ for $i \neq 0$ (see also \cite[Prop 3.8]{HuXi1}).
Now we define $\tilde{F}(X)=\overline{Q}_X^{0}$ regarded as an object in $\mod E/[\prinj E]$.
By the proof of \cite[Lem 3.1]{HuXi1}, if $X$ is a projective-injective $C$-module, then $\tilde{F}(X)$ is also projective-injective.
Moreover, using \cite[Prop 3.4]{HuXi1}, one can show that $\tilde{F}$ gives a well-defined equivalence between the categories $\mod C/[\prinj C] $ and $\mod E/[\prinj E]$.

Let $e$ (resp. $f$) be an idempotent such that $eC$ (resp. $fE$) is the unique minimal faithful projective-injective $C$-module (resp. $E$-module).
Recall that the Schur functor $(-)e$ is an equivalence between $\mathrm{Dom}_2(C)$ and $\mod A$ (as $A\cong eCe$) which sends projective-injective $C$-modules to projective(-injective) $A$-modules.
Hence, there is an equivalences $\mathrm{Dom}_2(C)/[\prinj C] \simeq \ul{\mod}A$.
Similarly, we have $\mathrm{Dom}_2(E)/[\prinj E]\simeq \ul{\mod}B$

Using these equivalences, we can define an equivalence $\overline{F}: \ul{\mod}A \rightarrow \ul{\mod}B$ which fits in the following commutative diagram:
\begin{align}
\xymatrix{
\mathrm{Dom}_2(C)/[\prinj C] \ar[r]^{\tilde{F}} \ar[d]_{(-)e}^{\wr} & \mathrm{Dom}_2(E)/[\prinj E] \ar[d]_{(-)f}^{\wr} \\
\underline{\mod}A \ar[r]_{\overline{F}} & \underline{\mod} B }\notag 
\end{align}
In other words, $\overline{F}(Xe)=\overline{Q}_X^0 f$ for any $X\in \mathrm{Dom}_2(C)$.
Note that $Ce\cong (1-e)Ce=M_1$ and $Ef\cong (1-f)Ef=M_2$ in $\ul{\mod}A$ and $\ul{\mod}B$ respectively, as $C$ and $E$ are gendo-symmetric with $eCe\cong A$ and $fEf\cong B$.

Since $C\in K^b(\proj C)\simeq K^b(\proj E)$ (via $\hat{F}$), we can assume that every component of the complex $\overline{Q}_C^\bullet=\hat{F}(C)$ is a projective $E$-module.
Since $C$ is tilting and positive degree components of $\overline{Q}_C^\bullet$ consist only of projective-injective $E$-modules, every projective non-injective $E$-module must appear as a direct summand of $\overline{Q}_C^0$.
This means that $\overline{Q}_C^0\cong (1-f)E \oplus f'E$ for some idempotent $f'$.
Therefore, we have
$\overline{F}(M_1)\cong \overline{F}((1-e)Ce) \cong \overline{F}(Ce) \cong \overline{Q}_C^0f \cong (1-f)Ef\oplus f'Ef$, which means that $M_2$ is a direct summand of $\overline{F}(M_1)$.
Derived equivalence between $C$ and $E$ implies that $|M_1|=|M_2|$.
Since stable equivalence preserves indecomposability, and both $M_1$ and $M_2$ are basic without projective summand, we have $M_2\cong \hat{F}(M_1)$.
\end{proof}
If we weaken the condition to arbitrary derived equivalent between gendo-symmetric algebras, then there is still a derived equivalence between the corresponding symmetric algebras, by combining the derived restriction theorem in \cite{FHK} and Rickard's theorem \cite{Ric}.

\begin{corollary}\label{nudercrit}
Suppose $A,B$ are representation-finite symmetric algebras, and $M,N$ are basic modules over the respective algebras without projective direct summands.
The gendo-symmetric algebras $\End_A(A \oplus M_1)$ and $\End_B(B \oplus M_2)$ are almost $\nu$-stable derived equivalent if, and only if, there exists a stable equivalence $\overline{F}: \ul{\mod}A \to \ul{\mod}B$ with $\overline{F}(M_1)=M_2$ which is induced by a derived equivalence $F:D^b(\mod A)\to D^b(\mod B)$.
\end{corollary}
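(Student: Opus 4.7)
The proof is essentially a bookkeeping exercise combining the two main results of this subsection: Theorem \ref{nustable} handles one direction and Theorem \ref{nuder}(3) handles the other. The main simplifying observation is that since $A$ and $B$ are symmetric, the Nakayama functor on each is (isomorphic to) the identity, so every derived equivalence between $A$ and $B$ is automatically almost $\nu$-stable. Hence the ``almost $\nu$-stable'' qualifier is automatic on the small side and becomes a genuine condition only on the gendo-symmetric side.

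For the forward implication, suppose $\hat F$ is an almost $\nu$-stable derived equivalence between $C:=\End_A(A\oplus M_1)$ and $E:=\End_B(B\oplus M_2)$. Theorem \ref{nustable} applies verbatim and produces two things at once: an almost $\nu$-stable (in particular, ordinary) derived equivalence $F: D^b(\mod A)\to D^b(\mod B)$, together with an induced stable equivalence $\overline F:\ul\mod A\to\ul\mod B$ satisfying $\overline F(M_1)=M_2$. This is exactly the data demanded by the corollary, so the forward direction is immediate.

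For the backward implication, suppose we are handed a derived equivalence $F:D^b(\mod A)\to D^b(\mod B)$ inducing a stable equivalence $\overline F$ with $\overline F(M_1)=M_2$. Because $A$ and $B$ are symmetric, the condition (\ref{eq-anustable-cpx}) on the tilting complexes associated with $F$ and its inverse holds trivially, so $F$ is almost $\nu$-stable. Apply Theorem \ref{nuder}(3) with the module $X=M_1$: one obtains an almost $\nu$-stable derived equivalence between $\End_A(A\oplus M_1)$ and $\End_B(B\oplus \overline F(M_1))=\End_B(B\oplus M_2)$, which is exactly $C$ and $E$.

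There is essentially no obstacle beyond checking that the notion of ``induced stable equivalence'' used in the hypothesis of the backward direction is the same one produced by Theorem \ref{nuder}(1) and referred to in the statement of Theorem \ref{nuder}(3); this is built into the construction of $\overline F$ from a derived equivalence of self-injective (in particular, symmetric) algebras via the functor $\hat F$ on homotopy categories as described in \cite{HuXi1}. Everything else is a direct invocation of the two previously established theorems.
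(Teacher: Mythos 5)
Your backward direction is correct and is exactly the paper's argument: symmetry of $A$ and $B$ makes every derived equivalence between them almost $\nu$-stable, and Theorem \ref{nuder}(3) applied to $X=M_1$ finishes it. The problem is in your forward direction. You assert that Theorem \ref{nustable} ``produces two things at once: an almost $\nu$-stable derived equivalence $F:D^b(\mod A)\to D^b(\mod B)$, together with an induced stable equivalence $\overline F$ satisfying $\overline F(M_1)=M_2$.'' That is not what the theorem gives. Theorem \ref{nustable} gives (a) that $A$ and $B$ are derived equivalent (via \cite[Cor 3.10]{HuXi1}), and (b) a stable equivalence $\overline F:\ul{\mod}A\to\ul{\mod}B$ with $\overline F(M_1)=M_2$ — but this $\overline F$ is constructed from $\hat F$ (the derived equivalence between the two \emph{endomorphism} algebras) through the Schur functor equivalences $\mathrm{Dom}_2(C)/[\prinj C]\simeq \ul{\mod}A$, not from any derived equivalence between $A$ and $B$. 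There is no claim, and no proof in Theorem \ref{nustable}, that this particular $\overline F$ is induced by a derived equivalence $F:D^b(\mod A)\to D^b(\mod B)$, which is precisely what the corollary demands. So as written your forward implication does not close.

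The missing ingredient — and the reason the hypothesis ``representation-finite'' appears at all — is the theorem that every stable equivalence between representation-finite self-injective (in particular symmetric) algebras is induced by a derived equivalence: this is due to Asashiba \cite{Asa} in the standard case and Dugas \cite[Sec 5]{Dug} in the non-standard case. The paper's proof first obtains $\overline F$ with $\overline F(M_1)=M_2$ from Theorem \ref{nustable}, and then invokes this Asashiba--Dugas result to conclude that $\overline F$ is induced by some derived equivalence between $A$ and $B$. Your proposal never uses representation-finiteness, which is a sign the argument cannot be complete: for general symmetric algebras there is no reason for the stable equivalence coming out of Theorem \ref{nustable} to lift to (or even agree with one coming from) a derived equivalence of $A$ and $B$. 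Inserting the citation to \cite{Asa} and \cite{Dug} at the end of your forward direction repairs the argument and makes it coincide with the paper's proof.
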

\begin{proof}
If there exists a stable equivalence $\overline{F}: \ul{\mod}A \to \ul{\mod}B$ with $\overline{F}(M_1)=M_2$ which is induced by a derived equivalence $F:D^b(\mod A)\to D^b(\mod B)$, then $\End_A(A \oplus M_1)$ and $\End_B(B \oplus M_2)$ are almost $\nu$-stable derived equivalent by Theorem \ref{nuder} (3).

Conversely, if $\End_A(A \oplus M_1)$ and $\End_B(B \oplus M_2)$ are almost $\nu$-stable derived equivalent, then by Theorem \ref{nustable} there is a stable equivalence $\overline{F}: \ul{\mod}A \to \ul{\mod}B$ with $\overline{F}(M_1)=M_2$.
Now the claim follows the fact that stable equivalences between representation-finite symmetric algebras are induced by a derived equivalence.
This fact is shown in \cite{Asa} for the so-called standard case, and in \cite[Sec 5]{Dug} for the non-standard case.
\end{proof}

The following result somewhat generalises a result of Rickard \cite{Ric2} where he proves that the property of being a symmetric algebra is derived invariant.

\begin{theorem}\label{nuder-gendosymm}
Let $A$ be a gendo-symmetric algebra.
If $B$ is almost $\nu$-stable derived equivalent to $A$, then $B$ is also gendo-symmetric.
\end{theorem}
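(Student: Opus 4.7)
The plan is to verify the characterisation of gendo-symmetric algebras recalled in Subsection \ref{subsec-gendosymm}: I must establish that $\mathrm{domdim}(B) \geq 2$ and that $B$ admits an idempotent $f$ such that $fB$ is the minimal faithful projective-injective right $B$-module with $D(Bf) \cong fB$ as $(fBf,B)$-bimodules. The condition on dominant dimension is immediate from Theorem \ref{nuder}(2); this also guarantees that $B$ is QF-3, so it has a basic minimal faithful projective-injective module, which I call $fB$, and it has the same number of indecomposable summands as the basic minimal faithful projective-injective $A$-module $eA$ (picked via gendo-symmetry of $A$).

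Next, I would show $F(eA) \cong fB$, where $F:D^b(\mod A)\to D^b(\mod B)$ is the given almost $\nu$-stable derived equivalence. Following the analysis in \cite[Lem 3.1]{HuXi1} (also used in the proof of Theorem \ref{nustable}), $F$ sends projective-injective $A$-modules to projective-injective $B$-modules concentrated in degree zero, and induces a bijection on isomorphism classes of indecomposable projective-injectives. Since $F$ is an equivalence, it preserves basicness and the number of indecomposable summands, so $F(eA)$ must be basic projective-injective of the same cardinality as $fB$, forcing $F(eA)\cong fB$ as $B$-modules. Functoriality of $F$ also yields a $K$-algebra isomorphism $eAe\cong \End_A(eA)\cong \End_B(F(eA))\cong fBf$.

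The key remaining step is to obtain $\nu(fB)\cong fB$ as $(fBf,B)$-bimodules, which is equivalent to the required $D(Bf)\cong fB$. I would invoke the standard fact that any derived equivalence between finite dimensional algebras commutes with the Nakayama functor, i.e.\ there is a natural isomorphism $\nu\circ F\cong F\circ\nu$ between functors $D^b(\mod A)\to D^b(\mod B)$. Using the gendo-symmetry of $A$ to rewrite $\nu(eA)\cong eA$ as $(eAe,A)$-bimodules, together with $F(eA)\cong fB$, one computes
\[
\nu(fB)\cong \nu(F(eA))\cong F(\nu(eA))\cong F(eA)\cong fB
\]
in $D^b(\mod B)$. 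Naturality of the intermediate isomorphisms ensures the composite respects the left $eAe$-action and thus, via the algebra isomorphism $eAe\cong fBf$, gives a $(fBf,B)$-bilinear isomorphism, which is what is needed.

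The principal technical hurdle is the careful tracking of bimodule structures. In particular, one must confirm that the isomorphism $F(eA)\cong fB$ is $(eAe,B)$-bilinear (with $eAe$ acting through the endomorphism-ring identification), and that the commutation of $\nu$ with $F$ is bimodule-linear. These points are standard once one realises that $F$ arises from a two-sided tilting complex, so both $F$ and $\nu$ are given by derived tensor products with bimodule complexes; however, they require some bookkeeping to verify rigorously.
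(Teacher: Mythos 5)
The decisive step in your argument fails. You claim that an almost $\nu$-stable derived equivalence $F$ ``sends projective-injective $A$-modules to projective-injective $B$-modules concentrated in degree zero'', and hence that $F(eA)\cong fB$. This is not what \cite[Lem 3.1]{HuXi1} gives: that lemma only says $F(X)$ is isomorphic to a complex whose terms in positive degrees are projective-injective and whose degree-zero term is projective-injective when $X$ is; it does not say the image is a stalk complex. In fact the claim is false in general. Already for $A=B$ symmetric (where every derived equivalence is almost $\nu$-stable and $e=f=1$), $F(A)\cong B$ would force $A\cong G(B)\cong T^\bullet$, i.e.\ $F$ would have to be a Morita equivalence. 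Worse for your route: since $F$ is an equivalence, $F(eA)\cong fB$ would yield a $K$-algebra isomorphism $eAe\cong\End_{D^b(B)}(F(eA))\cong fBf$, and this is false in general --- within this very paper, Theorem \ref{thm-nuder-rept} produces almost $\nu$-stable derived equivalences between special gendo-Brauer tree algebras whose corner algebras are a non-star Brauer tree algebra on one side and a symmetric Nakayama (Brauer star) algebra on the other; these corners are only \emph{derived equivalent}, not isomorphic. Consequently the whole second half of your argument (transporting $\nu(eA)\cong eA$ through $\nu F\cong F\nu$ to get $D(Bf)\cong fB$ as $(fBf,B)$-bimodules) has nothing to stand on: the bimodule condition you want lives over $fBf$, which cannot be identified with $eAe$.

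What does survive is your first paragraph: $\mathrm{domdim}(B)\geq 2$ and the matching count of indecomposable projective-injectives follow from Theorem \ref{nuder} (2), and then $\add(fB)=\prinj B$ for the minimal faithful projective-injective $fB$. The correct continuation, which is the paper's, avoids any identification of $eA$ with an image under $F$: one checks that $eA$ equals the basic additive generator $E$ of the $\nu$-stably projective-injective $A$-modules (using $\nu(eA)\cong eA$ and $\add(eA)=\prinj A$), and that the analogous module $\hat{E}$ on the $B$-side equals $fB$ (by the count of projective-injectives). Then \cite[Cor 3.10]{HuXi1} gives a derived equivalence between $\End_A(E)\cong eAe$ and $\End_B(\hat{E})\cong fBf$; since $eAe$ is symmetric and symmetricity is a derived invariant \cite{Ric2}, $fBf$ is symmetric. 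Finally, $\mathrm{domdim}(B)\geq 2$ gives the double centraliser property $B\cong\End_{fBf}(Bf)$ with $Bf$ a generator over the symmetric algebra $fBf$, so $B$ is gendo-symmetric by definition; no bimodule isomorphism $D(Bf)\cong fB$ needs to be (and cannot be) transported through $F$ itself.
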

\begin{proof}
Let $E$ (resp. $\hat{E}$) be the basic direct sum of projective-injective $A$-modules (resp. $B$-modules) $X$ which satisfie the property that $\nu^{i}(X)$ is again projective-injective for all $i \geq 1$.
It follows from \cite[Cor 3.10]{HuXi1} that $\End_A(E)$ and $\End_B(\hat{E})$ are derived equivalent and self-injective.
In particular, we have $|E|=|\hat{E}|$.

$A$ being gendo-symmetric implies that there is an idempotent $e$ of $A$ so that $eA$ is minimal faithful projective-injective with $eA\cong D(Ae)$ as $(eAe,A)$-bimodules.
We first claim that $eA= E$.
Indeed, $eA\cong \nu_A(eA)$ as (right) $A$-modules implies that $eA$ is a direct summand of $E$, whereas $\add(eA)=\prinj A\supset \add(E)$ says that $E$ is a direct summand of $eA$.

Theorem \ref{nuder} (2) also says that $A$ and $B$ have the same number of (isoclasses of) indecomposable projective-injective modules, which is precisely $|E|=|\hat{E}|$ by the previous claim.
This implies that $\hat{E}$ must be the basic direct sum of all projective-injective $B$-modules.

Another key property of $A$ is that domdim$(A)\geq 2$.
By Theorem \ref{nuder} (2), almost $\nu$-stable derived equivalences preserve dominant dimension, so we have domdim($B$)$\geq 2$.
This means that there is an idempotent $f$ of $B$ so that $fB$ is a minimal faithful projective-injective module, which in turns gives $\add(fB)=\prinj B$.
Since $\hat{E}$ is basic and $\prinj B=\add(\hat{E})$ as we shown earlier, we get that $fB=\hat{E}$.
In particular, the centraliser $fBf\cong \End_B(\hat{E})$ is self-injective.
In fact, $\End_B(\hat{E})$ is symmetric, because it is derived equivalent to the symmetric algebra $eAe\cong \End_A(E)$.
This shows that $B$ is gendo-symmetric.
\end{proof}

We remark that Theorem \ref{nustable}, Corollary \ref{nudercrit}, and Theorem \ref{nuder-gendosymm} also hold for Morita algebras (i.e. endomorphism algebra of generator over self-injective algebra) by replacing ``gendo-symmetric" with ``Morita" in the statement.
The proof of the first two applies verbatim to the Morita algebra setting.
The Morita algebra version of Theorem \ref{nuder-gendosymm} can be proved using a similar method as the one we have shown with careful tweaks.
In \cite{FHK}, the authors proved the case in a more general setting when $A,B$ are both Morita algebra with at most one non-injective indecomposable projective module.

\subsection{The case of special gendo-Brauer trees}\label{subsec-nuder-sgBT}
The following is the gendo-symmetric version of the fact that representation-finite symmetric biserial algebras are invariant under derived equivalence. 
\begin{theorem}\label{thm-nuder-rept}
An algebra almost $\nu$-stable derived equivalent to a special gendo-Brauer tree algebra is also a special gendo-Brauer tree algebra.
Moreover, for a special gendo-Brauer tree algebra $A\cong \End_B(B\oplus M)$, where $B$ has $n$ isomorphism classes of simples and exceptional multiplicity $m$, there exists a symmetric Nakayama algebra $B'$ of Loewy length $mn+1$ with $n$ isomorphism classes of simples, and a subsequence $(x_1,x_2,\ldots,x_r)$ of $(0,1,\ldots,2n-1)$ satisfying $x_{i+1}-x_i \not\equiv 1$ mod $2n$ for all $i\in\mathbb{Z}/r\mathbb{Z}$, such that $A$ is almost derived $\nu$-stable equivalent to 
\begin{align}
\End_{B'}\left(B' \oplus \bigoplus_{i=1}^{r}{\Omega^{x_i}(S_0)}\right).
\end{align}
\end{theorem}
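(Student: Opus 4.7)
The strategy is to reduce the theorem to a concrete computation in the stable module category of a symmetric Nakayama algebra, via Theorem \ref{nustable}, Corollary \ref{nudercrit}, and Rickard's classification of Brauer tree algebras up to derived equivalence.

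For the first assertion, suppose $C$ is almost $\nu$-stable derived equivalent to $A \cong \End_B(B \oplus M)$, where $(B,M)$ realises $A$ as a special gendo-Brauer tree algebra. Apply Theorem \ref{nustable} to obtain a symmetric algebra $B''$ (derived equivalent to $B$) and an induced stable equivalence $\overline{F}: \ul{\mod} B \to \ul{\mod} B''$ with $C \cong \End_{B''}(B'' \oplus \overline{F}(M))$. Rickard's theorem gives that $B''$ is a Brauer tree algebra. Since stable equivalences preserve the stable Auslander--Reiten quiver and stable Hom spaces, hook modules (characterised as boundary modules, Proposition \ref{hookmod}) are sent to hook modules and pairwise stable orthogonality is preserved. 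Hence $C$ is a special gendo-Brauer tree algebra.

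For the second assertion, invoke Rickard's theorem to pick a symmetric Nakayama algebra $B'$ with $n$ simples and Loewy length $mn+1$ that is derived equivalent to $B$. By \cite{Asa,Dug}, every stable equivalence between representation-finite symmetric algebras is induced by a derived equivalence, so Corollary \ref{nudercrit} yields that $A$ is almost $\nu$-stable derived equivalent to $\End_{B'}(B' \oplus \overline{F}(M))$ for an induced stable equivalence $\overline{F}$. A direct calculation using \eqref{eq-Omega} shows $\Omega^{2k}(S_0) \cong S_k$ and $\Omega^{2k+1}(S_0) \cong \rad(e_k B')$ (indices read modulo $n$), so the $2n$ hook modules of $B'$ form a single $\Omega$-orbit $\{\Omega^x(S_0) : x \in \mathbb{Z}/2n\mathbb{Z}\}$. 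Since $\overline{F}$ preserves hook modules, one may write $\overline{F}(M) \cong \bigoplus_{i=1}^r \Omega^{x_i}(S_0)$ for some $0 \leq x_1 < \cdots < x_r \leq 2n-1$.

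The remaining combinatorial check is that pairwise stable orthogonality of the summands of $\overline{F}(M)$ corresponds to the cyclic adjacency condition. Inspecting the forward hammock of $S_0$, the only hook module other than $S_0$ itself receiving a non-zero stable map from $S_0$ is $\Omega(S_0)$; translating by the self-equivalence $\Omega$ gives $\ul{\Hom}_{B'}(\Omega^x(S_0), \Omega^y(S_0)) \neq 0$ (for distinct $x,y$ modulo $2n$) iff $y - x \equiv 1 \pmod{2n}$. Pairwise stable orthogonality thus translates to $x_j - x_i \not\equiv \pm 1 \pmod{2n}$ for all $i \neq j$, which for an increasing subsequence in $\{0, 1, \ldots, 2n-1\}$ is precisely the cyclic condition $x_{i+1} - x_i \not\equiv 1 \pmod{2n}$ for $i \in \mathbb{Z}/r\mathbb{Z}$. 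The main conceptual point is the use of Corollary \ref{nudercrit} to trade derived data between gendo-symmetric algebras for stable data between their symmetric bases; the rest is routine book-keeping on the $\Omega$-orbit of a simple in a symmetric Nakayama algebra.
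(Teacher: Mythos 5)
Your proposal is correct and follows essentially the same route as the paper's proof: pass to the symmetric base via Theorem \ref{nustable}, use that stable equivalences preserve hook modules and pairwise stable orthogonality, invoke Rickard's theorem to replace $B$ by a Brauer star $B'$, transport the generator by Theorem \ref{nuder} (3) (equivalently the easy direction of Corollary \ref{nudercrit}), and read off the sequence from the $\Omega$-orbit $\Omega^{2k}(S_0)\cong S_k$, $\Omega^{2k+1}(S_0)\cong \rad(e_kB')$. The only point to make explicit is that Theorem \ref{nustable} does not by itself produce the symmetric algebra $B''$: you must first apply Theorem \ref{nuder-gendosymm} to conclude that $C$ is gendo-symmetric, hence of the form $\End_{B''}(B''\oplus M_2)$ with $B''$ symmetric, before Theorem \ref{nustable} can be invoked --- exactly the step the paper inserts.
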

\begin{proof}
Suppose now we have a special gendo-Brauer tree algebra $A$ of the form $\End_B(B\oplus M)$ for some Brauer tree algebra $B$ and a special direct sum $M$ of hook modules.

Let $C$ be an algebra almost $\nu$-stable derived equivalent to $A$.
Then $C$ is also gendo-symmetric by Theorem \ref{nuder-gendosymm}.
Up to Morita equivalence, we can write $C=\End_{B'}(B' \oplus M')$ for some symmetric algebra $B'$ and some module $M'$ without projective direct summands.
By Theorem \ref{nustable}, there is a derived equivalence $F$ between $B$ and $B'$ which induces a stable equivalence $\overline{F}$ with $\overline{F}(M)\cong M'$.

Recall that hook modules are precisely modules which lie on the boundaries of the stable Auslander-Reiten quiver.
Since stable equivalences preserve the stable Auslander-Reiten structure, it now follows that $M'$ must also be a direct sum of hook modules.
Moreover, by the definition of being a special direct sum, speciality is preserved under stable equivalences.
This means that $M'$ is special and $C$ is a special gendo-Brauer tree algebra.

For the last statement, first recall that every Brauer tree algebra is derived equivalent to a symmetric Nakayama algebra \cite{Ric}.
Therefore, we can take $B'$ to be a symmetric Nakayama algebra.
The characterisation of the sequence follows from Theorem \ref{thm-special-gBT} (4) and Proposition \ref{extnak}.
\end{proof}

Recall that Brauer tree algebras are precisely algebras derived equivalent to a symmetric Nakayama algebra.
It is therefore natural to refine the statement above to describe more specifically algebras which are almost $\nu$-stable derived equivalent to a gendo-symmetric Nakayama algebra.

\begin{theorem}\label{thm-nuder-naka}
The following are equivalent:
\begin{enumerate}[label=\upshape(\arabic*)]
\item $C$ is a basic algebra almost $\nu$-stable derived equivalent to a gendo-symmetric Nakayama algebra.

\item $C$ is a special gendo-Brauer tree algebra $\Gamma_{\ul{G},\ul{m}}^W$ of pure parity.

\item $C\cong \End_B(B\oplus M)$, where $B$ is a basic Brauer tree algebra, and $M$ is a basic direct sum of hook modules in a single $\tau$-orbit.

\item There is a Brauer tree algebra $B=\Lambda_{\ul{G},\ul{m}}$ with $n$ simple modules (up to isomorphism) and a subsequence $(x_1,x_2,\ldots,x_r)$ of $(0,1,\ldots,n-1)$ so that 
\begin{align} 
C\cong \End_B\left(B\oplus \bigoplus_{i=1}^r \tau^{x_i}(M)\right) \notag
\end{align}
for some hook module $M$.

\item Same statement as {\rm (4)} with an additional condition that $M$ is either the simple top or the radical of a uniserial projective module.
\end{enumerate}
\end{theorem}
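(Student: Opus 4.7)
The plan is to establish the cycle of implications $(1) \Leftrightarrow (2) \Leftrightarrow (3) \Leftrightarrow (4) \Leftrightarrow (5)$, with the combinatorial equivalences $(2) \Leftrightarrow (3) \Leftrightarrow (4) \Leftrightarrow (5)$ obtained mainly from Proposition \ref{hookmod}, and the derived-equivalence characterisation $(1) \Leftrightarrow (2)$ built from Theorem \ref{thm-nuder-rept}, Theorem \ref{nustable}, Theorem \ref{nuder}(3), Theorem \ref{nuder-gendosymm}, and Corollary \ref{nakchara}.

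For $(2) \Leftrightarrow (3)$, I would directly invoke the final statement of Proposition \ref{hookmod}: two hook modules $M(x,y)$ and $M(w,z)$ lie in the same $\tau$-orbit precisely when $(x|y)$ and $(w|z)$ are associated to vertices of the same parity, so pure parity of $W$ translates immediately to the summands of $\bigoplus_{(x|y) \in W} M(x,y)$ lying in a single $\tau$-orbit. For $(3) \Leftrightarrow (4)$, the $\tau$-orbit of any hook module over a Brauer tree algebra with $n$ simples has length $n$ (read off from the stable Auslander--Reiten quiver $\mathbb{Z}A_{nm}/\langle\tau^n\rangle$), so fixing a basepoint $M$ in that orbit parametrises the basic direct sums of hook modules in the orbit by subsequences of $(0,1,\ldots,n-1)$. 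For $(4) \Leftrightarrow (5)$, the only non-trivial direction uses Proposition \ref{hookmod}(2): any hook module equals $\Omega^\ell(S)$ for some simple $S$ with uniserial projective cover $P_S$, and using $\Omega^2 \cong \tau$ together with $\Omega(S) \cong \rad P_S$ when $P_S$ is uniserial, one rewrites $M \cong \tau^k(S)$ or $M \cong \tau^k(\rad P_S)$ and absorbs the $\tau^k$ into a relabelling of the indices $x_i$.

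For $(1) \Rightarrow (2)$, note first that $C$ is gendo-symmetric by Theorem \ref{nuder-gendosymm} and is a special gendo-Brauer tree algebra by Theorem \ref{thm-nuder-rept}, so write $C \cong \End_B(B \oplus M)$ with $B$ a basic Brauer tree algebra. Theorem \ref{nustable}, applied to the almost $\nu$-stable derived equivalence between $C$ and a gendo-symmetric Nakayama algebra $\End_{B'}(B' \oplus M')$, produces a stable equivalence $\overline{F}: \ul{\mod}B' \to \ul{\mod}B$ with $\overline{F}(M') \cong M$. By Corollary \ref{nakchara}, $M' \cong \bigoplus_i \rad(e_{x_i}B')$ whose summands are of the form $M(x_i, x_i+1)$, all associated to the central vertex of the Brauer star underlying $B'$ and hence lying in a single $\tau$-orbit. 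Since stable equivalences commute with $\tau$ up to natural isomorphism, the summands of $M$ also lie in a single $\tau$-orbit, and the already-established equivalence $(2) \Leftrightarrow (3)$ yields pure parity of $W$.

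For $(2) \Rightarrow (1)$, Theorem \ref{thm-nuder-rept} provides an almost $\nu$-stable derived equivalence between $C$ and some $\End_{B'}(B' \oplus \bigoplus_i \Omega^{x_i}(S_0))$ over a symmetric Nakayama $B'$. The pure parity hypothesis, together with the fact that the $\Omega$-orbit of $S_0$ alternates between the $\tau$-orbit of simples and the $\tau$-orbit of radicals of uniserial projectives, forces all exponents $x_i$ to have the same parity. If all $x_i$ are odd, each $\Omega^{x_i}(S_0) \cong \tau^{(x_i-1)/2}(\rad e_0 B')$ is the radical of a uniserial projective, and Corollary \ref{nakchara} identifies the algebra as a gendo-symmetric Nakayama; if all $x_i$ are even, Theorem \ref{nuder}(3) lets us shift the generator by $\Omega$ to $\bigoplus_i \Omega^{x_i+1}(S_0)$ without leaving the almost $\nu$-stable derived equivalence class, reducing to the previous case. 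The main delicate point is precisely this parity-shift: Corollary \ref{nakchara} only recognises gendo-symmetric Nakayama algebras when the non-projective generators are radicals of uniserial projectives (not when they are simples), so one has to invoke the $\Omega$-shift from Theorem \ref{nuder}(3) at the right moment; once this is done, the equivalences follow cleanly from the already established machinery.
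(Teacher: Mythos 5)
Your proposal is correct and follows essentially the same route as the paper: the combinatorial equivalences $(2)\Leftrightarrow(3)\Leftrightarrow(4)\Leftrightarrow(5)$ via Proposition \ref{hookmod} and $\Omega^2\cong\tau$, and the link to $(1)$ via Theorem \ref{nuder-gendosymm}, Theorem \ref{nustable}, Theorem \ref{nuder}(3), Corollary \ref{nakchara} and the argument of Theorem \ref{thm-nuder-rept}, the only cosmetic difference being that you attach the derived-equivalence leg at item $(2)$ where the paper attaches it at $(5)$. Your explicit treatment of the parity/$\Omega$-shift step is a point the paper leaves implicit in the phrase ``the same argument as Theorem \ref{thm-nuder-rept}'', and it is handled correctly.
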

\begin{proof}
(2)$\Leftrightarrow$(3): Combine Theorem \ref{thm-special-gBT} (4) with the final statement of Proposition \ref{hookmod}.

(3)$\Leftrightarrow$(4): Clear.

(4)$\Leftrightarrow$(5): By (\ref{eq-Omega}) and Proposition \ref{hookmod}, the simple top and the radical of a uniserial projective $B$-module are hook modules lying in distinct $\tau$-orbits.

(5)$\Leftrightarrow$(1): By Corollary \ref{nakchara}, a gendo-symmetric Nakayama algebra is isomorphic to \newline $\End_{B'}(B'\oplus \bigoplus_{i=1}^r \rad e_{x_i}B')$ for a Nakayama algebra $B'$ and a subsequence $(x_1,x_2,\ldots,x_r)$ \newline of $(0,1,\ldots,n-1)$.
Since $\tau\cong \Omega^2$ for symmetric algebras, the equivalence of the two statements now follows from the same argument as Theorem \ref{thm-nuder-rept}.
\end{proof}

In general, there are plenty of derived equivalences which are not almost $\nu$-stable.
This holds true even in the case of special gendo-Brauer tree algebra (and also in the pure parity case).
The following is one such example.

\begin{example}\label{eg-dereq-gBT}
Let $A$ be the special gendo-Brauer tree algebra $\Gamma_{\ul{G},\ul{m}}^W$ associated to $(\ul{G},W,\ul{m})$ shown in the following.
\[
\xymatrix@C=40pt{
& & \circ\ar@{-}[d]_{2}& \\
\ar@{{}{+}{}}[r]^{(0|0)}& \circ \ar@{-}[r]^{0} & \circ \ar@{{}{+}{}}[r]^{(1|2)} & \\
& & \circ\ar@{-}[u]^{1}&
}
\]
with $\ul{m}\cong 1$.
There is a tilting complex $T:=T_x \oplus A/e_{(0|0)}A$, where $T_x$ is the complex 
$( e_{(0|0)}A  \longrightarrow  e_0 A)$ whose differential is the natural embedding of $e_{(0|0)}A$ as a submodule of $e_0A$.
This complex can be obtained by performing an irreducible mutation of $A$ with respect to $e_{(0|0)}A$ in the sense of \cite{AihIya}.
In particular, it suffices to check that $\Hom_{K^b(\proj A)}(T,T[-1])=0$ in order to show that it is tilting.
Indeed, the only map from $A/(e_{(0|0)}A)$ to the degree $-1$ component of $T_x$ is the map from $e_0A$ to the radical of $e_{(0|0)}A$, which does not compose with the natural embedding to a zero map.
Therefore, we have $\Hom(A/(e_{(0|0)}A), T_x[-1])=0$, which implies $\Hom(T,T[-1])=0$.
Clearly, this is not a tilting complex associated to some almost $\nu$-stable derived equivalence as both degree $-1$ and degree 0 terms have projective non-injective summands.

Note that the endomorphism ring of $T$ in this example is the gendo-symmetric Nakayama algebra with Kupisch series $[6,5,6,5,6]$.
In particular, this shows that there exist more general derived equivalences between gendo-symmetric algebras.
\end{example}

\subsection{Counting almost $\nu$-stable derived equivalence classes}\label{subsec-counting}
Theorem \ref{thm-nuder-rept} provides us a way to count the number of almost $\nu$-stable derived equivalence classes of special gendo-Brauer tree algebras.
We turn this counting problem into a well-known problem in enumerative combinatorics on counting circular codes.
For completeness, we give a brief review on the subject based on \cite[Ch 8.3.3]{PerRes}, then we will explain how we obtain the counting formulae.

Let $\mathcal{A}$ be an alphabet (i.e. a finite set).
Denote by $\mathcal{A}^{*}$ the set of all words (of finite length) in this alphabet.
We denote the trivial (or empty) word by $1$.
Recall that two words $x, y$ are called \emph{conjugate} if there exists words $u,v$ such that $x=uv$ and $y=vu$; intuitively, this means that $x$ and $y$ are cyclic shift of each other.
A \emph{necklace} is a conjugacy class of words, i.e. a (class of a) word up to cyclic permutation of letters.
A necklace containing the word $x_1x_2\cdots x_n$ can be visualised by putting $x_i$ counter-clockwise around a regular convex $n$-gon.

A subset $X \subseteq \mathcal{A}^{*}$ is called a \emph{code} if the equality of words $x_1 x_2 \cdots x_n = y_1 y_2 \cdots y_m$ for $x_i,y_j \in X$ implies $n=m$ and $x_i=y_i$ for each $i$.
Denote by $X^{*}$ the set of words generated by $X$, i.e. the set of all words of the form $x_1 x_2 \cdots x_n$ with $x_i \in X$.
The \emph{generating function} of a code $X$ is 
\[
u_X(z):=\sum_{k=0}^{\infty}{u_k z^k},
\]
where $u_n$ be the number of words in $X$ of length $n$.
Note that we always count the length of a word with respect to the original alphabet $\mathcal{A}$.
We also define a function
\[
s_X(z):=\sum_{k=0}^{\infty}{s_k z^k},
\]
where $s_n$ be the number of words of length $n$ with a conjugate in $X^{*}$.

A \emph{circular code} $X$ is a code which satisfies the property that
for $x_1,x_2,\ldots,x_n,y_1,y_2,\ldots,y_m\in X$ and $p,s\in A^{*}$ with $s\neq 1$, 
the equalities $sx_2 x_3 \cdots x_n p = y_1 y_2 \cdots y_m $ of words and $x_1=ps$ imply that $n=m$, $p=1$, and $x_i=y_i$ for all $i$.
Roughly speaking, it is a code $X$ such that any necklace has a ``unique factorisation" in words of $X$.
The functions $s_X(z)$ and $u_X(z)$ for a circular code $X$ satisfy the following equation:
\[s_X(z)=\frac{z u_X'(z)}{1-u_X(z)},\]
where $u_X'(z)$ is the derivative of the polynomial $u_X(z)$.
See \cite[Theorem 8.3.10]{PerRes} for a proof.
In particular, if the generating function $u_X(z)$ is a rational function, then so is $s_X(z)$.
In which case, one can write down a recurrence equation which gives explicit formulae of the coefficients using well-known techniques (see \cite[4.1]{Sta} for example).

Let $c_n$ be the number of necklaces, counting up to rotational symmetry, of length $n$ with a conjugate in $X^*$ for a circular code $X$.
Then we have
\[
c_n = \frac{1}{n}\sum_{d|n}\phi\left(\frac{n}{d}\right)s_d,
\]
where $\phi$ is the Euler-totient function.
See \cite[Prop 8.3.6]{PerRes} for the case when $X=\{w,b\}$; the argument therein can be generalised for circular codes as noted at the end of the section in \emph{loc. cit.}.

\begin{example}\label{countCode}
As far as our goal is concern, we are only interested in counting the following two circular codes, where the alphabet $\mathcal{A}=\{w,b\}$ is given by two letters: white $w$ and black $b$.

\begin{enumerate}
\item Consider the circular code $X=\{w,b\}$.
The generating function is $u_X(z)=2z$, and so $s_X(z)=\frac{z u_X'(z)}{1-u_X(z)}=\frac{2z}{1-2z}=\sum_{k=1}^\infty 2^kz^k$.
The coefficient $s_n$ is now $2^n$, which means that
\[
c_n = \frac{1}{n}\sum_{d|n}\phi\left(\frac{n}{d}\right) 2^d.
\]

\item Consider now the circular code $X=\{w,bw\}$; see also \cite[Example 8.3.11]{PerRes}.
The generating function is now $u_X(z)=z+z^2$, which yields $s_X(z)=\frac{z(1+2z)}{1-z-z^2}$.

The coefficients $s_n$ of $s_X(z)$ then satisfies the following recurrence relation: 
\[ s_1=1, \quad s_2=3, \quad s_{n+2}=s_{n+1}+s_n \;\forall n\geq 1.\]
Therefore, $s_n$ is just the ($n$-th) \emph{Lucas number} (see, for example, \cite{Va})
\[
L_n:=\left(\frac{1+\sqrt{5}}{2}\right)^n +\left(\frac{1-\sqrt{5}}{2}\right)^n.
\]
Now, the number of necklaces in $X^*$ of length $n$ up to rotational symmetry is 
\[
c_n = \frac{1}{n} \sum_{d | n}{\phi\left(\frac{n}{d}\right)L_d}.
\]
\end{enumerate}
\end{example}

\begin{proposition}\label{prop-count}
For a given positive integer $m\geq 1$, we have the following formulae.
\begin{enumerate}[label={\upshape(\arabic*)}]
\item The number of almost $\nu$-stable derived equivalence (resp. Morita, resp. stable equivalence of Morita type) classes of gendo-symmetric Nakayama algebras with $n$ isomorphism classes of indecomposable projective-injective module and exceptional multiplicity $m$ is
\[
\frac{1}{n} \sum_{d | n}{\phi\left(\frac{n}{d}\right) 2^d}.
\]

\item The number of almost $\nu$-stable derived equivalence classes (resp. stable equivalence of Morita type) of special gendo-Brauer tree algebras with $n$ isomorphism classes of indecomposable projective-injective module and exceptional multiplicity $m$ is
\[
\frac{1}{2n} \sum_{d | 2n}{\phi\left(\frac{2n}{d}\right) L_d},
\]
where $L_d$ is the $d$-th Lucas number.
\end{enumerate}
\end{proposition}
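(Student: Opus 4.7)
The plan is to combine the classifications in Corollary \ref{nakchara} (for part (1)) and Theorem \ref{thm-nuder-rept} (for part (2)) with the circular code counting of Example \ref{countCode}. In each case the equivalence classes of algebras correspond to orbits of certain subsets of $\mathbb{Z}/N\mathbb{Z}$ under a cyclic action, which is exactly the type of count delivered by Burnside's lemma in the circular code framework.

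For part (1), Corollary \ref{nakchara} identifies gendo-symmetric Nakayama algebras with $n$ indecomposable projective-injectives and exceptional multiplicity $m$ with subsets $S \subseteq \mathbb{Z}/n\mathbb{Z}$, via $S \mapsto \End_B(B \oplus \bigoplus_{i\in S}\rad e_iB)$ where $B$ is the symmetric Nakayama algebra with $n$ simples and Loewy length $nm+1$. For Morita equivalence, two subsets give equivalent algebras if and only if they differ by a rotation coming from an automorphism of $B$. For the stronger equivalences, I would apply Theorem \ref{nustable} to reduce the question to whether there is a stable equivalence of $B$ carrying one generator to the other; since the $\rad e_iB$ all lie in a single $\tau$-orbit of size $n$ and any stable equivalence commutes with $\tau$, its induced action on this orbit must be a rotation. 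Hence in all three cases the group acting on subsets is $\mathbb{Z}/n\mathbb{Z}$, and equivalence classes correspond to necklaces of length $n$ in the two-letter alphabet ``in/out''. With the circular code $X = \{w, b\}$ of Example \ref{countCode}(1) one has $s_n = 2^n$, yielding $\frac{1}{n}\sum_{d|n}\phi(n/d) 2^d$. The coincidence of stable-Morita-type with almost $\nu$-stable derived classes follows from Corollary \ref{nudercrit}.

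For part (2), Theorem \ref{thm-nuder-rept} shows that every almost $\nu$-stable derived equivalence class of special gendo-Brauer tree algebras with $n$ projective-injectives and multiplicity $m$ has a representative $\End_{B'}(B' \oplus \bigoplus_i \Omega^{x_i}(S_0))$ with $B'$ the symmetric Nakayama of type $(n,m)$ and $(x_1, \ldots, x_r)$ a subsequence of $(0, \ldots, 2n-1)$ whose cyclic differences are all $\neq 1$. This encodes the data as a subset $S \subseteq \mathbb{Z}/2n\mathbb{Z}$ with no two cyclically adjacent elements. By Theorem \ref{nustable} two such $S$ parametrise equivalent algebras iff a stable auto-equivalence of $B'$ interchanges the corresponding generators; since the $\Omega$-orbit of $S_0$ exhausts all hook modules and has size $2n$, and both rotation (which shifts by $2$ in $\Omega$-indexing, as $S_i = \Omega^{2i}(S_0)$ maps to $S_{i+1} = \Omega^{2i+2}(S_0)$) and $\Omega$ itself (which shifts by $1$) act on it, the group action on subsets is the full cyclic group $\mathbb{Z}/2n\mathbb{Z}$. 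Interpreting ``included'' as $b$ and ``excluded'' as $w$, the admissible subsets correspond to cyclic words where every $b$ is immediately followed by a $w$, which are precisely the words over the circular code $X = \{w, bw\}$ of Example \ref{countCode}(2); since $s_{2n} = L_{2n}$, the orbit count is $\frac{1}{2n}\sum_{d|2n}\phi(2n/d)L_d$, and the same count works for stable-Morita-type classes by a second appeal to Corollary \ref{nudercrit}.

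The main technical obstacle is identifying the group action correctly in each case; in particular for part (2) one must verify that the stable auto-equivalences of $B'$ realise the full cyclic group $\mathbb{Z}/2n\mathbb{Z}$ on the $\Omega$-orbit of $S_0$ (rather than just the even shifts arising from Morita rotations or only the shifts from $\Omega$), which requires combining the rotation and the $\Omega$-action to obtain a shift by a generator of $\mathbb{Z}/2n\mathbb{Z}$. Once this identification and the adjacency/stable-orthogonality dictionary are set up, the rest of the proof is a direct translation into the circular code formalism already developed in Example \ref{countCode}.
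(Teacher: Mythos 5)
Your counting argument for the almost $\nu$-stable derived equivalence classes (and, in part (1), the Morita classes) is essentially the paper's own proof: normalise the representative via Corollary \ref{nakchara}, resp.\ Theorem \ref{thm-nuder-rept}, use Theorem \ref{nustable} to see that equivalences act on the relevant $\tau$-orbit, resp.\ $\Omega$-orbit, of hook modules by rotations (with $\Omega$ supplying the odd shifts and Morita rotations the even ones in part (2)), and then count necklaces over the circular codes $\{w,b\}$ and $\{w,bw\}$ exactly as in Example \ref{countCode}. That part of your proposal is correct and matches the paper.

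The gap is in the ``stable equivalence of Morita type'' assertions. Corollary \ref{nudercrit} compares almost $\nu$-stable derived equivalences of the gendo-symmetric algebras with stable equivalences between the underlying \emph{symmetric} algebras; it says nothing about a stable equivalence of Morita type between the two endomorphism algebras $\End_B(B\oplus M_1)$ and $\End_{B'}(B'\oplus M_2)$ themselves, so ``a second appeal to Corollary \ref{nudercrit}'' (and its use at the end of your part (1)) does not apply. What you do get for free, from Theorem \ref{nuder}(1), is that every almost $\nu$-stable derived equivalence induces a stable equivalence of Morita type, so each stable-Morita-type class is a union of almost $\nu$-stable derived classes and your formula is only an \emph{upper bound} for the number of stable-Morita-type classes. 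For the reverse inequality one must show that a stable equivalence of Morita type between two special gendo-Brauer tree algebras forces their combinatorial data to agree up to rotation; the paper obtains this from the lifting theorem of Hu and Xi, \cite[Thm 1.1]{HuXi3}, which lifts stable equivalences of Morita type between such algebras to (iterated) almost $\nu$-stable derived equivalences. Some input of this kind is indispensable here, and nothing in your proposal supplies it.
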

\begin{proof}
(1):  By our assumption and Corollary \ref{nakchara}, we can choose our representative so that it is given by $\End_B(B \oplus \bigoplus_{i=1}^r \tau^{y_i}(\rad e_0B))$ for a subsequence $(y_1,y_2,\ldots, y_r)$ of $(0,1,\ldots,n-1)$ and a symmetric Nakayama algebra $B$ with $n$ isomorphism classes of simples and multiplicity $m$.
Combining with Theorem \ref{thm-nuder-rept}, the gendo-symmetric Nakayama algebras determined by $(y_1,y_2,\ldots,y_r)$ and $(y_1',y_2',\ldots, y_s')$ are almost $\nu$-stable derived equivalence if, and only if, $r=s$ and there exists $k$ such that $y_i=y_i'+k$ for all $i\in\{1,2,\ldots,r\}$.
Moreover, it is easy (for example, using Kupisch series) to see that in such a case, the gendo-symmetric Nakayama algebras are also Morita equivalent (given by the automorphism which ``rotates" one quiver to another).

This characterisation means that we can parametrise a representative algebra by colouring the vertices $\{y_1,y_2,\ldots,y_r\}$ by black and the remaining by white in the underlying graph of the quiver (and forget the labelling of the vertices).
This is the same as specifying a bicoloured necklace with $n$ pearls up to rotational symmetry. 
Such configurations are counted by the number $c_n$ associated to the circular code $X=\{w,b\}$.
The statement for almost $\nu$-stable derived equivalence and Morita equivalence now follows from Example \ref{countCode} (1).
The statement for stable equivalence of Morita type then follows from applying \cite[Thm 1.1]{HuXi3} to special gendo-Brauer tree algebras.

(2): Similar to (1), given $m$ and $n$, the representative is given by a subsequence $\ul{x}:=(x_1,x_2,\ldots,x_r)$ of $(0,1,\ldots, 2n-1)$ up to ``constant shifts" in all entries.
Note that the requirement of being special means that $(i,i+1)$ is not a subsequence of $\ul{x}$.
In other words, counting equivalence classes is the same as counting bicoloured necklaces of length $2n$ associated to the circular code $X=\{w,bw\}$, up to rotational symmetry.
The statement for almost $\nu$-stable derived equivalence now follows from Example \ref{countCode} (2).
Again, the statement for stable equivalence of Morita type follows from \cite[Thm 1.1]{HuXi3}.
\end{proof}

\section{Homological properties of special gendo-Brauer tree algebras}\label{sec-hom-property}
We recall a classical combinatorial tool - the Green's walk around a Brauer tree \cite{Gre}.
Classically, this tool is used to describe the injective coresolutions of the hook modules of the associated Brauer tree algebra.
In the setting of special gendo-Brauer tree algebras, it turns out that we can use the Green's walk around a Brauer tree to write down a formulae for the Gorenstein and dominant dimension of special gendo-Brauer tree algebras.
We will also determine which special gendo-Brauer tree algebras have finite global dimension later in the section.
These formulae combine to classify special gendo-Brauer tree algebras which appear as higher Auslander algebras in the sense of \cite{Iya1,Iya2}.

\begin{definition}\label{def-GW}
Let $\ul{G}:=(G,\sigma)$ be a planar tree with $n$ edges.
The \emph{clockwise Green's walk} around $\ul{G}$ is the unique, up to cyclic permutation, sequence $(v_0,x_1,v_1,x_2,\ldots,x_{2n},v_{2n})$ of vertices $v_i$ and edges $x_j$ of $G$ so that
\begin{itemize}
\item $v_{i-1},v_{i}$ are the two distinct endpoints of $x_i$ for all $i\in\{1,2,\ldots,2n\}$,
\item $x_{i+1}$ is the predecessor of $x_i$ around $v_i$ for all $i\in\{1,2,\ldots,2n\}$
\end{itemize} 
for all $i\in\{1,2,\ldots,2n\}$, where $x_{2n+1}:=x_1$.
\end{definition}

We will always use the following convention unless otherwise stated: addition in the subscripts of vertices and edges appearing in a Green's walk is always taken modulo $2n$.

\begin{remark}
For a clockwise Green's walk as in the definition, $(x_{i+1}\to x_i)$ is an arrow in $Q_{\ul{G}}$ for all $i\in \{1,2,\ldots, 2n\}$.
Conversely, for $(x|y)\in H=H_{\ul{G}}$, there exists a unique $i$ so that $x=x_{i+1},y=x_{i}$ and the associating vertex of $(x|y)$ is precisely $v_i$.
In particular, we remark that the term `clockwise' emphasises that we ``walk around $\ul{G}$" in the opposite direction of the cyclic orderings (direction of arrows).
\end{remark}

\begin{example}
Consider the planar tree from Example \ref{eg-BT1}.
\[
\xymatrix@R=10pt@C=14pt{
& & & & & & & &\\
& & &\circ \ar@{-}[d] & & & & &\\
& & & 0 \ar@{-}[d]\ar@/^/[rd] \ar `ul[uu] `[uu] `[] [] & & & & &\\
& \circ\ar@{-}[r]& 1\ar@/^/[ru]\ar@{-}[r] \ar `[ll] `[ll] `[] []   & \circ \ar@{-}[r] & 2\ar@{-}[r]\ar@/^1.7pc/[ll]\ar@/^1.7pc/[rr]  & \circ\ar@{-}[r]& 3\ar@{-}[r]\ar@/^1.7pc/[ll] \ar `ru[rr] `[rr] `[] [] & \circ  &
}
\]
Let $u_i$ be the vertex attached to edge $i$ furthest away from the valency 3 vertex $u$.
The clockwise Green's walk is 
\[
(v_0,x_1,\ldots,x_{8},v_8)=(u,0,u_0,0,u,2,u_2,3,u_3,3,u_2,2,u,1,u_1,1,u).
\]
The arrows in the picture represent a subsequence $(x_i,v_i,x_{i+1})$, which are the arrows in $Q_{\ul{G}}$ reversed.
\end{example}

Fix a planar tree $\ul{G}$ with any multiplicity $\ul{m}$.
Let $B$ be a Brauer tree algebra associated to $(\ul{G},\ul{m})$.
For $(x|y)\in H$, we denote by $I_{x,y}^\bullet = (I_{x,y}^i,d_{x,y}^i)_{i\geq 0}$ the minimal injective coresolution of the hook module $M(x,y):=(x\to y)B$.

\begin{lemma}\label{lem-GW-resol}
Let $B$ be the Brauer tree algebra $\Lambda_{\ul{G},\ul{m}}$, and $(v_0,x_1,v_1,x_2,\ldots,x_{2n},v_{2n})$ be the clockwise Green's walk around $\ul{G}$.
For any $i \in\{1,2,\ldots,2n\}$ and non-negative integer $k$, we have 
\begin{enumerate}[label=\upshape(\arabic*)]
\item $I_{x_{i+1},x_i}^k = e_{x_{i+1+k}}B$,
\item $d_{x_{i+1},x_i}^k$ is given by left-multiplying the arrow $(x_{i+k+1}\to x_{i+k})$ in $Q_{\ul{G}}$,
\item the $k$-th cosyzygy $\Omega^{-k}(M(x_{i+1},x_i))$ is $M(x_{i+k+1},x_{i+k})$.
\end{enumerate}
\end{lemma}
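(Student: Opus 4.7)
The plan is to prove all three parts simultaneously by induction on $k$, with the base case $k=0$ carrying the entire representation-theoretic content and the induction step being a formal consequence of the cyclic shift-invariance of the Green's walk.

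For the base case, part (3) is tautological. For (1), I would read off the socle of the uniserial hook module $M(x_{i+1},x_i)$ using Proposition \ref{hookmod} (5): its composition factors correspond to edges incident to the associating vertex $v_i$ of $(x_{i+1}|x_i)$, traversed for $m_{v_i}$ rounds in the cyclic order starting from $x_i$ (the top), so the socle is $S_{x_{i+1}}$. Since $B$ is symmetric, $E(S_{x_{i+1}})=e_{x_{i+1}}B$, establishing $I^0=e_{x_{i+1}}B$. To identify $\Omega^{-1}(M(x_{i+1},x_i))=I^0/M(x_{i+1},x_i)$, I would apply the isomorphism from the proof of Proposition \ref{hookmod} (3)$\Leftrightarrow$(4), giving $e_{x_{i+1}}B/(x_{i+1}\to x_i)B\cong (z\to x_{i+1})B$ for the unique $z$ with $(z\to x_{i+1}\to x_i)\in I$. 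This relation forces $(z|x_{i+1})$ and $(x_{i+1}|x_i)$ to have associating vertices of different parity; since $(x_{i+1}|x_i)$ is associated to $v_i$, the associating vertex of $(z|x_{i+1})$ is the other endpoint $v_{i+1}$ of $x_{i+1}$, and by Definition \ref{def-GW} $z$ is precisely the predecessor $x_{i+2}$ of $x_{i+1}$ around $v_{i+1}$. Hence $\Omega^{-1}(M(x_{i+1},x_i))\cong M(x_{i+2},x_{i+1})$, and the differential at this stage, being the composition of the quotient $I^0\twoheadrightarrow\Omega^{-1}(M)$ with the inclusion $\Omega^{-1}(M)\hookrightarrow I^1$, reads off as left-multiplication by the corresponding arrow via the explicit form of the isomorphism above.

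For the induction step, the key observation is that $\Omega^{-k}(M(x_{i+1},x_i))=M(x_{i+k+1},x_{i+k})$ is itself a hook module sitting in a shifted position of the same Green's walk. Applying the base-case argument to this hook module (legitimately, because the walk is cyclic and all its data translate by the index shift) yields the identification of $I^{k+1}$, $\Omega^{-(k+1)}(M)$, and the differential $d^{k+1}$. Splicing with the induction hypothesis completes all three claims.

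The only real obstacle is careful combinatorial bookkeeping, namely distinguishing ``predecessor around $v_i$'' from ``predecessor around $v_{i+1}$'' and aligning the direction convention of arrows in $Q_{\ul{G}}$ with the composition convention for paths, so that ``left-multiplication by an arrow'' lands in the correct injective summand at each step. Once this alignment is pinned down for the base case via the Green's walk combinatorics, the induction is essentially mechanical.
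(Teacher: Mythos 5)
Your proposal is correct and is essentially the paper's argument: the paper's proof is a one-line citation of the isomorphism $e_xB/(x\to y)B\cong(z\to x)B$ established in the proof of (3)$\Leftrightarrow$(4) of Proposition \ref{hookmod}, which is exactly the cosyzygy step you iterate, matched with the Green's-walk predecessor combinatorics of Definition \ref{def-GW}. Your write-up simply makes the induction and the socle/injective-envelope bookkeeping explicit, which the paper leaves implicit.
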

\begin{proof}
Follows from the proof of the equivalence between (3) and (4) in Lemma \ref{hookmod}.
\end{proof}

We can use Lemma \ref{lem-GW-resol} to describe the minimal injective coresolutions of the indecomposable projective non-injective module of a special gendo-Brauer algebra $A$ associated to $(\ul{G},\ul{m})$ and special subset $W$ of $H$ as follows.
For simplicity, we assume without loss of generality that all algebras are basic.
In particular, we fix $A:=\Gamma_{\ul{G},\ul{m}}^W$, which is a quotient of a Brauer tree algebra $C:=\Lambda_{\ul{G}^W,\ul{m}^W}$.

Let $(v_0,x_1,v_1,\ldots,v_{2n})$ be the clockwise Green's walk of the enlarged planar tree $\ul{G}^W$.
Note that we warp around and go back to $v_0$ if we reach $v_{2n}$ while going along the clockwise Green's walk.

Each $(x|y)\in W$ corresponds to a unique index $i$ so that $x=x_{i+2}$ and $y=x_{i-1}$, or equivalently, $x_i=x_{i+1}$ is the edge in $\ul{G}^W$ which corresponds to the projective non-injective $A$-module $e_{(x|y)}A$.
Let $\omega$ be the set of all the indices which correspond to elements of $W$ in this sense.
Define $\eta(i)$ to be the first index after $i$ so that $x_{\eta(i)+1}=x_{\eta(i)}$ with $(x_{\eta(i)+2}|x_{\eta(i)-1})\in W$.
This defines a function $\eta:\omega\to \omega$.

\begin{lemma}\label{lem-injres}
Suppose $(x|y)\in W$ corresponds to $i\in \omega$.
Then the injective coresolution of the indecomposable projective non-injective $A$-module $e_{(x_{i+2}|x_{i-1})}A$ is
\[
e_{x_{i+2}}A \xrightarrow{(x_{i+3}\to x_{i+2})\cdot-} e_{x_{i+3}} A \rightarrow \cdots \rightarrow D(Ae_{(x_{\eta(i)+2}|x_{\eta(i)-1})})  \to 0.
\]
\end{lemma}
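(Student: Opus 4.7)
The plan is to compare the minimal injective coresolution of $e_{(x|y)}A$ in $\mod A$ with the corresponding coresolution over $C := \Lambda_{\ul{G}^W,\ul{m}^W}$, for which Lemma \ref{lem-GW-resol} is directly available, and to locate where the two diverge.

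First, I would identify $e_{(x|y)}A$ with a suitable hook module of $C$. By the construction $A = C/\soc_W$, one has $e_{(x|y)}A \cong e_{(x|y)}C/\soc(e_{(x|y)}C)$. Both this module and the hook $M_C(x_{i+2},x_{i+1})$, where $x_{i+1}=x_i$ denotes the new edge in $\ul{G}^W$ labelled by $(x|y)$, are uniserial of length $m_v(d_v+1)$, where $v$ is the associating vertex of $(x|y)$ in $\ul{G}$ and $d_v$ is its valency in $\ul{G}$. They share the top $S_{(x|y)}$ and socle $S_x = S_{x_{i+2}}$, so they are isomorphic as $C$-modules. Here we use that in $\ul{G}^W$ the cyclic ordering at $v$ is obtained from that in $\ul{G}$ by inserting the new edge between $x$ and $y$, so $x$ is the predecessor of that new edge around $v$.

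Next I would apply Lemma \ref{lem-GW-resol} to $M_C(x_{i+2},x_{i+1})$: this produces a $C$-injective coresolution whose $k$th term is $e_{x_{i+2+k}}C$, whose $k$th cosyzygy is $M_C(x_{i+2+k},x_{i+1+k})$, and whose differentials are left-multiplications by arrows of $Q_{\ul{G}^W}$. By the definition of $\eta(i)$, for every $0 \leq k < \eta(i)-i-2$ the edge $x_{i+2+k}$ is an original edge of $\ul{G}$, since the only bounce at a new edge strictly between $i$ and $\eta(i)$ is the one at $i$ itself. Consequently the $A$-injective envelope $D(Ae_{x_{i+2+k}})$ coincides with $e_{x_{i+2+k}}A \cong e_{x_{i+2+k}}C$, and none of the relevant differentials is a loop at a vertex in $W$, so they all survive the passage from $Q_{\ul{G}^W}$ to $Q_{\ul{G},W}$. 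A routine induction on $k$ then shows that the $A$-coresolution agrees termwise with the $C$-coresolution throughout this range.

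Finally, at $k = \eta(i)-i-2$ the edge $x_{i+2+k} = x_{\eta(i)}$ becomes the new edge labelled $(x',y') := (x_{\eta(i)+2}|x_{\eta(i)-1}) \in W$. Here the cosyzygy $\Omega^{-k}_C(e_{(x|y)}A) \cong M_C((x'|y'),y')$ is uniserial of length $m_{v'}(d_{v'}+1)$ with top $S_{y'}$ and socle $S_{(x'|y')}$; by the same uniseriality argument as in the first step, it is isomorphic to $D(Ae_{(x'|y')}) \cong \rad(e_{(x'|y')}C)$. Since $D(Ae_{(x'|y')})$ is already the injective envelope of its own socle $S_{(x'|y')}$ in $\mod A$, the embedding is an isomorphism, and the $A$-coresolution terminates here in the claimed form. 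The main obstacle I expect is verifying the two uniseriality-based isomorphisms $e_{(x|y)}A \cong M_C(x_{i+2},x_{i+1})$ and $M_C((x'|y'),y') \cong D(Ae_{(x'|y')})$ as $C$-modules (rather than only as $K$-vector spaces) and carefully tracking the Green's walk at a vertex of $\ul{G}$ to which several edges in $W$ may be attached at once.
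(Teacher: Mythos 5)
Your proposal is correct and follows essentially the same route as the paper's proof: identify $e_{(x|y)}A$ with the hook $C$-module $M(x_{i+2},x_{i+1})$ via the embedding $\mod A\to\mod C$, track its injective coresolution with Lemma \ref{lem-GW-resol} along the clockwise Green's walk, and note that the $A$- and $C$-coresolutions coincide until the cosyzygy's socle sits at an edge coming from $W$, where that cosyzygy is itself the injective $A$-module $D(Ae_{(x_{\eta(i)+2}|x_{\eta(i)-1})})$. The only (harmless) slip is the length formula $m_v(d_v+1)$: when several elements of $W$ are attached to the same vertex $v$ it should be $m_v$ times the valency of $v$ in $\ul{G}^W$, but the two modules being compared still have equal length, so your uniseriality argument and the conclusion are unaffected.
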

\begin{proof}
Since $A\cong C/I$ for some ideal $I$ by definition, the image of the $A$-module $P := e_{(x_{i+2}|x_i)}A$ under the canonical fully faithful embedding $\mod{A}\to \mod{C}$ is the hook $C$-module $M:=M(x_{i+2},x_{i+1})$.

Using Lemma \ref{lem-GW-resol}, one can see that the injective coresolution of $P$ as $A$-module matches up with that of $M$ (as $C$-module) up to the point where the cosyzygy has (simple) socle corresponding to an element of $W$.
Observe that this element is precisely $x_{\eta(i)}$ and the cosyzygy $\Omega^{-\eta(i)+i+2}(M)$ is $M(x_{\eta(i)},x_{\eta(i)-1})$ as a $C$-module.
It is easy to see that the preimage of $M(x_{\eta(i)},x_{\eta(i)-1})$ under the aforementioned embedding is just the injective hull of the simple $A$-module corresponding to $x_{\eta(i)}$.
This gives the last non-zero term of the injective coresolution as claimed.
\end{proof}

\begin{proposition}\label{prop-gor-dom-dim}
Let $A$ be a non-symmetric special gendo-Brauer tree algebra associated to $(\ul{G},\ul{m},W)$ with $n$ isomorphism classes of simples.
Then we have
\begin{align}
\begin{array}{c}\mbox{\rm Gorenstein} \\ \mbox{\rm dimension of }A\end{array} & = \max\left\{\overline{\eta(i)-i}-2  \,\Big|\, i\in \omega\right\}, \notag \\ 
\begin{array}{c}\mbox{and }\;\;\mbox{\rm domdim}A\end{array} & =
\min\left\{\overline{\eta(i)-i}-2 \,\Big| \, i\in \omega\right\}, \notag
\end{align}
where $\overline{k}:=(k\text{ mod }2n)$ if $k \nmid 2n$, otherwise $\overline{k}:=2n$.
In particular, every special gendo-Brauer tree algebra is Iwanaga-Gorenstein and its Gorenstein dimension is independent of the exceptional multiplicity.
\end{proposition}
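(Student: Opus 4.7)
The plan is to compute the injective dimension and the dominant dimension of every indecomposable projective summand of $A_A$ directly from Lemma \ref{lem-injres}, then take the maximum and minimum respectively. Each indecomposable projective $A$-module is either projective-injective (contributing $0$ to the right injective dimension and $\infty$ to the dominant dimension, hence irrelevant to the max/min) or of the form $e_{(x|y)}A$ for some $(x|y)\in W$. The latter summands are parametrised by $i\in\omega$ via $(x|y)=(x_{i+2}|x_{i-1})$.

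For each such $i\in\omega$, Lemma \ref{lem-injres} presents the minimal injective coresolution as
\[
0 \to e_{(x_{i+2}|x_{i-1})}A \to e_{x_{i+2}}A \to e_{x_{i+3}}A \to \cdots \to e_{x_{\eta(i)-1}}A \to D(Ae_{(x_{\eta(i)+2}|x_{\eta(i)-1})}) \to 0.
\]
Since $\eta(i)$ is the first index after $i$ lying in $\omega$, none of the intermediate indices $i+2,i+3,\ldots,\eta(i)-1$ lie in $\omega$, and hence each intermediate term $e_{x_{i+2+k}}A$ is projective-injective in $A$; the final term is the injective hull of a simple labelled by an element of $W$ and is therefore not projective. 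Counting, there are $\overline{\eta(i)-i}-2$ projective terms followed by one non-projective injective term, so both the injective dimension and the dominant dimension of $e_{(x_{i+2}|x_{i-1})}A$ equal $\overline{\eta(i)-i}-2$. Taking the maximum over $i\in\omega$ yields the right injective dimension of $A$ and hence the stated Gorenstein dimension formula, while the minimum yields $\mathrm{domdim}(A)$.

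For the Iwanaga-Gorenstein statement, the max formula is a finite integer, so $A$ has finite right injective dimension. Since $A$ is representation-finite by Lemma \ref{RF-Bis}, the finitistic dimension conjecture holds for $A$, forcing the Gorenstein symmetry conjecture and so guaranteeing that left and right injective dimensions coincide. Independence from the exceptional multiplicity is then immediate: the index set $\omega$ and the function $\eta$ depend only on the planar tree $\ul{G}$ and the special subset $W$, with no input from $\ul{m}$.

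The main obstacle is the verification that the intermediate terms in the coresolution are genuinely projective-injective rather than merely projective, which reduces to the combinatorial fact that no index strictly between $i$ and $\eta(i)$ belongs to $\omega$; this is encoded in the definition of $\eta$. A secondary subtlety, needed for consistency with gendo-symmetricity ($\mathrm{domdim}\geq 2$), is to show $\overline{\eta(i)-i}\geq 4$ for every $i\in\omega$: if instead $\eta(i)=i+3$, then $(x_{i+2}|x_{i-1})$ and $(x_{i+5}|x_{i+2})$ would both lie in $W$ but sit at opposite endpoints of the common edge $x_{i+2}$, producing a path $(x_{i+5}\to x_{i+2}\to x_{i-1})\in I$ of the underlying Brauer tree algebra and contradicting speciality of $W$.
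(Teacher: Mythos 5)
Your proposal is correct and takes essentially the same route as the paper: both read off the injective (hence Gorenstein) dimension and the dominant dimension of the regular module by summing the coresolutions of Lemma \ref{lem-injres} over the indecomposable projectives, taking the maximum and minimum over $i\in\omega$, with the Iwanaga-Gorenstein claim resting on the well-definedness of Gorenstein dimension for representation-finite algebras recorded in the preliminaries. Your added checks (that the intermediate terms are genuinely projective-injective because no index strictly between $i$ and $\eta(i)$ lies in $\omega$, and that speciality of $W$ forces $\overline{\eta(i)-i}\geq 4$) are correct refinements of points the paper leaves implicit.
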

\begin{proof}
Taking the direct sum of the injective coresolutions over all indecomposable projective module over $A:=\Gamma_{\ul{G},\ul{m}}^W$, we obtain the minimal injective coresolution of the regular $A$-module.
The formulae can then be obtained using Lemma \ref{lem-injres}.
\end{proof}

\begin{example}
Consider the special gendo-Brauer tree algebra $A=\Gamma_{\ul{G},\ul{m}}^W$ in Example \ref{eg-BT2}.
The clockwise Green's walk (omitting the vertices) is
\[
(0,0,(2|0),(2|0),2,3,3,(2|3),(2|3),2,1,1)
\]
Therefore, the algebra $A$ is $5$-Iwanaga-Gorenstein with dominant dimension $3$.

Similarly, for the algebra in Example \ref{eg-naka}, we obtain the clockwise Green's walk 
\[
((0|1),0,0,4,4,3,3,(2|3),(2|3),2,2,1,1,(0|1)),
\]
which means that the algebra is $6$-Iwanaga-Gorenstein with dominant dimension $4$.
\end{example}

We now investigate the global dimension of special gendo-Brauer tree algebras.

\begin{lemma}\label{lem-gldim-bad}
If $A=\Gamma_{\ul{G},\ul{m}}^W$ is a special gendo-Brauer tree algebra which is not of pure parity, then $\mathrm{gldim} A=\infty$.
\end{lemma}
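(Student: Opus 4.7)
The plan is to produce an $A$-module of infinite projective dimension. Since $A$ is Iwanaga-Gorenstein by Proposition~\ref{prop-gor-dom-dim}, this will force $\mathrm{gldim}\,A = \infty$, because finite global dimension over an Iwanaga-Gorenstein algebra must equal the Gorenstein dimension, so any module exceeding this bound must have infinite projective dimension.

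First, I will reinterpret ``not of pure parity'' combinatorially via the Green's walk around $\ul{G}^W$. Consecutive vertices $v_{i-1}, v_i$ of the walk are the two distinct endpoints of the edge $x_i$ of $\ul{G}^W$, so their parities in $\ul{G}$ alternate with $i$. For any $i \in \omega$, the associating vertex in $\ul{G}$ of the corresponding $W$-element is $v_{i-1}$ (and equivalently $v_{i+1}$, since $x_i = x_{i+1}$ is a new leaf edge bouncing off a new leaf-vertex). Consequently, two $W$-elements at indices $i, j \in \omega$ share the same associating-vertex parity precisely when $j - i$ is even. Hence $W$ has pure parity if and only if $\eta(i) - i$ is even for every $i \in \omega$, and our hypothesis yields some $i_0 \in \omega$ with $d := \eta(i_0) - i_0$ odd.

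Second, I will combine Lemma~\ref{lem-injres} with Lemma~\ref{lem-GW-resol} via the gendo-symmetric description $A \cong \End_B(B \oplus M)$ given by Lemma~\ref{special-gBT}. Writing $(x|y)$ and $(x''|y'')$ for the $W$-elements at indices $i_0$ and $\eta(i_0)$ respectively, Lemma~\ref{lem-injres} gives a minimal injective coresolution of $P := e_{(x|y)}A$ of length $d-2$ ending in the injective non-projective $I := D(Ae_{(x''|y'')})$. Applying Lemma~\ref{lem-GW-resol} for the underlying Brauer tree $B$ then translates this to the stable isomorphism $\Omega_B^{-(d-2)}M(x,y) \cong M(x'',y'')$ between the corresponding hook $B$-modules. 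Since $d$ is odd, $d - 2$ is odd, so by the last statement of Proposition~\ref{hookmod} (together with $\tau_B \cong \Omega_B^2$), the two hook modules $M(x,y)$ and $M(x'',y'')$ lie in different $\tau_B$-orbits; yet both are direct summands of the generator $M$.

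Third, the odd-parity shift just established will be leveraged to construct an infinite projective resolution. Concretely, iterating $\Omega_B^{-(d-2)}$ and exploiting the periodicity of $\Omega_B$ on the stable Auslander-Reiten quiver of $B$, one obtains a chain of hook modules alternating between the two $\tau_B$-orbits, all of which remain within $\add(M)$. This yields non-zero $\underline{\Hom}_B(M, \Omega_B^{-k} M)$ — equivalently, $\Ext_B^k(M,M) \neq 0$ — for arbitrarily large odd $k$, violating the Ext-vanishing required of a cluster tilting object in the sense of the higher Auslander correspondence. Via the standard translation between projective resolutions over $A = \End_B(B \oplus M)$ and Ext groups over $B$ (using the Schur functor $(-)e : \mathrm{Dom}_2\,A \to \mod B$), this forces the projective resolution of the simple $A$-module $S_{(x|y)}$ to be infinite.

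The main obstacle is the final step: rigorously extracting an infinite projective resolution of the specific simple $S_{(x|y)}$ from the Ext-pattern above. The passage from the stable-category obstruction over $B$ to an explicit infinite resolution over $A$ requires tracking the syzygy chain $\Omega_A^{\bullet} S_{(x|y)}$ and verifying that the odd-parity shift prevents the chain from eventually landing in $\add(eA)$ (where $e$ is the idempotent with $eA$ the minimal faithful projective-injective module); this calls for a careful Loewy-structure analysis at each vertex of $\ul{G}^W$ incident to several $W$-edges, where the syzygies split off multiple $W$-simple summands which must be shown to persist rather than cancel.
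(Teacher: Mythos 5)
Your reduction of ``not pure parity'' to the existence of $i_0\in\omega$ with $d=\eta(i_0)-i_0$ odd is fine, as is the identification $\Omega_B^{-(d-2)}M(x,y)\cong M(x'',y'')$ via Lemma \ref{lem-injres} and Lemma \ref{lem-GW-resol}. But the decisive third step is a genuine gap, and the mechanism you propose for it does not work. First, iterating $\Omega_B^{-(d-2)}$ does \emph{not} produce a chain of hook modules ``all of which remain within $\add(M)$'': the gaps $\eta(i)-i$ vary with $i$, so $\Omega_B^{-(d-2)}M(x'',y'')$ is in general not a summand of $M$ at all. Second, the inference from $\Ext_B^k(M,M)\neq 0$ for arbitrarily large (odd) $k$ to infinite global dimension of $\End_B(B\oplus M)$ is invalid: over the self-injective algebra $B$ every non-projective module is $\Omega$-periodic, so such non-vanishing holds for \emph{every} generator $B\oplus M$ with $M$ non-projective --- including the pure-parity case $\ul{m}\equiv 1$, $|W|=1$, where the algebra has finite global dimension $2n$ (Proposition \ref{highaus}). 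There is no claim, and no reason, that $B\oplus M$ is cluster tilting, so the higher Auslander correspondence gives you nothing here. You acknowledge yourself that extracting an infinite projective resolution of $S_{(x|y)}$ is ``the main obstacle''; that obstacle is exactly the content of the lemma, and it is not addressed.

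For comparison, the paper's proof avoids all of this by writing down an explicit periodic $A$-module. Taking the two elements of $W$ whose associating vertices $u,v$ have different parity, and the (odd-length) path $u=v_0,x_1,\ldots,x_k,v_k=v$ in $\ul{G}$, one forms the map
\[
\phi:\bigoplus_{\text{even }i=0}^{k+1} e_{x_i}A \longrightarrow \bigoplus_{\text{odd }i=1}^{k} e_{x_i}A
\]
whose entries are left multiplications by the shortest paths (with $x_0$, $x_{k+1}$ the two $W$-edges), and checks via the Loewy structure that $\operatorname{coker}\phi\cong\ker\phi$, so $\operatorname{coker}\phi$ is a periodic module and $\mathrm{gldim}\,A=\infty$. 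To repair your argument you would need either this kind of explicit construction, or a genuine syzygy computation of $\Omega_A^\bullet S_{(x|y)}$ showing the resolution never terminates --- the Ext-pattern over $B$ alone cannot do it.
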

\begin{proof}
Let $u,v$ be the associating vertices of $(x|y),(w|z)\in W$ respectively.
There is a walk $(u=v_0,x_1,v_1,x_2,\ldots,x_k,v_k=v)$ on $\ul{G}$ connecting the vertices $u,v$.
Define $x_0:=x$ and $x_{k+1}:=w$.
By assumption $k$ is odd, so there is a map
\[
\phi:\bigoplus_{\text{even }i=0}^{k+1} e_{x_i}A \longrightarrow  \bigoplus_{\text{odd }i=1}^{k} e_{x_i}A 
\]
where the entry $\phi_{a,b}:e_{x_a}A\to e_{x_b}A$ of $\phi$ is given by zero if $|a-b|>1$, or by left-multiplying the shortest path from $x_b$ to $x_a$ otherwise.
One can then check that the indecomposable $A$-module coker $\phi$ is isomorphic to $\ker \phi$, for example, by using its Loewy structure:
\[
\xymatrix@C=4pt@R=6pt{
 & x_1 \ar@{.}[ldd]\ar@{.}[rdd] && x_3\ar@{.}[ldd] &       & x_k\ar@{.}[rdd] & \\
 &    &&     & \cdots& & \\
x_k & & x_2  &&&& x_{k+1},
}
\]
which gives the syzygy
\[
\xymatrix@C=4pt@R=6pt{
(x|y)\ar@{.}[rdd]& & x_2 \ar@{.}[ldd]\ar@{.}[rdd] && x_4\ar@{.}[ldd] &       & x_{k-1}\ar@{.}[rdd] & &(w|z)\ar@{.}[ldd]\\
& &    &&     & \cdots& & &\\
&x_1 & & x_3  &&&& x_k &
}.
\]
This means that the module $\mathrm{coker}\phi$ is periodic, and so the algebra $A$ has infinite global dimension.
\end{proof}

\begin{lemma}\label{lem-gs-Naka-gldim}
Let $A$ be a gendo-symmetric Nakayama algebra with $n+r$ isomorphism classes of simple modules, $n$ isomorphism classes of projective-injective modules, and Loewy length $(n+r)m+1$.
Then $A$ has finite global dimension if and only if $r=1$ and $m=1$.
Moreover, in such a case, the global dimension is exactly $2n$ and equals the Gorenstein and dominant dimension of $A$.
\end{lemma}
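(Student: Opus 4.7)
The plan is to invoke Corollary \ref{nakchara} to realise $A$ as a cyclic Nakayama algebra whose Kupisch series has $n$ ``good'' entries equal to $Nm+1$ and $r$ ``bad'' entries equal to $Nm$, where $N := n+r$. Since $A$ is Iwanaga--Gorenstein by Proposition \ref{prop-gor-dom-dim}, if $\mathrm{gldim}\,A$ is finite then it coincides with the Gorenstein dimension; moreover, for $r=1$ the set $\omega$ in Proposition \ref{prop-gor-dom-dim} contains a single element $i$ with $\overline{\eta(i)-i}=2N$, so both the Gorenstein and the dominant dimensions equal $2N-2 = 2n$. Thus the main task is to show that $\mathrm{gldim}\,A$ is finite precisely when $r = m = 1$, and that in that case $\mathrm{gldim}\,A = 2n$.

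I encode each indecomposable non-projective Nakayama module as a pair $(j,\ell)$ specifying its top $S_j$ and Loewy length $\ell$, so that the syzygy operator acts by $(j,\ell)\mapsto (j+\ell,\,c_j-\ell)$ whenever the right-hand side has positive length. Two base observations drive the computation. First, if both positions $i$ and $i+1$ are good, the projective cover $P_{i+1}$ of $\Omega S_i = (i+1, Nm)$ has only one more composition factor, so $\Omega^2 S_i \cong S_{i+1}$. Second, if $i$ is good and $i+1$ is bad, then $P_{i+1}$ and $\rad P_i$ have the same length, giving $\rad P_i \cong P_{i+1}$ and hence $\mathrm{pd}\,S_i = 1$. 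Iterating the first observation backwards from any bad position immediately yields that the good simples have projective dimensions $1,3,5,\ldots$ matching their cyclic distances to the nearest bad position.

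For a chosen bad position $y$, a routine induction on $k$ gives $\Omega^{2k}S_y \cong (y, k+1)$, as long as the forward walk $y+1, y+2, \ldots, y+k$ remains in good territory. If $r = m = 1$, the unique bad position is $y$; the induction runs up to $k = n$, at which point $(y, n+1) = P_y$ is projective, so $\mathrm{pd}\,S_y = 2n$ and hence $\mathrm{gldim}\,A = 2n$, matching the Gorenstein and dominant dimensions. If $r = 1$ but $m \geq 2$, the induction continues past $k = N-1$ with the forward walk wrapping back to $y$; a careful syzygy step using the short projective cover $P_y$ of length $Nm$ then shows that $(y, N)$ appears in the resolution of $S_y$, and since $N < Nm$ this module is non-projective and satisfies $\Omega^2(y, N)\cong (y, N)$, yielding an infinite resolution. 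If $r \geq 2$ (for any $m$), let $d$ be the least positive integer such that $y+d$ is also bad, so $1 \leq d \leq N-1$; the induction produces $(y, d) \cong \Omega^{2(d-1)}S_y$, and the next two syzygy steps, now using the short projective cover $P_{y+d}$ of length $Nm$, return to $(y, d)$, which is non-projective because $d \leq N-1 < Nm = c_y$.

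The main technical difficulty is the length-bookkeeping in the syzygy recursion: the step from $\Omega^{2k}S_y$ to $\Omega^{2(k+1)}S_y$ adds $+1$ to the length when the intermediate projective cover has length $Nm+1$ (good position) but adds $0$ when it has length $Nm$ (bad position), and the top of each syzygy must be tracked modulo $N$ as the forward walk wraps around. Once this is handled uniformly, the dichotomy between termination at a projective and landing at an $\Omega^2$-fixed non-projective module is exactly what distinguishes $r = m = 1$ from the other cases and pins down the global dimension formula.
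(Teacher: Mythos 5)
Your proposal is correct and follows essentially the same route as the paper: a direct syzygy computation in the cyclic Nakayama algebra showing the simples at "good" positions have small odd projective dimension and the "bad" simple has projective dimension $2n$ when $r=m=1$ (with the Gorenstein and dominant dimensions read off from Proposition \ref{prop-gor-dom-dim}), and exhibiting $\Omega^2$-periodic modules to rule out finite global dimension otherwise. The only differences are cosmetic: you split the infinite cases as ($r=1,\,m\geq 2$) and ($r\geq 2$, any $m$) and obtain the periodic module as an explicit syzygy of the bad simple, whereas the paper splits as ($r>1,\,m=1$) and ($m\geq 2$) and writes the periodic modules directly as radical powers of projectives.
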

\begin{proof}
\underline{$\Leftarrow$}:  Without loss of generality, the Kupisch series of $A$ is $[n+1,n+2,n+2,\ldots,n+2,n+2]$ for some $n \geq 1$.

Note that $\Omega^{1}(S_n) \cong \rad(e_nA)  \cong e_0 A$, and so $S_n$ has projective dimension 1.
For $i \neq n$ and $i \neq 0$, since $e_iA$ and $e_{i+1}A$ are both projective-injective, it is easy to see that $\Omega^{2}(S_i) \cong S_{i+1}$. This gives $\mathrm{pdim}(S_i)=2+\mathrm{pdim}(S_{i+1})$.
This shows that for $i \neq 0$, $\mathrm{pdim}(S_i)\leq \mathrm{pdim}(S_1)=2n-1$.

For the syzygies of $S_0$, it is not difficult to check that
\begin{align}
\Omega^{1}(S_0)&=\rad(e_0A), \notag \\
\Omega^{2i}(S_0)&=\rad^{n-(i-1)}(e_iA) \quad \mbox{for }i\in\{1,2,\ldots,n-1\},\notag \\
\Omega^{2i+1}(S_0)&=\rad^i(e_0A) \quad \mbox{for }k\in\{1,2,\ldots,n-1\}.\notag
\end{align}
This gives $\mathrm{pdim}(S_0)=2n$, which is also the Gorenstein and dominant dimension of $A$ by Proposition \ref{prop-gor-dom-dim}.

\underline{$\Rightarrow$}:
Suppose $r>1$ and $m=1$.
Assume without loss of generality that $e_0A$ and $e_kA$ have Lowey length $n+r$ for some $k\neq 0$.
Consider the module $M=\rad^{n+r-k}(e_kA)$, then we have $\Omega(M)\cong\rad^{k}(e_{0}A)$ and $\Omega^2(M)\cong\rad^{n+r-k}(e_kA)=M$.

For $m \geq 2$, take $M=\rad^{n+r}(e_0A)$ and assume that $e_0A$ has Loewy length $(n+r)m$, then a similar calculation shows that $\Omega^2(M)\cong M$.

We now have an $A$-module $M$ with periodic minimal projective resolution, which implies that $A$ has infinite global dimension.
\end{proof}

Recall from \cite{Iya1,Iya2} that an algebra $A$ is a \emph{$(d-1)$-Auslander algebra} for some integer $d\geq 2$ if $\mathrm{gldim}A \leq d \leq \mathrm{domdim} A$.

\begin{proposition}\label{highaus}
Let $A$ be a special gendo-Brauer tree algebra associated to $(\ul{G},\ul{m},W)$.
Then $A$ has finite global dimension if and only if $\ul{m}\equiv 1$ and $|W|=1$.
Moreover, in such a case, $A$ is a $(2n-1)$-Auslander algebra when it has $n+1$ simple modules.
\end{proposition}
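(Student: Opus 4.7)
The plan is to reduce the question to the Nakayama case via the classification in Theorem~\ref{thm-nuder-naka}, and then quote Lemma~\ref{lem-gs-Naka-gldim}.

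First I would handle the easy necessary direction. By Lemma~\ref{lem-gldim-bad}, if $W$ is not of pure parity, then $A$ has a periodic module and hence $\mathrm{gldim}\,A = \infty$. Thus finite global dimension forces $W$ to have pure parity. Under this hypothesis, Theorem~\ref{thm-nuder-naka} guarantees the existence of an almost $\nu$-stable derived equivalence between $A$ and some gendo-symmetric Nakayama algebra $A'$. Since almost $\nu$-stable derived equivalences preserve both global and dominant dimensions by Theorem~\ref{nuder}\,(2), the question of finiteness of $\mathrm{gldim}\,A$ is now equivalent to the same question for $A'$.

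Next I would carefully track the combinatorial invariants. Writing $n := |E(\ul{G})|$, the Brauer tree algebra $B = \Lambda_{\ul{G},\ul{m}}$ has $n$ simple modules, and $A \cong \End_B(B \oplus M)$ has $n + |W|$ simples with $n$ indecomposable projective-injectives. Under the derived equivalence furnished by Theorem~\ref{nustable}, $B$ corresponds to a symmetric Nakayama algebra $B'$ with the same number of simples and the same exceptional multiplicity, and the generator $M$ corresponds to a basic generator $M'$ with $|M'| = |W|$. Hence $A'$, in the notation of Lemma~\ref{lem-gs-Naka-gldim}, is a gendo-symmetric Nakayama algebra with parameters $(n, r, m) = (n, |W|, m)$, where $m$ denotes the exceptional multiplicity of $\ul{m}$.

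Now Lemma~\ref{lem-gs-Naka-gldim} tells us that $A'$ has finite global dimension if and only if $r = 1$ and $m = 1$, that is, $|W| = 1$ and $\ul{m} \equiv 1$, proving the equivalence in the proposition. In this case the same lemma gives $\mathrm{gldim}\,A' = \mathrm{domdim}\,A' = 2n$. Applying Theorem~\ref{nuder}\,(2) once more in the opposite direction, we transport these equalities back to obtain $\mathrm{gldim}\,A = \mathrm{domdim}\,A = 2n$, so the inequality $\mathrm{gldim}\,A \le 2n \le \mathrm{domdim}\,A$ is (trivially) satisfied and $A$ is a $(2n-1)$-Auslander algebra by the definition of \cite{Iya1,Iya2}. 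There is no real technical obstacle in this strategy; the only thing demanding care is the bookkeeping that ensures the parameters $(n, |W|, m)$ of $A$ correctly match the parameters $(n, r, m)$ in Lemma~\ref{lem-gs-Naka-gldim} after the derived equivalence.
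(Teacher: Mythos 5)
Your proposal is correct and follows essentially the same route as the paper: Lemma \ref{lem-gldim-bad} for the non-pure-parity case, reduction to a gendo-symmetric Nakayama algebra via Theorem \ref{thm-nuder-naka} together with the invariance of global and dominant dimension under almost $\nu$-stable derived equivalence (Theorem \ref{nuder}(2)), and then Lemma \ref{lem-gs-Naka-gldim}. The only cosmetic difference is in the final statement, where the paper reads off $\mathrm{gldim}\,A=\mathrm{domdim}\,A=2n$ directly from Proposition \ref{prop-gor-dom-dim} instead of transporting these equalities back from the Nakayama representative, but both versions are immediate and your parameter bookkeeping $(n,|W|,m)$ is accurate.
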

\begin{proof}
The equivalence comes from combining Lemma \ref{lem-gldim-bad} and Lemma \ref{lem-gs-Naka-gldim} with Theorem \ref{nuder} (2).

When the condition is satisfied, Proposition \ref{prop-gor-dom-dim} implies that the Gorenstein dimension and the dominant dimension of $A$ are both equal to $2n$, meaning that $A$ is a $(2n-1)$-Auslander algebra.
\end{proof}

\section{Examples}
\label{sec-example}
In the first part of this section, we review a particular subclass of special gendo-Brauer tree algebras which appears in many areas related to representation theory.
In the second part, we use the homological dimensions formulae obtained in Section \ref{sec-hom-property} to determine some classes of modules.
These modules are central to higher Auslander-Reiten, cluster tilting, and related theories in \cite{Iya1,Iya2,CheKoe}.
It turns out that the endomorphism rings of these modules are special gendo-Brauer tree algebras.

\subsection{A special gendo-Brauer line algebra}\label{subsec-line}
A Brauer tree is called a \emph{Brauer line} if it is given by a (planar) graphical line with multiplicity $\ul{m}\equiv 1$.
For any integer $k\geq 2$, denote by $B=B_{k-1}$ the basic Brauer tree algebra with $k-1$ isomorphism classes of simple modules, associated to a ($k-1$-edges) Brauer line.
Let $S$ be a simple module whose projective cover is uniserial.
Then the algebra $\Gamma_k:=\End_B(B \oplus S)$ can be described as follows by the quiver
\[
\xymatrix@C=25pt{1 \ar@/^1pc/ [r]^{a_1} & 2 \ar@/^1pc/ [r]^{a_2} \ar@/^1pc/ [l]^{b_1}  & 3 \ar@/^1pc/ [r]^{a_3} \ar@/^1pc/ [l]^{b_2}& 4 \ar@/^1pc/ [l]^{b_3} \ar@{.}[r] & k-1 \ar@/^1pc/ [r]^{a_{k-1}} & k \ar@/^1pc/ [l]^{b_{k-1}}}
\]
with relations generated by
$b_{k-1} a_{k-1}$, $b_{i-1}a_{i-1}-a_i b_i$, $a_{i-1}a_i$, $b_i b_{i-1}$.

This family of algebras appears frequently in various areas of representation theory.
We give a (possibly incomplete) list of examples here.
\begin{itemize}
\item Every representation-finite block of Schur algebras, see for example \cite{Kue}.

\item The principal block of the BGG category $\mathcal{O}$ of $\mathfrak{sl}_2(\mathbb{C})$ is equivalent to $\mod \Gamma_2$, see \cite{Str}.

\item Let $\mathfrak{p}$ be the parabolic subalgebra of $\mathfrak{gl}_n(\mathbb{C})$ given by the sum of its Borel subalgebra and Levi subalgebra $\mathfrak{gl}_{n-1}(\mathbb{C})\oplus  \mathfrak{gl}_1(\mathbb{C})$.
The principal block of the parabolic BGG category associated to the pair $(\mathfrak{gl}_n(\mathbb{C}),\mathfrak{p})$ is  equivalent to $\mod \Gamma_n$, see \cite{BruStr}.

\item Certain blocks of the category $\mathcal{O}$ of a Virasoro algebra, see \cite{BNW}.

\item Non-semi-simple blocks of Temperley-Lieb algebras \cite{Wes}, and of partition algebras \cite{Mar}, over a field of characteristic zero.

\item The underlying (ungraded) algebra of the Fukaya category associated to the Milnor fibre of Kleinian singularity of type $A_n$ is $\Gamma_n$ \cite{Sei}.

\item The quadratic dual of the (classical) Auslander algebra of $K[X]/(X^n)$ is $\Gamma_n$.
\end{itemize}

We remark that representation-finite blocks of Schur algebras being higher Auslander algebras was noted and used in \cite{Ma2} to give a new homological description of their quasi-hereditary structure using dominant dimension.
The following result follows from Proposition \ref{highaus} immediately.
\begin{proposition}
All representation-finite blocks of Schur algebras are special gendo-Brauer tree algebras of pure parity.
Moreover, they are higher Auslander algebras.
\end{proposition}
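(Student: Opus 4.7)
The plan is to reduce the statement to an application of Theorem \ref{thm-special-gBT} together with Proposition \ref{highaus}. The key is that the algebras $\Gamma_k$ described at the start of Subsection \ref{subsec-line} are explicitly of the form $\End_B(B \oplus S)$, where $B = B_{k-1}$ is the Brauer line algebra (in particular, a Brauer tree algebra associated to the planar line $\ul{G}$ with $\ul{m}\equiv 1$) and $S$ is a simple $B$-module whose projective cover is uniserial. The result of K\"ulshammer \cite{Kue} (cited in the bullet list) identifies every representation-finite block of a Schur algebra with some $\Gamma_k$, and this is the only external fact required.

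First I would invoke Proposition \ref{hookmod} to observe that $S$, being a simple module with uniserial projective cover, is a hook module over $B$; by Proposition \ref{hookmod} it thus has the form $M(x,y) = (x \to y)B$ for a unique $(x|y) \in H_{\ul{G}}$. Taking $M := S$ and $W := \{(x|y)\}$, the set $W$ has a single element and is therefore, vacuously, a special subset of $H_{\ul{G}}$ of pure parity (there is only one associating vertex to consider). The generator $B \oplus M$ then has a single non-projective indecomposable summand, so the pairwise stable orthogonality condition in \hyperref[thmA]{Theorem A}(4) is trivially satisfied. Applying Theorem \ref{thm-special-gBT}, we conclude that $\Gamma_k \cong \End_B(B \oplus M)$ is a special gendo-Brauer tree algebra, and by construction it has pure parity.

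For the second assertion, I would simply apply Proposition \ref{highaus} to the data $(\ul{G},\ul{m},W)$: here $\ul{m} \equiv 1$ because $B$ is a Brauer line, and $|W| = 1$ by construction, so both hypotheses of that proposition are met. Hence $\Gamma_k$ has finite global dimension and is a $(2(k-1)-1)$-Auslander algebra (matching the indexing convention ``$n+1$ simple modules'' for $n = k-1$), which is precisely the definition of a higher Auslander algebra in the sense of \cite{Iya1,Iya2}.

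I do not anticipate a real obstacle, since the combinatorial input needed for \hyperref[thmA]{Theorem A} and Proposition \ref{highaus} is already packaged into the single-edge-leaf datum $W$. The only subtle point is to make sure the module $S$ cited from \cite{Kue} is indeed the same simple module described combinatorially as the one with uniserial projective cover; this is straightforward because the other two simple modules over a non-end vertex of the Brauer line have biserial (length-three) projective cover, whereas the simple at a leaf has uniserial projective cover. Once this identification is made, the rest of the proof is a direct citation of results already established in the paper.
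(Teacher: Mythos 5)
Your proposal is correct and follows essentially the same route as the paper: the paper identifies the representation-finite blocks with the algebras $\Gamma_k=\End_{B_{k-1}}(B_{k-1}\oplus S)$ via \cite{Kue} in the setup of Subsection \ref{subsec-line} and then states that the proposition "follows from Proposition \ref{highaus} immediately." Your write-up merely makes explicit the intermediate steps (hook module via Proposition \ref{hookmod}, $|W|=1$ gives speciality and pure parity, Theorem \ref{thm-special-gBT}, then Proposition \ref{highaus} with $\ul{m}\equiv 1$), all of which are accurate, including the $(2(k-1)-1)$-Auslander index.
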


\subsection{$m$-rigid and $m$-ortho-symmetric modules}\label{subsec-mrigid}
Throughout this subsection, $B$ is a basic finite dimensional algebra and $M$ is a basic $B$-module.

Let $m$ be a positive integer.
Denote by ${}^{\perp_m}M$ and $M^{\perp_m}$ the full subcategories of $\mod B$ with objects given respectively by
\begin{align}
& \{X\in \mod B\;|\; \Ext_B^i(X,M)=0 \mbox{ for all }1\leq i\leq m \} \notag \\
\mbox{and } & \{X\in \mod B\;|\; \Ext_B^i(M,X)=0 \mbox{ for all }1\leq i\leq m \}. \notag
\end{align}

Following \cite{Iya1,CheKoe}, we define the following notions for $M$.
\begin{definition}
Let $m$ be a positive integer.
\begin{itemize}
\item $M$ is \emph{$m$-rigid} if $\Ext_B^{i}(M,M)=0$ for all $1\leq i\leq m$, i.e. $M\in {}^{\perp_m}M$ (or $M\in M^{\perp_m}$).
\item $M$ is \emph{maximal $m$-rigid} if $M$ is $m$-rigid and for any $N\in \mod B$, $M\oplus N$ being $m$-rigid implies that $N\in \add(M)$.
\item $M$ is \emph{$m$-ortho-symmetric} if $M$ is a $m$-rigid generator-cogenerator with ${}^{\perp_m}M=M^{\perp_m}$.
\item $M$ is \emph{maximal $m$-orthogonal} if ${}^{\perp_m}M=\add(M)=M^{\perp_m}$.
\end{itemize}
\end{definition}
We remark that any maximal $m$-orthogonal module is also $m$-ortho-symmetric; but the converse is not true, see \cite[Sec 5.3]{CheKoe} for the details.

Note that maximal $m$-orthogonal modules are also called $(m+1)$-cluster tilting modules in \cite{Iya2}, and $m$-ortho-symmetric modules are also called $(m+1)$-precluster tilting modules in a forthcoming work of Iyama and Solberg.

Our aim is to determine which $m$-rigid generators and $m$-ortho-symmetric modules of a Brauer tree algebra contain a special direct sum of hook modules as a direct summand.

\begin{proposition}\label{rigid}
Let $B$ be a basic Brauer tree algebra, and $M$ be a basic $B$-module without projective summand.
\begin{enumerate}[label={\upshape(\arabic*)}]
\item If $B$ is non-local, then $B\oplus M$ being a basic $m$-rigid generator for some integer $m \geq 2$ is equivalent to $A=\End_B(B\oplus M)$ being a special gendo-Brauer tree algebra with $\mathrm{domdim}(A)\geq m+2$.

\item If $B$ is local, then none of the indecomposable non-projective $B$-modules is $m$-rigid for any $m\geq 1$.
\end{enumerate}
In particular, the endomorphism algebra of any maximal $m$-rigid $B$-module is Iwanaga-Gorenstein.
\end{proposition}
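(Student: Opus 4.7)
The plan is to combine M\"uller's theorem with the structural characterization given by Theorem \ref{thm-special-gBT}. M\"uller's classical result states that for a generator $N$ over a self-injective algebra $B$, one has $\mathrm{domdim}(\End_B(N)) \geq m+2$ if and only if $\Ext^i_B(N,N) = 0$ for $1 \leq i \leq m$. Since Brauer tree algebras are symmetric and $B$ is a summand of $B \oplus M$, this immediately makes the $m$-rigidity of $B \oplus M$ equivalent to the dominant dimension bound $\mathrm{domdim}(\End_B(B \oplus M)) \geq m + 2$, so the core of Part (1) is to prove that $A = \End_B(B \oplus M)$ is a special gendo-Brauer tree algebra precisely when $M$ is $m$-rigid (with $m \geq 2$).

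The direction ($\Leftarrow$) of Part (1) is immediate from the definition of a special gendo-Brauer tree algebra: $M$ is by construction a direct sum of pairwise stably orthogonal hook modules. For ($\Rightarrow$), I would proceed in two steps. First, $2$-rigidity of each indecomposable non-projective summand $X$ of $M$ forces $X$ to be a hook module: $\Ext^2_B(X,X) = 0$ can be transported to a symmetric Nakayama algebra through a stable equivalence of Morita type (which preserves $\Ext^i$ for $i \geq 1$ and the stable AR structure), where Proposition \ref{extnak} identifies such $X$ with the $\Omega$-orbit of simples or radicals of uniserial projectives, which are exactly the hook modules by Proposition \ref{hookmod}. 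Second, the $1$-rigidity condition $\Ext^1_B(M_s,M_t) = \ul{\Hom}_B(\Omega M_s, M_t) = 0$, together with the observation (recalled in the paper) that for hook modules $X, Y$ one has $\ul{\Hom}_B(X,Y) \neq 0$ iff $Y = X$ or $Y \cong \Omega X$, forces $M_t \notin \{\Omega M_s, \Omega^2 M_s\}$ for all summand indices $s, t$. The first exclusion, $\Omega(\add M) \cap \add M = \emptyset$, is exactly stable orthogonality of the hook summands. Theorem \ref{thm-special-gBT} then identifies $A$ as a special gendo-Brauer tree algebra, and M\"uller's theorem gives the dominant dimension bound.

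Part (2) is a direct computation. For $B = K[X]/(X^d)$, the indecomposable non-projective modules are $N_k = K[X]/(X^k)$ for $1 \leq k \leq d-1$, with $\Omega N_k \cong N_{d-k}$. To show no $N_k$ is $1$-rigid, compute $\Ext^1_B(N_k, N_k) \cong \ul{\Hom}_B(N_{d-k}, N_k)$ and check by direct inspection that the natural non-zero homomorphism (projection onto the top, or inclusion of the socle, depending on dimensions) does not factor through the unique indecomposable projective $B$, so the stable Hom is non-zero. For the ``in particular'' statement (with $m \geq 2$), let $N$ be a maximal $m$-rigid $B$-module; maximality forces $B \in \add N$ since $B \oplus N$ is trivially $m$-rigid, so write $N = B \oplus M$ with $M$ projective-free. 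In the non-local case, Part (1) identifies $\End_B(N)$ as a special gendo-Brauer tree algebra, which is Iwanaga-Gorenstein by Proposition \ref{prop-gor-dom-dim}. In the local case, Part (2) forces $M = 0$, so $\End_B(N) \cong B$ is symmetric, hence $0$-Iwanaga-Gorenstein.

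The main technical obstacle is Step 2 of Part (1)---translating the Ext-category condition $\Ext^1 = 0$ into the Hom-in-stable-category condition that defines stable orthogonality. These are a priori distinct conditions on the $\Omega$-orbit of hook modules: stable orthogonality excludes only $\Omega$-neighbors, whereas $\Ext^1 = 0$ excludes both $\Omega$-neighbors and $\Omega^2$-neighbors. The key observation making the argument work is that the stronger $\Ext^1$-vanishing automatically contains the stable orthogonality exclusion as one of its two constituent conditions, so $1$-rigidity on a direct sum of hook modules strengthens stable orthogonality rather than weakening it. Care is needed in handling the diagonal case $X = Y$ in the Hom-characterization (where $\ul{\End}(X) \neq 0$ for indecomposable non-projective $X$), as well as in justifying the transport of Proposition \ref{extnak} from symmetric Nakayama to arbitrary Brauer tree algebras via stable equivalence of Morita type.
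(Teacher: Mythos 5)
Your proposal is correct and follows essentially the same route as the paper: M\"uller's theorem handles the dominant dimension equivalence, $2$-rigidity forces hook summands via Propositions \ref{extnak} and \ref{hookmod}, $1$-rigidity rules out $\Omega$-adjacent summands (the paper does this with the single computation $\Ext^1(N,\Omega N)\cong\ul{\Hom}(\Omega N,\Omega N)\neq 0$, you with the full hammock description of stable maps between hook modules), and Theorem \ref{thm-special-gBT} together with Proposition \ref{prop-gor-dom-dim} finishes the argument. Your hands-on computation in $K[X]/(X^d)$ for part (2) and the explicit maximality argument for the final claim are just unwound versions of the paper's hammock and Morita-type remarks, so there is nothing substantive to change.
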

\begin{proof}
(1) First recall a result of Mueller from \cite[Lem 3]{Mue}.
It says that for any finite dimensional algebra $A$, an $A$-module $N$ is a $m$-rigid generator-cogenerator if and only if $\mathrm{domdim}(\End_A(N))\geq m+2$.
Therefore, by Proposition \ref{prop-gor-dom-dim}, it remains to show that $M$ is a special direct sum of hook modules whenever $M$ is $m$-rigid for some $m\geq 2$.

Since $\Ext^{2}(M,M)=0$, every indecomposable non-projective summand $N$ of $M$ also satisfies $\Ext^{2}(N,N)=0$.
It follows from Proposition \ref{hookmod} and Proposition \ref{extnak} (1) that $N$ is a hook module or an indecomposable projective module.

By Theorem \ref{thm-special-gBT}, it remains to show that the non-projective summand of $M$ must be a special direct sum (of hook modules).
Note that this property is equivalent to saying that $N\oplus \Omega(N)$ is not a direct summand of $M$ for any hook module $N$.
It follows from the isomorphism $\Ext^1(N,\Omega(N))\cong \ul{\Hom}(\Omega(N),\Omega(N))\ncong 0$ (as the later vector space always contains the identity morphism) that $\Ext^1(N\oplus\Omega(N),N\oplus \Omega(N))\ncong 0$.
So having a direct summand of the form $N\oplus \Omega(N)$ for some hook module $N$ will contradict the $m$-rigidity of $M$, which proves our claim.

(2) This follows immediately from the isomorphism $\Ext^1(N,N)\cong \ul{\Hom}(\Omega(N),N)$ and the hammock formulae (\ref{eq-Omega}).

The last statement is trivial in the local case, as (2) implies that maximal $m$-rigid modules are progenerators; whereas the non-local case follows by combining (1) with Proposition \ref{prop-gor-dom-dim}.
\end{proof}

We expect a more general construction of Iwanaga-Gorenstein gendo-symmetric algebras by taking the endomorphism ring of a maximal $m$-rigid module with $m\geq 2$.
Using various results obtained in this paper as a guide, we will investigate in a separate paper such kind of endomorphism rings, as well as their Gorenstein projective modules and singularity categories, in the case when the symmetric algebra is representation-finite.

\medskip

To obtain $m$-ortho-symmetric modules, we use an equivalent condition which allows us to focus only on the homological dimensions of the induced gendo-symmetric algebras.

It was shown in \cite[Cor 3.18]{CheKoe} that for a non-projective $m$-rigid generator-cogenerator $M$,   ${}^{\perp_m}M=M^{\perp_m}$ is equivalent to $\End_B(M)$ being $(m+2)$-Iwanaga-Gorenstein.
This allows us to ``unify" the definition with the case of $m=0$ analogous to \cite[Def 3.19]{CheKoe} as follows.
As mentioned in the proof of Proposition \ref{rigid}, $M$ being $m$-rigid generator-cogenerator is equivalent to $\mathrm{domdim}(\End_B(M))\geq m+2$.
Note also that the dominant dimension of a non-self-injective algebra cannot be greater than the injective dimension of the regular representation (i.e. the Gorenstein dimension).
In particular, a non-projective basic generator-cogenerator $M$ is $m$-ortho-symmetric, for an integer $m\geq 0$, if and only if the dominant dimension and the Gorenstein dimension of $\End_B(M)$ are both equal to $(m+2)$.

Using this characterisation by homological dimensions of the endomorphism algebra, we can apply Proposition \ref{prop-gor-dom-dim} to classify all $m$-ortho-symmetric modules over a Brauer tree algebra for any $m\geq 2$ (as well as some of the $0$- and $1$-ortho-symmetric modules).
\begin{proposition}
Let $B$ be a basic Brauer tree algebra with $n$ isomorphism classes of simple modules.
For an integer $m\geq 0$ such that $m+2$ divides $2n$, there exist $m$-ortho-symmetric $B$-modules given by \begin{align}\label{eq-orthosym}
B\oplus \bigoplus_{i=1}^{2n/(m+2)}\Omega^{i(m+2)}(M),
\end{align}
for any hook module $M$.
In particular, in the case when $m\geq 2$, there exists non-projective $m$-ortho-symmetric $B$-modules if and only if $m+2$ divides $2n$.
Moreover, such an $m$-ortho-symmetric module must be of the form (\ref{eq-orthosym}).
\end{proposition}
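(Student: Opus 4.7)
The plan is to translate the $m$-ortho-symmetry condition into a purely homological condition on $A := \End_B(M)$ and then apply the Green's walk machinery. As established in the paragraph preceding the proposition, for a non-projective basic generator-cogenerator $M$, being $m$-ortho-symmetric is equivalent to $A$ having both dominant dimension and Gorenstein dimension equal to $m+2$. Since $B$ is symmetric, any module containing $B$ as a summand is automatically a generator-cogenerator, so both claims reduce to analysing which modules $B \oplus N$ with $N \neq 0$ produce such an $A$.

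For existence, fix $r := 2n/(m+2)$, choose any hook module $M$, and set
\[ M_W := B \oplus \bigoplus_{i=1}^{r} \Omega^{i(m+2)}(M). \]
All hook modules of $B$ lie in a single $\Omega$-orbit of length $|H_{\ul{G}}| = 2n$ (by Proposition \ref{hookmod} together with the description of $H_{\ul{G}}$), so the $r$ non-projective summands are pairwise non-isomorphic and no two are $\Omega$-related: the congruence $(i-j)(m+2) \equiv \pm 1 \pmod{2n}$ has no solution since $\gcd(m+2, 2n) = m+2 \geq 2$. By the hammock argument used in the proof of Theorem \ref{thm-special-gBT}, pairwise stable orthogonality then follows, and hence $A \cong \Gamma_{\ul{G},\ul{m}}^W$ for a special subset $W \subseteq H_{\ul{G}}$. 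Lemma \ref{lem-GW-resol} says that consecutive positions in the Green's walk on $\ul{G}$ correspond to hook modules differing by $\Omega^{-1}$, so the $W$-positions are evenly spaced with gap $m+2$ there; because each of the $r$ leaf insertions enlarging $\ul{G}$ to $\ul{G}^W$ adds exactly two consecutive entries (the new leaf traversed in both directions), the set $\omega$ is evenly spaced with gap $m+4$ on the walk of $\ul{G}^W$. Applying Proposition \ref{prop-gor-dom-dim} yields both dimensions of $A$ equal to $(m+4) - 2 = m+2$.

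For the converse when $m \geq 2$, suppose $M_W = B \oplus N$ with $N \neq 0$ is $m$-ortho-symmetric, so $M_W$ is in particular $m$-rigid. Proposition \ref{rigid}(2) rules out the local case, and Proposition \ref{rigid}(1) yields $A := \End_B(M_W) \cong \Gamma_{\ul{G},\ul{m}}^W$ for the special $W \subseteq H_{\ul{G}}$ recording the indecomposable summands of $N$. The dimension equalities combined with Proposition \ref{prop-gor-dom-dim} force $\overline{\eta(i) - i} = m + 4$ for every $i \in \omega$; summing these constant gaps around the closed Green's walk of $\ul{G}^W$ gives $|W|(m+4) = 2(n + |W|)$, whence $(m+2) \mid 2n$ and $|W| = r$. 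Reversing the bookkeeping used in the existence step, uniform gap $m+4$ in the walk of $\ul{G}^W$ translates into uniform gap $m+2$ in the walk of $\ul{G}$, and Lemma \ref{lem-GW-resol} identifies the $W$-hook modules with a single $\Omega^{-(m+2)}$-suborbit of size $r$, namely $\{\Omega^{i(m+2)}(M)\}_{i=1}^{r}$ for some hook module $M$.

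The main obstacle is the combinatorial bookkeeping converting between the Green's walks of $\ul{G}$ and $\ul{G}^W$: one must verify carefully that each $W$-insertion contributes exactly two consecutive entries at the insertion point, so that gap-$(m+4)$ spacing on $\ul{G}^W$ is in clean bijection with gap-$(m+2)$ spacing on $\ul{G}$. Once this bijection is secured, both the divisibility $(m+2)\mid 2n$ and the explicit $\Omega^{m+2}$-orbit form of the non-projective summands follow directly from Proposition \ref{prop-gor-dom-dim} and Lemma \ref{lem-GW-resol}.
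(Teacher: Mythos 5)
Your proposal is correct and follows essentially the same route as the paper: it reduces $m$-ortho-symmetry to the equality $\mathrm{domdim}=\mathrm{Gorenstein\ dim}=m+2$ of the endomorphism ring, applies the Green's-walk formulae of Proposition \ref{prop-gor-dom-dim} (via Lemmas \ref{special-gBT} and \ref{lem-GW-resol}) to characterise when all gaps are equal, and uses Proposition \ref{rigid} to reduce the $m\geq 2$ converse to special gendo-Brauer tree algebras. The only difference is cosmetic bookkeeping: you keep the inserted $W$-entries and track gaps of $m+4$ on the walk of $\ul{G}^W$, whereas the paper deletes them and counts subsequences of length $m+2$ in the walk of $\ul{G}$.
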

\begin{proof}
Let $A=\Gamma_{\ul{G},\ul{m}}^W$ be a basic special gendo-Brauer tree algebra, and $B$ be the Brauer tree algebra $\Lambda_{\ul{G},\ul{m}}$ with a special direct sum $N$ of hook modules given by $W$.
Consider the clockwise Green's walk of the enlarged tree $\ul{G}^W$.
Cyclically permute the sequence of edges so that it starts and ends with the same new edge (elements in $W$).
If we remove all the new edges (elements in $W$) in this sequence of edges, then we obtain various subsequences, each of them consisting only of elements of $\ul{G}$.
The proof of Proposition \ref{prop-gor-dom-dim} essentially says that the length of each of these subsequences is the injective and dominant dimension of an indecomposable projective non-injective $A$-module.

In particular, $\mathrm{domdim}(A)=d=\mathrm{injdim}(A_A)$ if and only if we can obtain $2n/d$ subsequences in this manner, with all of them being of length $d$.
By Lemma \ref{special-gBT}, this combinatorial condition is equivalent to $N=\bigoplus_{i=1}^{2n/d}\Omega^{id}(M)$.
This proves the first statement.

It follows from Proposition \ref{rigid} that basic $m$-ortho-symmetric $B$-modules not isomorphic to $B$ must contain a special direct sum of hook modules as a direct summand for $m \geq 2$.
Combining this with the first part of the statement gives us the remaining part of the claim.
\end{proof}

Recall that for a positive integer $d$, a module $M$ is said to be \emph{$d$-periodic} if $\Omega^d(M)\cong M$.
In \cite[Cor 5.4]{CheKoe}, it was noted that for a symmetric algebra $B$ and a basic $B$-module $M$ without projective direct summand, $B \oplus M$ is $m-$ortho-symmetric if and only if $M$ is $(m+2)$-periodic and $m$-rigid.
Note that for the case when $m=0$, this simply means that $M$ is  2-periodic (a 2-periodic module $M$ is not $m$-rigid for any $m\geq 1$, since we  will have $\Ext^1(M,M)\cong D\underline{\Hom}(M,\tau(M)) \cong D\underline{\Hom}(M,M) \neq 0$ by the Auslander-Reiten formulas for symmetric algebras).

For $m\in\{0,1\}$ with $B$ being a Brauer tree algebra, we give examples of $m$-ortho-symmetric $B$-modules which are not of the form (\ref{eq-orthosym}).

\begin{example}
(1) \underline{$m=0$:} Take any indecomposable non-projective $B$-module $N$.
Then $N$ is $2n$-periodic where $n$ is the number of isomorphism classes of simple modules in $B$.
Therefore, the direct sum $M:=\bigoplus\limits_{k=0}^{2n-1}{\Omega^{k}(N)}$ (or the smaller module $M:=\bigoplus\limits_{k=0}^{n-1}{\tau^{k}(N)}$) is a 2-periodic module.
In particular, $B\oplus M$ is a $0$-ortho-symmetric $B$-module.

(2) \underline{$m=1$:} Take the symmetric Nakayama algebra $B$ with Kupisch series $[4,4,4]$.
Consider the module $M:=\soc(e_0B) \oplus \rad(e_1B) \oplus \rad^2(e_0B)\cong L(0,1)\oplus L(1,3)\oplus L(0,2)$.
Using the hammock picture, one can see that $\Omega(M)\cong \tau^{-1}(M)$.
In particular, $M$ is 1-rigid and 3-periodic, which means that $B\oplus M$ is a 1-ortho-symmetric module.
In fact, $B\oplus M$ is also maximal 1-orthogonal.
Note that the summand $\rad^2(e_0B)\cong L(0,2)$ is not a hook module.
\end{example}

\end{document}